\pgfmathsetmacro{\ec}{0.14} 
\pgfmathsetmacro{\cc}{1.5*\ec} 
\pgfmathsetmacro{\wc}{0.5*1.732*\ec} 
\newtheorem{thm}{Theorem}[section]
\newtheorem{lem}[thm]{Lemma}
\newtheorem{cor}[thm]{Corollary}
\newtheorem{prop}[thm]{Proposition}
\theoremstyle{definition}
\newtheorem{defn}[thm]{Definition}
\newtheorem{ex}[thm]{Example}
\theoremstyle{remark}
\newcommand{\R}{{\mathbb R}}
\newcommand{\Z}{{\mathbb Z}}
\newcommand{\N}{{\mathbb N}}
\renewcommand{\P}{{\mathbb P}} 
\renewcommand{\S}{{\mathbb S}} 
\newcommand{\cB}{{\mathcal B}}
\newcommand{\cC}{{\mathcal C}}
\newcommand{\cM}{{\mathcal M}}
\newcommand{\cR}{{\mathcal R}}
\newcommand{\cV}{{\mathcal V}}
\DeclareMathOperator{\cl}{cl}
\DeclareMathOperator{\Cl}{Cl}
\DeclareMathOperator{\exit}{ex}   
\DeclareMathOperator{\Ex}{Ex}   
\DeclareMathOperator{\Fix}{Fix}   
\DeclareMathOperator{\Sol}{Sol}   
\DeclareMathOperator{\Con}{Con}
\DeclareMathOperator{\id}{id}
\DeclareMathOperator{\image}{im} 
\DeclareMathOperator{\card}{card}
\DeclareMathOperator{\dom}{dom}
\DeclareMathOperator{\sbdy}{bd} 
\newcommand{\mdm}{\texttt{mdm}\xspace}
\newcommand{\maxdim}{k} 
\newcommand{\rightconnects}[1]{\rightarrowtriangle_{#1}}
\newcommand{\leftconnects}[1]{\leftarrowtriangle_{#1}}
\newcommand{\leftrightconnects}[1]{\leftrightarrowtriangle_{#1}} %
\newcommand{\nrightconnects}[1]{\Arrownot\rightarrowtriangle_{#1}}
\newcommand{\nleftconnects}[1]{\Arrownot\leftarrowtriangle_{#1}}
\newcommand{\ccomponents}{\mathcal{C}/{\sim}}
\setlist[enumerate,1]{label=$\mathrm{(\arabic{enumi})}$}
\begin{document}

	\title{Multiparameter Discrete {Morse} Theory}

	\author{Guillaume Brouillette}
	\author{Madjid Allili}
	\author{Tomasz Kaczynski}
	\address[G.~Brouillette, T.~Kaczynski]{D{\'e}partement de math{\'e}matiques, Universit{\'e} de Sherbrooke, boul. de l'Universit{\'e}, Sherbrooke, J1K 2R1, QC, Canada}
	\address[M.~Allili]{Departments of Computer Science and Mathematics, Bishop's University, College Street, Sherbrooke, J1M 1Z7, QC, Canada}
	\email[G.~Brouillette]{Guillaume.Brouillette2@USherbrooke.ca}
	\email[M.~Allili]{mallili@ubishops.ca}
	\email[T.~Kaczynski]{Tomasz.Kaczynski@USherbrooke.ca}

	\subjclass[2020]{Primary 57Q05; Secondary 37B30, 52C99, 57Q10}
	\keywords{Discrete Morse theory, vector function, multiparameter persistent homology, simplicial complex, discrete gradient field, combinatorial dynamics, Morse decomposition, Conley index, singularity theory}
	\thanks{G.B. was supported by the Natural Sciences and Engineering Council of Canada (NSERC) under grant number 569623 and the Fonds de recherche du Qu{\'e}bec -- Nature et technologies (FRQNT) under grant number 289126. T.K. was supported by a Discovery Grant from the NSERC.}

	\begin{abstract}
		The main objective of this paper is to extend Morse-Forman theory to vector-valued functions. This is mostly motivated by the need to develop new tools and methods to compute multiparameter persistence. To generalize the theory, in addition to adapting the main definitions and results of Forman to this vectorial setting, we use concepts of combinatorial topological dynamics studied in recent years. This approach proves to be successful in the following ways. First, we establish a result which is more general than that of Forman regarding the sublevel sets of a multidimensional discrete Morse function. Second, we find a way to induce a Morse decomposition in critical components from the critical points of such a function. Finally, we deduce a set of Morse equation and inequalities specific to the multiparameter setting.
	\end{abstract}

	\maketitle

	\section{Introduction}\label{sec:introduction}

	Discrete Morse theory (DMT) introduced by \citet{Forman1998, Forman2002} has proven to be extremely useful in a panoply of applications where the topological processing of data
	is a key ingredient. Many domains such as visualization, molecular biology, computer vision, computational geometry, to name but a few, that rely on point cluster generation and
	meshing techniques have already used Forman's theory very successfully. Moreover, this theory has become central in the emerging and fast-growing field called topological data
	analysis (TDA) which aims at providing efficient topological and geometrical tools to extract and organize relevant qualitative information about given data. DMT can be used directly
	for discrete data processing or as a procedure that simplifies and reduces the computation of \emph{Persistent Homology} (PH), another very popular and very efficient tool used in TDA.

	In this paper, we focus exclusively on the discrete Morse theory developed by Forman since the work undertaken is based to some extent on the ideas from~\citet{Allili2019},
	which were developed and expressed using the Forman framework. Nonetheless, there are alternative discrete Morse theories available with equivalent potential than the Forman's approach~\citep{Fugacci2020, Scoville2019}.
	One version that has gained prominence in literature is the piecewise-linear (PL) Morse theory introduced by \citet{Banchoff1967}. Recent additions to this approach made notably by \citet{Bloch2013} and \citet{Grunert2023} provide an insight about the possibility of using it to develop a framework for persistent homology of spaces and maps similar to the one achieved by Forman's theory.

	PH has been introduced in~\citep{Edelsbrunner2002} as a tool for the analysis of homologies of spaces which can be given in terms of meshes or cluster of points by means of a filtration
	defined on the space. This theory constitutes a reliable and efficient method to track the evolution of topological features in data. Its key properties that make it important in TDA are
	the robustness to noise, the independence of dimension, and its computability and algorithmic framework.

	In the standard setting, PH is defined on a nested growing sequence, often called a \emph{filtration}, of sublevel sets of a function $f: |S| \rightarrow \mathbb{R}$ where $S$ is typically a
	simplicial complex built from data and $f$ is a continuous function on $|S|$, usually called a filtering function. It provides topological invariants of the filtration known as barcodes or
	persistence diagrams which measure the persistence of topological features of the data at different resolutions and scales in the space as encoded by the filtration. In this context, the
	filtration is indexed by a totally ordered set $T$ and hence it is called a single-parameter filtration and it gives rise to the single-parameter PH. However, in many applications
	the data is best described using multiple parameters and using single-parameter filtrations can result in missing important information about the data. This is particularly the case for spatial
	complex and heterogeneous noisy data whose topological structure can depend on several parameters such as the scale, the density, the presence of outliers and other artifacts
	\citep{Vipond2021, xia-wei2015}.

	These shortcomings triggered the development of multiparameter persistence homology (MPH) defined on a multiparameter filtration of the sublevel sets of a
	one-parameter family of functions $f_t : |S| \rightarrow \mathbb{R}$ \citep{Carlsson2009}. Typically, the parameter $t$ is taken in some continuous interval such as $[0,1]$. However when the
	number of used parameters is finite, the family of functions can be replaced by a function $f: |S| \rightarrow \mathbb{R}^k$, for some positive integer $k$.
	The topological information provided by MPH can be encoded as multiparameter persistence modules~\citep{Carlsson2010} which do not possess a simple representation comparable to that of
	the persistence diagrams in the PH case. The rank invariant introduced by~\citet{Carlsson2010} is an alternative invariant that contains the same information as the persistence diagram in
	the one parameter case. Software for visualizing the rank invariant of the two-parameter persistent homology (RIVET) is provided by \citet{Lesnick2015}. There are also efforts to find invariants that
	can be combined with statistical and machine learning tools such as the persistent landscapes~\citep{Bubenik2015,Vipond2018}. However the extraction of multiparameter
	persistent information remains a hard task in the general case and the existing methods for the computation of MPH are computationally expensive due to the considerable size of
	complexes built from data.

	One direction explored in some recent works consists in designing algorithms to reduce the original complexes generated from data to enough smaller cellular complexes, homotopically equivalent
	to the initial ones by means of {\em acyclic partial matchings} of discrete Morse theory. This approach used initially for the one-parameter filtrations has been extended to the multiparameter
	case for the first time by~\citet{Allili2017}. More efficient algorithms based on a similar idea are obtained by~\citet{Allili2019, Scaramuccia2020}. Even though the designed algorithms make use of the idea of
	discrete Morse pairings, the works did not provide a systematic extension of the Forman's discrete Morse theory to the multiparameter case although a serious attempt was made by~\citet{Allili2019}
	to achieve this goal. Indeed, new definitions of a multidimensional discrete Morse function, of its gradient field, its regular and critical cells are proposed. Moreover, it was proved that given a filtering
	function $f: |S| \rightarrow \mathbb{R}^k$, there exists a multidimensional discrete Morse (\mdm) function $g$ with the same order of sublevel sets and the same acyclic partial matching as the one associated with $f$.

	In this paper, the combinatorial vector fields framework is used to further develop the concept and the properties of the \mdm theory. Many notions of the classical discrete Morse theory are extended to the \mdm case. The relationship between a \mdm function and its components functions are investigated and the handle decomposition and collapsing theorems are established. Moreover, results on Morse inequalities and Morse decompositions are proved for the first time. An additional contribution achieved in this work consists of a method that allows to partition critical cells of a \mdm function into connected critical components. It is known from smooth singularity theory that the criticalities of smooth vector-valued functions are generally sets and not insolated points. Experimentations~\citep{Allili2019} suggest that it is also the case for a \mdm function
	where each criticality is given as a component that may consist of several cells. We refer the reader to \citep{Budney2021,Smale1975,Wan1975} and references therein for the classical singularity theory setting. In particular, the recent work of \citet{Budney2021} has been largely motivated by the call coming from \citet{Allili2019} for providing an adequate application-driven smooth background and geometric insight that would help us in understanding the discrete counterpart.

	The partition of critical cells in components proposed in this work is a first step in this direction with the goal of linking \mdm theory to the smooth singularity theory mentioned above and the piecewise linear setting \citep{Edelsbrunner2008a,Huettenberger2014}, in which criticalities of vector-valued functions also appear in the form of sets.

	The paper is organized as follows. In Section~\ref{sec:preliminaries}, we recall useful definitions and terminology about simplicial complexes and combinatorial vector fields.
	Section~\ref{sec:CombinatorialDynamics} is devoted to introduce and discuss notions of combinatorial dynamics on simplicial complexes which provide a framework to represent
	discrete vector fields and discrete Morse functions in terms of discrete dynamical systems and flows. This allows to associate to combinatorial vector fields
	the concepts of isolated invariant sets, Conley index, Morse decompositions and Morse inequalities which is achieved in Corollary~\ref{coro:MorseInequalitiesMorseDecomposition}.
	Some new results about Morse decompositions are also discussed. In Section~\ref{sec:MDM},
	we build on preliminary results of~\citet{Allili2019} and define multidimensional, or multiparameter, discrete Morse functions and outline many of their properties. In the sections that follow, many classical
	results of Forman's theory for real-valued functions are extended to vector-valued functions. One of the difficulties in the study of \mdm functions is the classification of their criticalities.
	Unlike real-valued discrete Morse functions for which each criticality is represented by a single cell, experimentations~\citep{Allili2019} and the smooth theory of singularity both
	suggest that a criticality of a \mdm function is a component that may contain several cells. In Section~\ref{sec:CriticalComponents}, a method to group critical cells to form critical
	components is proposed in Definition~\ref{def:CriticalComponents}. This leads to Theorem~\ref{theo:MorseDecompositionfCycle} on Morse decompositions and acyclicity. Our final result concerning
	Morse inequalities is stated in Theorem~\ref{theo:MorseInequalitiesCritComponents}.

	Concluding remarks and future work directions are proposed at the end of the paper.

	\section{Preliminaries}\label{sec:preliminaries}
	\subsection{Maps and relations}
	Consider two sets $X$ and $Y$. A \emph{partial map} $f:X\nrightarrow Y$ is a function whose \emph{domain} $\dom f$ is a subset of $X$. We note $\image f := f(X)$ the \emph{image} of $f$ and $\Fix f := \left\lbrace x\in\dom f\ |\ f(x) = x\right\rbrace$ the set of fixed points of $f$.

	Moreover, a \emph{multivalued map} $F:X\multimap Y$ is a function which associates each $x\in X$ to a non-empty subset $F(x)\subseteq Y$. For every $y\in Y$, we write $F^{-1}(y):=\left\lbrace x\in X\ |\ y\in F(x) \right\rbrace$ and for subsets $A\subseteq X$ and $B\subseteq Y$, we define $F(A):= \bigcup_{x\in A}F(x)$ and $F^{-1}(B) := \bigcup_{y\in B}F^{-1}(y)$.

	Furthermore, for a binary relation $R\subseteq X\times X$, we write $xRy$ when $(x,y)\in R$. We define the \emph{transitive closure} $\bar{R}\subseteq X\times X$ of $R$ as the relation such that $x\bar{R}y$ if there exists a sequence $x=x_0,x_1,...,x_n=y$ in $X$ such that $n\geq 1$ and $x_{i-1}Rx_i$ for each $i=1,...,n$. The relation $\bar{R}\cup\id_X$, where $\id_X$ is the identity relation on $X$, is both reflexive and transitive, making it a preorder, which we call the \emph{preorder induced by $R$}. Note that the preorder induced by a reflexive relation is simply its transitive closure.

	\subsection{Simplicial complexes}
	In the context of this article, we consider a \emph{simplicial complex} $K$ to be a finite collection of non-empty finite sets such that for all $\sigma\in K$ and $\tau\subseteq\sigma$, we have $\tau\in K$. An element of a simplicial complex is called a \emph{simplex}. The \emph{dimension} of a simplex $\sigma$ is $\dim\sigma = \card\sigma-1$, and we note $K_p$ the set of simplices in $K$ of dimension $p$. The superscript $\sigma^{(p)}$ is sometimes used to specify a simplex $\sigma$ is of dimension $p$.

	Furthermore, if $\tau\subseteq\sigma\in K$, we say that $\tau$ is a \emph{face} of $\sigma$ and $\sigma$ a \emph{coface} of $\tau$. If, in addition, we have $\dim\tau=\dim\sigma-1$, then $\tau$ is said to be a \emph{facet} of $\sigma$ and, conversely, $\sigma$ a \emph{cofacet} of $\tau$.

	There are many ways to endow an abstract simplicial complex with a topology. When considering a simplicial complex as a combinatorial or discrete object, the Alexandrov topology is quite convenient \citep{McCord1966, Stong1966}. In this topology, the \emph{closure} of a set of simplices $A\subseteq K$, which we note $\Cl A$, is the set of all faces of all simplices in $A$. We also call $\Ex A := \Cl A\backslash A$ the \emph{exit set} of $A$. Otherwise, since a simplicial complex is a particular case of a CW-complex, we may also identify each simplex $\sigma^{(p)}\in K$ with a cell of dimension $p$, which is homeomorphic to an open ball, in a Hausdorff space. In practice, this Hausdorff space is generally $\R^d$ and each $p$-cell is the convex hull of $p+1$ affinely independent points. This point of view will prove itself to be particularly useful to generalize some of Forman's classical results. We then note $\cl A$ and $\exit A := \cl A\backslash A$ respectively the \emph{closure} and \emph{exit set} of $A$ in $K$ considered as a CW-complex. When using the operators $\Cl$, $\Ex$, $\cl$ or $\exit$ with a singleton, we omit the braces.

	We call $\sigma\in K$ a \emph{free face} of a simplicial complex $K$ if it has a unique cofacet $\tau\supset\sigma$. When $K$ has a free face $\sigma$ with cofacet $\tau\supset\sigma$, then we call an \emph{elementary collapse} the operation of removing $\sigma$ and $\tau$ to obtain a smaller subcomplex $K\backslash\{\sigma,\tau\}$. We say that $K$ \emph{collapses onto} a subcomplex $L$, noted $K\searrow L$, if $L$ can be obtained from $K$ by doing a sequence of elementary collapses. Collapsing a simplicial complex onto a subcomplex may be seen as a deformation retraction. More precisely, if $K\searrow L$, then $L$ is a deformation retract of $K$, thus $K$ and $L$ are \emph{homotopy equivalent} spaces when endowed with the topology of CW complexes, and we note $K\simeq L$.

	\subsection{Combinatorial vector fields}
	We now introduce the concept of discrete vector fields on simplicial complexes. They were first used within the framework of discrete Morse theory by \citet{Forman1998, Forman2002}, who defined them as collections of pairs of simplices. Here, we use the definition proposed by \citet{Batko2020, Kaczynski2016}, which is better suited in our context. Also, note that a discrete vector field is a particular case of a multivector field, as defined by \citet{Mrozek2017}.

	\begin{defn}[Discrete vector field]
		A \emph{discrete vector field}, or a \emph{combinatorial vector field}, on a simplicial complex $K$ is an injective partial self-map $\cV:K\nrightarrow K$ such that
		\begin{enumerate}
			\item for each $\sigma\in\dom\cV$, either $\cV(\sigma) = \sigma$ or $\cV(\sigma)$ is a cofacet of $\sigma$;
			\item $\dom\cV\cup\image\cV = K$;
			\item $\dom\cV\cap\image\cV = \Fix\cV$.
		\end{enumerate}
	\end{defn}

	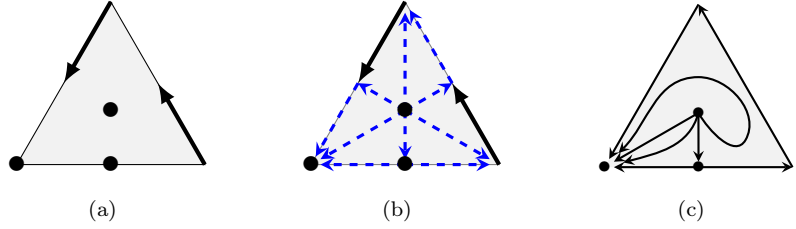
\begin{figure}
		\centering
		\begin{subfigure}[b]{0.3\textwidth}
			\centering
			\begin{tikzpicture}[scale=1.25]
				\coordinate (D) at (0,0);
				\coordinate (A) at (1,1.732);
				\coordinate (E) at (2,0);

				\coordinate (AD) at ($0.5*(A)+0.5*(D)$);
				\coordinate (AE) at ($0.5*(A)+0.5*(E)$);
				\coordinate (DE) at ($0.5*(D)+0.5*(E)$);

				\coordinate (ADE) at ($0.333*(A)+0.333*(D)+0.333*(E)$);

				\fill[black!5] (A) -- (E) -- (D) -- cycle;

				\draw (A) -- (E) -- (D) -- cycle;

				\node at (D){\LARGE $\bullet$};
				\node at (DE){\LARGE $\bullet$};
				\node at (ADE){\LARGE $\bullet$};

				\draw[ultra thick, -latex] (A) -- (AD);
				\draw[ultra thick, -latex] (E) -- (AE);


			\end{tikzpicture}
			\caption{}\label{fig:DiscreteFieldComb}
		\end{subfigure}
		\begin{subfigure}[b]{0.3\textwidth}
			\centering
			\begin{tikzpicture}[scale=1.25]
				\coordinate (D) at (0,0);
				\coordinate (A) at (1,1.732);
				\coordinate (E) at (2,0);

				\coordinate (AD) at ($0.5*(A)+0.5*(D)$);
				\coordinate (AE) at ($0.5*(A)+0.5*(E)$);
				\coordinate (DE) at ($0.5*(D)+0.5*(E)$);

				\coordinate (ADE) at ($0.333*(A)+0.333*(D)+0.333*(E)$);

				\fill[black!5] (A) -- (E) -- (D) -- cycle;

				\draw[black!50] (A) -- (E) -- (D) -- cycle;

				\draw[very thick, blue, dashed, -stealth] (ADE) -- ($(A)!0.1!(ADE)$);
				\draw[very thick, blue, dashed, -stealth] (ADE) -- (AD);
				\draw[very thick, blue, dashed, -stealth] (ADE) -- ($(D)!0.1!(ADE)$);
				\draw[very thick, blue, dashed, -stealth] (ADE) -- ($(DE)!0.1!(ADE)$);
				\draw[very thick, blue, dashed, -stealth] (ADE) -- ($(E)!0.1!(ADE)$);
				\draw[very thick, blue, dashed, -stealth] (ADE) -- (AE);
				\draw[very thick, blue, dashed, -stealth] (DE) -- ($(D)!0.1!(DE)$);
				\draw[very thick, blue, dashed, -stealth] (DE) -- ($(E)!0.1!(DE)$);
				\draw[very thick, blue, dashed, -stealth] (AD) -- ($(D)!0.1!(AD)$);
				\draw[very thick, blue, dashed, -stealth] (AE) -- ($(A)!0.1!(AE)$);

				\node at (D){\LARGE $\bullet$};
				\node at (DE){\LARGE $\bullet$};
				\node at (ADE){\LARGE $\bullet$};

				\draw[ultra thick, -latex] (A) -- (AD);
				\draw[ultra thick, -latex] (E) -- (AE);


			\end{tikzpicture}
			\caption{}\label{fig:DiscreteFieldFlow}
		\end{subfigure}
		\begin{subfigure}[b]{0.3\textwidth}
			\centering
			\begin{tikzpicture}[scale=1.25]
				\coordinate (D) at (0,0);
				\coordinate (A) at (1,1.732);
				\coordinate (E) at (2,0);

				\coordinate (AD) at ($0.5*(A)+0.5*(D)$);
				\coordinate (AE) at ($0.5*(A)+0.5*(E)$);
				\coordinate (DE) at ($0.5*(D)+0.5*(E)$);

				\coordinate (ADE) at ($0.333*(A)+0.333*(D)+0.333*(E)$);

				\fill[black!5] (A) -- (E) -- (D) -- cycle;

				\node at (D){$\bullet$};
				\node at (DE){$\bullet$};
				\node at (ADE){$\bullet$};

				\draw[thick, -stealth] (A) -- ($(D)!0.1!(AD)$);
				\draw[thick, -stealth] (E) -- (A);
				\draw[thick, -stealth] (DE) -- ($(D)!0.1!(DE)$);
				\draw[thick, -stealth] (DE) -- (E);
				\draw[thick, -stealth] (ADE) -- ($(DE)!0.1!(ADE)$);
				\draw[thick, -stealth] (ADE) -- ($(D)!0.1!(ADE)$);
				\draw[thick, -stealth] (ADE) to[out=255, in=15] ($(D) + (0.2,0.05)$);
				\draw[thick, -stealth] (ADE) to[out=-60, in=240] ($(ADE)!0.5!(E)$) to[out=60, in=0] ($(ADE)!0.33!(A)$) to[out=180, in=45] ($(D) + (0.15,0.15)$);


			\end{tikzpicture}
			\caption{}\label{fig:DiscreteFieldCont}
		\end{subfigure}
		\caption{In \subref{fig:DiscreteFieldComb}, a discrete vector field $\cV$. The dots identify the fixed points of $\cV$, while the arrows show the pairs of simplices $(\sigma,\tau)$ such that $\cV(\sigma) = \tau$. In \subref{fig:DiscreteFieldFlow}, the induced flow $\Pi_\cV$. In \subref{fig:DiscreteFieldCont}, a continuous flow on the underlying space which mimics the dynamics of the combinatorial flow}\label{fig:DiscreteField}
	\end{figure}

	For some discrete vector field $\cV$ on $K$, we call a \emph{$\cV$-path} a sequence
	\begin{align*}
		\alpha_0^{(p)},\beta_0^{(p+1)},\alpha_1^{(p)},\beta_1^{(p+1)},\alpha_2^{(p)},...,\beta_{n-1}^{(p+1)},\alpha_n^{(p)}
	\end{align*}
	of simplices in $K$ such that $\alpha_i\in\dom\cV$, $\cV(\alpha_i) = \beta_i$ and $\beta_i\supset\alpha_{i+1}\neq\alpha_i$
	for each $i=0,...,n-1$. A $\cV$-path is \emph{closed} if $\alpha_0=\alpha_n$ and \emph{nontrivial} if $n\geq 1$.
	A discrete vector field $\cV$ is said to be \emph{acyclic} if there is no nontrivial closed $\cV$-path.

	Finally, we define the notion of $\cV$-compatibility as introduced by \citet{Mrozek2017} using the notation of \citet{Batko2020, Kaczynski2016}. Note that this is unrelated to the compatibility of a discrete field with a multifiltration as defined by \citet{Scaramuccia2020}.

	\begin{defn}[$\cV$-compatibility]
		Let $\cV$ be a combinatorial vector field on a simplicial complex $K$. We say that $A\subseteq K$ is \emph{$\cV$-compatible} if for all $\sigma\in K$,
		we have $\sigma^-\in A\Leftrightarrow\sigma^+\in A$ where

		\begin{gather*}
			\sigma^+ := \begin{cases}
				\cV(\sigma) &\text{ if } \sigma\in\dom\cV\\
				\sigma &\text{ otherwise}
			\end{cases}
			\qquad \text{and}\qquad
			\sigma^- := \begin{cases}
				\sigma &\text{ if } \sigma\in\dom\cV\\
				\cV^{-1}(\sigma) &\text{ otherwise}
			\end{cases}.
		\end{gather*}
	\end{defn}

	For all $\sigma\in K$, can see that $\sigma^-=\sigma=\sigma^+$ when $\sigma\in\Fix\cV$, $\sigma^-\subsetneq\sigma=\sigma^+$ when
	$\sigma\in\image\cV\backslash\Fix\cV$ and $\sigma^-=\sigma\subsetneq\sigma^+$ when $\sigma\in\dom\cV\backslash\Fix\cV$.

	\section{Combinatorial dynamics}\label{sec:CombinatorialDynamics}

	Many concepts of dynamical systems theory, notably elements of Conley index theory \citep{Conley1978}, can defined in the combinatorial setting. In this section, we recall such notions, which were mainly discussed in \citep{Batko2020, Kaczynski2016, Mrozek2017}. We also prove some new results, notably Propositions \ref{prop:BasicSetsAcyclicField} and \ref{prop:AcyclictyFieldEquiFlow}, regarding acyclic combinatorial vector fields and their induced flows, and Theorem \ref{theo:ConditionsForMorseDecomposition}.

	\subsection{Flows}

	We define the flow induced by a combinatorial vector field as in \citep{Batko2020,Kaczynski2016}. It is worth noting that this definition coincides
	with the one given in \citep{Mrozek2017} for combinatorial multivector fields when applied to vector fields.

	\begin{defn}[Flow associated to a discrete vector field]\label{def:flow}
		Given a combinatorial vector field $\cV$ on a simplicial complex $K$, the associated \emph{flow} $\Pi_\cV$ is the multivalued map $\Pi_\cV:K\multimap K$ such that
		\begin{align*}
			\Pi_\cV(\sigma) =
			\begin{cases}
				\Cl\sigma & \text{ if } \sigma\in\Fix\cV, \\
				\Ex\sigma\backslash\{\cV^{-1}(\sigma)\} & \text{ if } \sigma\in\image\cV\,\backslash\Fix\cV, \\
				\{\cV(\sigma)\} & \text{ if } \sigma\in\dom\cV\,\backslash\Fix\cV.
			\end{cases}
		\end{align*}
	\end{defn}

	Notice that $\Pi_\cV(\sigma) = \{\cV(\sigma)\}$ when $\sigma\in\dom\cV\backslash\Fix\cV$ and $\Pi_\cV(\sigma)\subseteq\Cl\sigma$ otherwise.

	A \emph{solution} of a flow $\Pi_\cV$ is a partial map $\varrho:\Z\nrightarrow K$ such that $\dom\varrho$ is an interval of $\Z$ and, whenever $i,i+1\in\dom\varrho$, we have $\varrho(i+1)\in\Pi_\cV(\varrho(i))$. A solution is \emph{full} when $\dom\varrho=\Z$. We note $\Sol(\sigma,A)$ the set of full solutions $\varrho:\Z\rightarrow A$ for which $\sigma\in\image\varrho$.

	Moreover, a solution $\varrho$ with $\dom\varrho = \{m,m+1,...,m+n\}$ is \emph{nontrivial} if $n \geq 1$ and it is \emph{closed} if $\varrho(m) = \varrho(m+n)$. We note $\sigma\rightconnects{\cV}\tau$ or $\tau\leftconnects{\cV}\sigma$ if there exists a nontrivial solution going from $\sigma$ to $\tau$. Similarly, for $A,B\subset K$, we write
	\begin{itemize}[label=$\bullet$]
		\item $A\rightconnects{\cV}\tau$ if $\sigma\rightconnects{\cV}\tau$ for some $\sigma\in A$;

		\item $\sigma\rightconnects{\cV} B$ if $\sigma\rightconnects{\cV}\tau$ for some $\tau\in B$;

		\item $A\rightconnects{\cV} B$ if $\sigma\rightconnects{\cV}\tau$ for some $\sigma\in A$ and $\tau\in B$.
	\end{itemize}
	Conversely, we use the symbol $\nrightconnects{\cV}$ if there exists no nontrivial solution going from a simplex (or a set) to another simplex (or set).

	From the definition of a flow, we can make the following observations, which will be useful in Section \ref{sec:MorseTheorems} to deduce properties of a subcomplex $L\subseteq K$ from $\Pi_\cV$.

	\begin{lem}\label{lem:IfFaceConnect}
		Let $\Pi_\cV:K\multimap K$ be a flow and consider $\alpha\subset\sigma\in K$. If $\sigma\nrightconnects{\cV}\alpha$, then $\sigma\in\image\cV\backslash\Fix\cV$ and $\alpha=\cV^{-1}(\sigma)$.
	\end{lem}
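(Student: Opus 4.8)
The plan is to read off the conclusion from the three-branch definition of $\Pi_\cV$ together with the partition
\begin{gather*}
K = \bigl(\dom\cV\setminus\Fix\cV\bigr)\ \sqcup\ \bigl(\image\cV\setminus\Fix\cV\bigr)\ \sqcup\ \Fix\cV,
\end{gather*}
which is exactly what axioms (2) and (3) of a discrete vector field provide. Since $\alpha\subset\sigma$ is a proper face, we have $\alpha\in\Ex\sigma$ throughout. I would argue by contraposition on the position of $\sigma$ in this partition: in each of the two cases $\sigma\in\Fix\cV$ and $\sigma\in\dom\cV\setminus\Fix\cV$ I exhibit an explicit nontrivial solution running from $\sigma$ to $\alpha$, which contradicts the hypothesis $\sigma\nrightconnects{\cV}\alpha$. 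This forces $\sigma\in\image\cV\setminus\Fix\cV$, and then the corresponding branch of $\Pi_\cV$ identifies $\alpha$ immediately.

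The two one-step cases are routine. If $\sigma\in\Fix\cV$, then $\Pi_\cV(\sigma)=\Cl\sigma\ni\alpha$, so the length-one sequence $\sigma,\alpha$ is a nontrivial solution and $\sigma\rightconnects{\cV}\alpha$, which is excluded. Symmetrically, once $\sigma\in\image\cV\setminus\Fix\cV$ is established, the identity $\Pi_\cV(\sigma)=\Ex\sigma\setminus\{\cV^{-1}(\sigma)\}$ shows that every proper face of $\sigma$ except $\cV^{-1}(\sigma)$ belongs to $\Pi_\cV(\sigma)$; hence if $\alpha\neq\cV^{-1}(\sigma)$ the sequence $\sigma,\alpha$ would again be a nontrivial solution, contradicting the hypothesis. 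Therefore $\alpha=\cV^{-1}(\sigma)$, which is precisely the claimed conclusion.

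The one delicate case, and what I expect to be the main obstacle, is $\sigma\in\dom\cV\setminus\Fix\cV$, where $\Pi_\cV(\sigma)=\{\cV(\sigma)\}$ with $\cV(\sigma)$ a cofacet of $\sigma$ by axiom (1). Here a single step does not reach $\alpha$, so I must build the two-step route $\sigma,\tau,\alpha$ with $\tau:=\cV(\sigma)$, and the crux is to verify that $\tau\in\image\cV\setminus\Fix\cV$ so that the exit-set branch of $\Pi_\cV$ applies at $\tau$. If instead $\tau\in\Fix\cV$, then $\cV(\tau)=\tau=\cV(\sigma)$, and injectivity of $\cV$ would give $\tau=\sigma$, contradicting that $\tau$ is a strictly larger cofacet of $\sigma$. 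Thus $\tau\in\image\cV\setminus\Fix\cV$ and $\Pi_\cV(\tau)=\Ex\tau\setminus\{\cV^{-1}(\tau)\}=\Ex\tau\setminus\{\sigma\}$. Because $\alpha\subset\sigma\subset\tau$ we have $\alpha\in\Ex\tau$, while $\alpha\neq\sigma$ since $\alpha$ is a proper face of $\sigma$; hence $\alpha\in\Pi_\cV(\tau)$, making $\sigma,\tau,\alpha$ a nontrivial solution and again contradicting $\sigma\nrightconnects{\cV}\alpha$. This rules out the last unwanted case and leaves only $\sigma\in\image\cV\setminus\Fix\cV$ with $\alpha=\cV^{-1}(\sigma)$, completing the argument.
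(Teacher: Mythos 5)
Your proposal is correct and follows essentially the same case analysis as the paper's proof: rule out $\sigma\in\Fix\cV$ and $\sigma\in\dom\cV\setminus\Fix\cV$ by exhibiting one- and two-step solutions from $\sigma$ to $\alpha$, then read off $\alpha=\cV^{-1}(\sigma)$ from the remaining branch of $\Pi_\cV$. The only difference is that you spell out why $\cV(\sigma)\in\image\cV\setminus\Fix\cV$ via injectivity, a detail the paper leaves implicit.
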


	\begin{proof}
		If $\sigma\in\Fix\cV$, then $\Pi_\cV(\sigma)=\Cl\sigma\ni\alpha$, so $\sigma\rightconnects{\cV}\alpha$. If $\sigma\in\dom\cV\backslash\Fix\cV$, then $\Pi_\cV(\sigma)=\{\cV(\sigma)\}$ and $\Pi_\cV\left(\cV(\sigma)\right) = \Ex\cV(\sigma)\backslash\{\sigma\}\ni\alpha$, so $\sigma\rightconnects{\cV}\cV(\sigma)\rightconnects{\cV}\alpha$. Finally, suppose $\sigma\in\image\cV\backslash\Fix\cV$ but $\alpha\neq\cV^{-1}(\sigma)$. Then, $\Pi_\cV(\sigma) = \Ex\sigma\backslash\{\cV^{-1}(\sigma)\}\ni\alpha$ so, again, $\sigma\rightconnects{\cV}\alpha$.
	\end{proof}
	\pagebreak
	\begin{lem}\label{lem:IfConnectsCoFaceConnects}
		Let $\Pi_\cV:K\multimap K$ be a flow and consider $\sigma,\tau\in K$ such that $\sigma\rightconnects{\cV}\tau$.
		\begin{enumerate}
			\item\label{lem:IfConnectsCoFaceConnectsEnum1} For all $\beta\supset\sigma$, we have $\beta\rightconnects{\cV}\tau$.
			\item\label{lem:IfConnectsCoFaceConnectsEnum2} For all $\alpha\subset\tau$, we have $\sigma\rightconnects{\cV}\alpha$.
		\end{enumerate}
	\end{lem}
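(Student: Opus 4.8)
The plan is to prove both statements by concatenating solutions, reducing each to a single ``one-hop'' connection between a simplex and an incident face, which is precisely the situation analysed in Lemma~\ref{lem:IfFaceConnect}. Throughout I fix a witnessing solution $\varrho\colon\{0,\dots,N\}\to K$ with $\varrho(0)=\sigma$, $\varrho(N)=\tau$ and $N\geq 1$.

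For part~\ref{lem:IfConnectsCoFaceConnectsEnum1} I would produce a nontrivial solution from $\beta$ down to $\sigma$ and follow it by $\varrho$, so that $\beta\rightconnects{\cV}\sigma\rightconnects{\cV}\tau$ yields $\beta\rightconnects{\cV}\tau$. Applying Lemma~\ref{lem:IfFaceConnect} to the incident pair $\sigma\subset\beta$, we get $\beta\rightconnects{\cV}\sigma$ except in the single exceptional case $\beta\in\image\cV\backslash\Fix\cV$ with $\sigma=\cV^{-1}(\beta)$, that is $\cV(\sigma)=\beta$. In that case $\sigma\in\dom\cV\backslash\Fix\cV$, so by Definition~\ref{def:flow} the flow is single-valued there with $\Pi_\cV(\sigma)=\{\cV(\sigma)\}=\{\beta\}$; hence $\varrho$ is forced to step to $\beta$ first, i.e.\ $\varrho(1)=\beta$, and the tail $\varrho(1),\dots,\varrho(N)$ is a solution from $\beta$ to $\tau$, giving $\beta\rightconnects{\cV}\tau$ directly.

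Part~\ref{lem:IfConnectsCoFaceConnectsEnum2} is symmetric: I would instead append to $\varrho$ a one-hop connection from $\tau$ down to $\alpha$. Lemma~\ref{lem:IfFaceConnect} applied to $\alpha\subset\tau$ yields $\tau\rightconnects{\cV}\alpha$, and hence $\sigma\rightconnects{\cV}\alpha$, unless $\tau\in\image\cV\backslash\Fix\cV$ with $\alpha=\cV^{-1}(\tau)$. To dispose of this exceptional case I would examine the penultimate simplex $\mu:=\varrho(N-1)$, which satisfies $\tau\in\Pi_\cV(\mu)$, and run through the three branches of Definition~\ref{def:flow} for $\mu$: a short check shows that either $\mu=\alpha$, so that the truncation $\varrho(0),\dots,\varrho(N-1)$ already reaches $\alpha$, or else $\alpha\in\Pi_\cV(\mu)$, so that rerouting the final step of $\varrho$ from $\tau$ to $\alpha$ produces the desired solution. (The middle branch $\mu\in\image\cV\backslash\Fix\cV$ cannot have $\alpha=\cV^{-1}(\mu)$, since that would force $\mu=\cV(\alpha)=\tau$, contradicting $\tau\subsetneq\mu$.)

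I expect the main obstacle to be exactly these exceptional cases of Lemma~\ref{lem:IfFaceConnect}, in which the naive ``climb to a coface'' or ``drop to a face'' in one step is blocked by the single arrow that the flow deletes (the term $\cV^{-1}$ in Definition~\ref{def:flow}). The resolution in each case is to exploit the rigidity of $\Pi_\cV$ on $\dom\cV\backslash\Fix\cV$, where it is single-valued: this pins down the neighbouring simplex of $\varrho$ and lets one either truncate or reroute the given solution. The only remaining point to settle separately is the degenerate length-one situation $N=1$ (where the tail in part~\ref{lem:IfConnectsCoFaceConnectsEnum1}, respectively the truncation in part~\ref{lem:IfConnectsCoFaceConnectsEnum2}, collapses to a trivial solution), which I would check directly.
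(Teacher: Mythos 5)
Your proposal follows essentially the same route as the paper's own proof: for part~\ref{lem:IfConnectsCoFaceConnectsEnum1} you invoke Lemma~\ref{lem:IfFaceConnect} and, in the exceptional case $\cV(\sigma)=\beta$, use that $\Pi_\cV(\sigma)=\{\beta\}$ forces the witnessing solution to pass through $\beta$ so it can be truncated; for part~\ref{lem:IfConnectsCoFaceConnectsEnum2} you run the identical three-way case analysis on the penultimate simplex $\varrho(N-1)$, rerouting the last step to $\alpha$ or truncating when $\varrho(N-1)=\alpha$, exactly as the paper does. The degenerate length-one situations you defer (where the truncated tail becomes a trivial solution, e.g.\ $N=1$ with $\beta=\tau$) are likewise passed over silently in the paper's argument, so your treatment is on par with the published one.
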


	\begin{proof}
		To prove \ref{lem:IfConnectsCoFaceConnectsEnum1}, let $\beta\supset\sigma\rightconnects{\cV}\tau$. If $\beta\rightconnects{\cV}\sigma$, the result is obvious. Otherwise, by Lemma \ref{lem:IfFaceConnect}, we have $\beta\in\image\cV\backslash\Fix\cV$ and $\sigma=\cV^{-1}(\beta)$. Hence, $\Pi_\cV(\sigma)=\{\beta\}$ so every solution going from $\sigma$ to $\tau$ necessarily goes through $\beta$, thus $\beta\rightconnects{\cV}\tau$.

		To prove \ref{lem:IfConnectsCoFaceConnectsEnum2}, consider $\sigma\rightconnects{\cV}\tau\supset\alpha$. Again, if $\tau\rightconnects{\cV}\alpha$, we have the result. Otherwise, let $\varrho$ be a nontrivial solution with $\varrho(0)=\sigma$ and $\varrho(n)=\tau$. By Lemma \ref{lem:IfFaceConnect}, we have $\tau\in\image\cV\backslash\Fix\cV$ and $\alpha=\cV^{-1}(\tau)$, so $\varrho(n-1)\neq \tau$.
		\begin{itemize}
			\item If $\varrho(n-1)\in\Fix\cV$, then $\Pi_\cV(\varrho(n-1)) = \Cl\varrho(n-1)\supset\Cl\tau\ni\alpha$, so $\varrho':\Z\nrightarrow K$ such that $\varrho'(i)=\varrho(i)$ for $i=0,...,n-1$ and $\varrho'(n)=\alpha$ is a solution going from $\sigma$ to $\alpha$. Hence, $\sigma\rightconnects{\cV}\alpha$.

			\item If $\varrho(n-1)\in\image\cV\backslash\Fix\cV$, since $\cV(\alpha)=\tau\neq\varrho(n-1)$, we can again verify that $\alpha\in\Pi_\cV(\varrho(n-1))$, so $\varrho'$ as defined previously is still a solution going from $\sigma$ to $\alpha$. An example of such $\varrho'$ is shown in Figure \ref{fig:FlowSolutionAdapted}.

			\item If $\varrho(n-1)\in\dom\cV\backslash\Fix\cV$, we then have $\tau\in\Pi_\cV(\varrho(n-1))=\{\cV(\varrho(n-1))\}$, so $\cV(\varrho(n-1))=\tau=\cV(\alpha)$, hence $\varrho(n-1)=\alpha$ by the injectivity of $\cV$. Thus, $\varrho$ restricted to $[0,n-1]$ is a solution going from $\sigma$ to $\alpha$.\qedhere
		\end{itemize}
	\end{proof}

	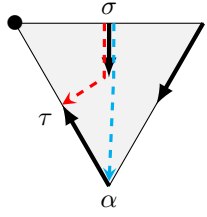
\begin{figure}
		\centering
		\begin{tikzpicture}[scale=1.25]
			\coordinate (A) at (1,1.732);
			\coordinate (E) at (2,0);
			\coordinate (B) at (3,1.732);

			\coordinate (AB) at ($0.5*(A)+0.5*(B)$);
			\coordinate (AE) at ($0.5*(A)+0.5*(E)$);
			\coordinate (BE) at ($0.5*(B)+0.5*(E)$);

			\coordinate (ABE) at ($0.333*(A)+0.333*(B)+0.333*(E)$);

			\fill[black!5] (A) -- (B) -- (E) -- cycle;

			\draw (A) -- (B) -- (E) -- cycle;

			\node at (A){\LARGE $\bullet$};

			\draw[ultra thick, -latex] (AB) -- (ABE);
			\draw[ultra thick, -latex] (B) -- (BE);
			\draw[ultra thick, -latex] (E) -- (AE);

			\draw[very thick, red, dashed, -stealth] ($(AB) - (0.05,0)$) -- ($(ABE) - (0.05,0)$) -- (AE);
			\draw[very thick, cyan, dashed, -stealth] ($(AB) + (0.05,0)$) -- ($(ABE) + (0.05,0)$) -- ($(E) + (0,0.05)$);

			\node[above] at (AB) {$\sigma$};
			\node[below left] at (AE) {$\tau$};
			\node[below] at (E) {$\alpha$};

		\end{tikzpicture}
		\caption{In red, a solution $\varrho$ from a simplex $\sigma$ to another simplex $\tau$. In part \ref{lem:IfConnectsCoFaceConnectsEnum2} of Lemma \ref{lem:IfConnectsCoFaceConnects}, we see that if $\varrho$ does not go through $\alpha=\tau^-$, then we can adapt it to define a new solution $\varrho'$, shown in blue, which goes from $\sigma$ to $\alpha$}\label{fig:FlowSolutionAdapted}
	\end{figure}

	As we will see in this section, many concepts of combinatorial dynamics rely on the idea of solutions between or contained inside sets. In particular, consider a subset $A$ of a simplicial complex $K$. In \citep{Batko2020, Kaczynski2016}, the simplices $\sigma\in A$ for which $\Sol(\sigma,A)\neq\emptyset$ are often considered. In the context of multivector fields \citep{Mrozek2017}, simplices $\sigma\in A$ for which $\Sol(\sigma^+,A)\neq\emptyset$ are mostly of interest. In our context, when $A$ is $\cV$-compatible, we prove that both ideas are equivalent.

	\begin{lem}\label{lem:ExistenceSolutions}
		Let $\Pi_\cV$ be a flow on a simplicial complex $K$ and consider $\sigma\in A\subseteq K$. If $A$ is $\cV$-compatible, then the following statements are equivalent
		\begin{enumerate}
			\item $\Sol(\sigma^-,A)\neq\emptyset$
			\item $\Sol(\sigma^+,A)\neq\emptyset$
			\item $\Sol(\sigma,A)\neq\emptyset$.
		\end{enumerate}
	\end{lem}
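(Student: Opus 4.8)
The plan is to exploit the trichotomy $K=\Fix\cV\sqcup(\image\cV\setminus\Fix\cV)\sqcup(\dom\cV\setminus\Fix\cV)$ recorded after the definition of $\cV$-compatibility in order to collapse the three statements into a single equivalence about one arrow of $\cV$. If $\sigma\in\Fix\cV$ then $\sigma^-=\sigma=\sigma^+$, so (1), (2) and (3) are literally the same assertion and nothing is to be proved. Otherwise $\sigma$ lies on a genuine arrow; writing $\beta:=\cV(\alpha)$ for the pair $\alpha\in\dom\cV\setminus\Fix\cV$, $\beta\in\image\cV\setminus\Fix\cV$, the observations on $\sigma^\pm$ give: if $\sigma=\beta$ then $\sigma^+=\sigma$ and $\sigma^-=\alpha$, so (2)$\Leftrightarrow$(3) is automatic and only (1)$\Leftrightarrow$(3) remains; if $\sigma=\alpha$ then $\sigma^-=\sigma$ and $\sigma^+=\beta$, so (1)$\Leftrightarrow$(3) is automatic and only (2)$\Leftrightarrow$(3) remains. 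In either case the entire content reduces to
\[
\Sol(\alpha,A)\neq\emptyset\ \Longleftrightarrow\ \Sol(\beta,A)\neq\emptyset .
\]
Before proving this I would note that $\alpha,\beta\in A$: since $\alpha\in\dom\cV$ we have $\alpha^-=\alpha$ and $\alpha^+=\cV(\alpha)=\beta$, so $\cV$-compatibility applied to $\alpha$ yields $\alpha\in A\Leftrightarrow\beta\in A$, and $\sigma\in A$ is one of the two.

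For the forward implication the same full solution works: if $\varrho\in\Sol(\alpha,A)$ with $\varrho(k)=\alpha$, then $\alpha\in\dom\cV\setminus\Fix\cV$ forces $\Pi_\cV(\alpha)=\{\beta\}$ and hence $\varrho(k+1)=\beta$, so $\beta\in\image\varrho$ and $\varrho\in\Sol(\beta,A)$.

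The backward implication is where I expect the real work, since a full solution through $\beta$ need not visit $\alpha$ and must be rerouted. Given $\varrho\in\Sol(\beta,A)$ with $\varrho(k)=\beta$, set $\gamma:=\varrho(k-1)$. By Definition~\ref{def:flow}, the only predecessor of $\beta$ lying in $\dom\cV\setminus\Fix\cV$ is $\alpha$ itself (for such $\gamma$, $\Pi_\cV(\gamma)=\{\cV(\gamma)\}\ni\beta$ forces $\cV(\gamma)=\beta$, i.e. $\gamma=\alpha$); so either $\gamma=\alpha$, and $\varrho$ already meets $\alpha$, or else $\gamma\in\Fix\cV$ or $\gamma\in\image\cV\setminus\Fix\cV$ with $\beta\subsetneq\gamma$, whence $\alpha\subsetneq\beta\subsetneq\gamma$. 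In the latter case I would check directly that $\alpha\in\Pi_\cV(\gamma)$: if $\gamma\in\Fix\cV$ then $\Pi_\cV(\gamma)=\Cl\gamma\ni\alpha$, and if $\gamma\in\image\cV\setminus\Fix\cV$ then $\alpha\in\Ex\gamma$ and $\cV^{-1}(\gamma)\neq\alpha$ (otherwise $\cV(\alpha)=\gamma\neq\beta$), so $\alpha\in\Ex\gamma\setminus\{\cV^{-1}(\gamma)\}=\Pi_\cV(\gamma)$. I would then define $\tilde\varrho$ by keeping $\varrho$ on $(-\infty,k-1]$, inserting $\tilde\varrho(k)=\alpha$, and shifting the tail via $\tilde\varrho(i)=\varrho(i-1)$ for $i\geq k+1$; the transitions $\gamma\to\alpha$, $\alpha\to\beta$ and the shifted tail are all valid by the preceding remarks, every value stays in $A$ (using $\alpha\in A$), and $\alpha\in\image\tilde\varrho$, giving $\tilde\varrho\in\Sol(\alpha,A)$ and closing the argument.
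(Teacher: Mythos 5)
Your proof is correct and follows essentially the same route as the paper's: reduce everything to the single equivalence $\Sol(\sigma^-,A)\neq\emptyset\Leftrightarrow\Sol(\sigma^+,A)\neq\emptyset$ along one arrow of $\cV$, get the forward direction for free from $\Pi_\cV(\sigma^-)=\{\sigma^+\}$, and for the backward direction split on whether the predecessor of $\sigma^+$ is a fixed point, a nontrivial image point, or a nontrivial domain point (the last case forcing it to equal $\sigma^-$ by injectivity), then splice $\sigma^-$ into the solution using $\cV$-compatibility to stay in $A$. The only cosmetic difference is that you shift the tail of the solution where the paper shifts the head; both reindexings are valid.
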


	\begin{proof}
		We first show that $\Sol(\sigma^-,A)\neq\emptyset \Leftrightarrow \Sol(\sigma^+,A)\neq\emptyset$. The result is obvious if $\sigma^-=\sigma^+$. Let $\sigma^-\neq\sigma^+$, hence $\sigma^-\in\dom\cV\backslash\Fix\cV$ and $\cV(\sigma^-)=\sigma^+$. If $\varrho:\Z\rightarrow A$ is a full solution with $\varrho(n) = \sigma^-$ for some $n\in\Z$, then $\varrho(n+1) = \sigma^+$, so $\varrho$ is a full solution with $\sigma^+\in\image\varrho$, thus $\Sol(\sigma^+,A)\neq\emptyset$. Conversely, suppose $\varrho:\Z\rightarrow A$ is a full solution with $\varrho(n) = \sigma^+$ for $n\in\Z$. By definition of $\Pi_\cV$, we have
		\begin{align*}
			\sigma^+ = \varrho(n)\in \begin{cases}
				\Cl\varrho(n-1) & \text{ if } \varrho(n-1)\in\Fix\cV, \\
				\Ex\varrho(n-1)\backslash\{\cV^{-1}(\varrho(n-1))\} & \text{ if } \varrho(n-1)\in\image\cV\,\backslash\Fix\cV, \\
				\{\cV(\varrho(n-1))\} & \text{ if } \varrho(n-1)\in\dom\cV\,\backslash\Fix\cV.
			\end{cases}
		\end{align*}
		\begin{itemize}
			\item If $\varrho(n-1)\in\Fix\cV$, then $\sigma^-\subset\sigma^+\in\Cl\varrho(n-1)=\Pi_\cV(\varrho(n-1))$, so $\sigma^-\in\Pi_\cV(\varrho(n-1))$. Thus, the map $\varrho':\Z\rightarrow K$ such that
			\begin{align*}
				\varrho'(i) = \begin{cases}
					\varrho(i+1) &\text{ if } i < n-1\\
					\sigma^- &\text{ if } i=n-1\\
					\varrho(i) & \text{ if } i \geq n
				\end{cases}
			\end{align*}
			is a full solution with $\sigma^-\in\image\varrho'\subseteq A$ because $\image\varrho'=\image\varrho\cup\{\sigma^-\}$ and $\sigma^-\in A$ by the $\cV$-compatibility hypothesis on $A$.

			\item If $\varrho(n-1)\in\image\cV\backslash\Fix\cV$, then $\sigma^-\subset\sigma^+\subset\varrho(n-1)$, so $\sigma^-\in\Ex\varrho(n-1)$. Since $\cV(\sigma^-)=\sigma^+\neq\varrho(n-1)$, it follows that $\sigma^-\neq\cV^{-1}(\varrho(n-1))$ and $\sigma^-\in\Ex\varrho(n-1)\backslash\{\cV^{-1}(\varrho(n-1))\}=\Pi_\cV(\varrho(n-1))$. Thus, the map $\varrho'$ as defined above is again a full solution with $\sigma^-\in\image\varrho'\subseteq A$.

			\item If $\varrho(n-1)\in\dom\cV\backslash\Fix\cV$, then $\cV(\sigma^-) = \sigma^+ = \cV(\varrho(n-1))$, so $\sigma^-=\varrho(n-1)$ by injectivity of $\cV$. Hence, $\varrho\in\Sol(\sigma^-,A)\neq\emptyset$.
		\end{itemize}
		This shows that $\Sol(\sigma^-,A)\neq\emptyset\Leftrightarrow\Sol(\sigma^+,A)\neq\emptyset$. It follows that $\Sol(\sigma,A)\neq\emptyset\Leftrightarrow\Sol(\sigma^+,A)\neq\emptyset$. Indeed, we have either $\sigma=\sigma^+$ or $\sigma=\sigma^-$: the first case is obvious and we have just proven the second.
	\end{proof}

	\subsection{Isolated invariant sets and Conley index}

	\begin{defn}[(Isolated) invariant set]\label{def:IsolatedInvariantSet}
		Let $\Pi_\cV$ be a flow on a simplicial complex $K$ and consider $S\subseteq K$.
		\begin{enumerate}
			\item\label{def:IsolatedInvariantSetEnum1} We say that $S$ is an \emph{invariant set} if for every $\sigma\in S$, we have $\Sol(\sigma,S)\neq\emptyset$.
			\item\label{def:IsolatedInvariantSetEnum2} An invariant set $S$ is \emph{isolated} if $\Ex S$ is closed and there is no solution $\varrho:\{-1,0,1\}\rightarrow K$ such that $\varrho(-1),\varrho(1)\in S$ and $\varrho(0)\in\Ex S$.
		\end{enumerate}
	\end{defn}

	\begin{ex}
		Consider $\cV$ as in Figure \ref{fig:InvariantIsolated}. The set $\{B, AB, BD, ABD\}$, shown in red, is not invariant since every solution that goes through it necessarily exits the set. The orange set $\{C, CD, ACD\}$ is invariant, but not isolated since its exit set is not closed and the map $\varrho:\{-1,0,1\}\rightarrow K$ such that $\varrho(-1)=ACD$, $\varrho(0) = AC$ and $\varrho(1)=C$ is a solution that violates the definition of isolated set. The blue set $\{C, AC, CD, ACD\}$ is also invariant, its exit set is closed, but it is not isolated because there is a map $\varrho:\{-1,0,1\}\rightarrow K$, namely the one such that $\varrho(-1)=ACD$, $\varrho(0) = A$ and $\varrho(1)=AC$, which violates the definition. Finally, we can see that the green set $\{A, C, D, AC, AD, CD, ACD\}$ verifies both conditions to be an isolated invariant set.

		\begin{figure}[h]
			\centering
			\begin{subfigure}[b]{0.32\textwidth}
				\centering
				\begin{tikzpicture}[scale=1.2]
					\coordinate (D) at (0,0);
					\coordinate (A) at (1,1.732);
					\coordinate (E) at (2,0);
					\coordinate (B) at (3,1.732);

					\coordinate (AB) at ($0.5*(A)+0.5*(B)$);
					\coordinate (AD) at ($0.5*(A)+0.5*(D)$);
					\coordinate (AE) at ($0.5*(A)+0.5*(E)$);
					\coordinate (BE) at ($0.5*(B)+0.5*(E)$);
					\coordinate (DE) at ($0.5*(D)+0.5*(E)$);

					\coordinate (ADE) at ($0.333*(A)+0.333*(D)+0.333*(E)$);
					\coordinate (ABE) at ($0.333*(A)+0.333*(B)+0.333*(E)$);

					\fill[black!5] (A) -- (B) -- (E) -- (D) -- cycle;

					\fill[orange!50] ($(E)!\ec!(DE)$) arc(0:-90:\wc) -- ($(D) - (0,\wc)$) arc(270:60:\wc) arc(240:330:\wc) -- ($(A)!\cc!(ADE)$) -- cycle;
					\fill[red!50] ($(A)!\ec!(AB)$) arc(180:90:\wc) -- ($(B) + (0,\wc)$) arc(90:-30:\wc) -- +($(B)!{2-\wc-\ec}!(BE) - (B)$) arc(330:240:\wc) -- cycle;

					\draw (A) -- (B) -- (E) -- (D) -- cycle;
					\draw (A) -- (E);

					\node at (D){\LARGE $\bullet$};
					\node at (DE){\LARGE $\bullet$};
					\node at (ADE){\LARGE $\bullet$};

					\draw[ultra thick, -latex] (A) -- (AD);
					\draw[ultra thick, -latex] (AB) -- (ABE);
					\draw[ultra thick, -latex] (B) -- (BE);
					\draw[ultra thick, -latex] (E) -- (AE);

					\node[xshift=-11pt, yshift=-6pt] at (A) {$A$};
					\node[xshift=-11pt, yshift=-6pt] at (B) {$B$};
					\node[xshift=11pt, yshift=6pt] at (D) {$C$};
					\node[xshift=11pt, yshift=6pt] at (E) {$D$};

				\end{tikzpicture}
				\caption{}\label{fig:InvariantNotIsolatedNot}
			\end{subfigure}
			\begin{subfigure}[b]{0.32\textwidth}
				\centering
				\begin{tikzpicture}[scale=1.2]
					\coordinate (D) at (0,0);
					\coordinate (A) at (1,1.732);
					\coordinate (E) at (2,0);
					\coordinate (B) at (3,1.732);

					\coordinate (AB) at ($0.5*(A)+0.5*(B)$);
					\coordinate (AD) at ($0.5*(A)+0.5*(D)$);
					\coordinate (AE) at ($0.5*(A)+0.5*(E)$);
					\coordinate (BE) at ($0.5*(B)+0.5*(E)$);
					\coordinate (DE) at ($0.5*(D)+0.5*(E)$);

					\coordinate (ADE) at ($0.333*(A)+0.333*(D)+0.333*(E)$);
					\coordinate (ABE) at ($0.333*(A)+0.333*(B)+0.333*(E)$);

					\fill[black!5] (A) -- (B) -- (E) -- (D) -- cycle;

					\fill[cyan!50] ($(E)!\ec!(DE)$) arc(0:-90:\wc) -- ($(D) - (0,\wc)$) arc(270:150:\wc) -- +($(D)!{2-\wc-\ec}!(AD) - (D)$) arc(150:60:\wc) -- cycle;

					\draw (A) -- (B) -- (E) -- (D) -- cycle;
					\draw (A) -- (E);

					\node at (D){\LARGE $\bullet$};
					\node at (DE){\LARGE $\bullet$};
					\node at (ADE){\LARGE $\bullet$};

					\draw[ultra thick, -latex] (A) -- (AD);
					\draw[ultra thick, -latex] (AB) -- (ABE);
					\draw[ultra thick, -latex] (B) -- (BE);
					\draw[ultra thick, -latex] (E) -- (AE);

					\node[xshift=-11pt, yshift=-6pt] at (A) {$A$};
					\node[xshift=-11pt, yshift=-6pt] at (B) {$B$};
					\node[xshift=11pt, yshift=6pt] at (D) {$C$};
					\node[xshift=11pt, yshift=6pt] at (E) {$D$};

				\end{tikzpicture}
				\caption{}\label{fig:InvariantIsolatedNot}
			\end{subfigure}
			\begin{subfigure}[b]{0.32\textwidth}
				\centering
				\begin{tikzpicture}[scale=1.2]
					\coordinate (D) at (0,0);
					\coordinate (A) at (1,1.732);
					\coordinate (E) at (2,0);
					\coordinate (B) at (3,1.732);

					\coordinate (AB) at ($0.5*(A)+0.5*(B)$);
					\coordinate (AD) at ($0.5*(A)+0.5*(D)$);
					\coordinate (AE) at ($0.5*(A)+0.5*(E)$);
					\coordinate (BE) at ($0.5*(B)+0.5*(E)$);
					\coordinate (DE) at ($0.5*(D)+0.5*(E)$);

					\coordinate (ADE) at ($0.333*(A)+0.333*(D)+0.333*(E)$);
					\coordinate (ABE) at ($0.333*(A)+0.333*(B)+0.333*(E)$);

					\fill[black!5] (A) -- (B) -- (E) -- (D) -- cycle;

					\fill[green!50] ($(E) + (0,-\wc)$) arc(-90:30:\wc) -- +($(A)-(E)$) arc(30:150:\wc) -- +($(D)-(A)$) arc(150:270:\wc) -- cycle;

					\draw (A) -- (B) -- (E) -- (D) -- cycle;
					\draw (A) -- (E);

					\node at (D){\LARGE $\bullet$};
					\node at (DE){\LARGE $\bullet$};
					\node at (ADE){\LARGE $\bullet$};

					\draw[ultra thick, -latex] (A) -- (AD);
					\draw[ultra thick, -latex] (AB) -- (ABE);
					\draw[ultra thick, -latex] (B) -- (BE);
					\draw[ultra thick, -latex] (E) -- (AE);

					\node[xshift=-11pt, yshift=-6pt] at (A) {$A$};
					\node[xshift=-11pt, yshift=-6pt] at (B) {$B$};
					\node[xshift=11pt, yshift=6pt] at (D) {$C$};
					\node[xshift=11pt, yshift=6pt] at (E) {$D$};

				\end{tikzpicture}
				\caption{}\label{fig:InvariantIsolatedYes}
			\end{subfigure}
			\caption{In \subref{fig:InvariantNotIsolatedNot}, the red subset is not invariant while the orange subset is invariant, but not isolated. In \subref{fig:InvariantIsolatedNot}, the blue subset is also invariant but not isolated. In \subref{fig:InvariantIsolatedYes}, the green subset is isolated invariant}\label{fig:InvariantIsolated}
		\end{figure}
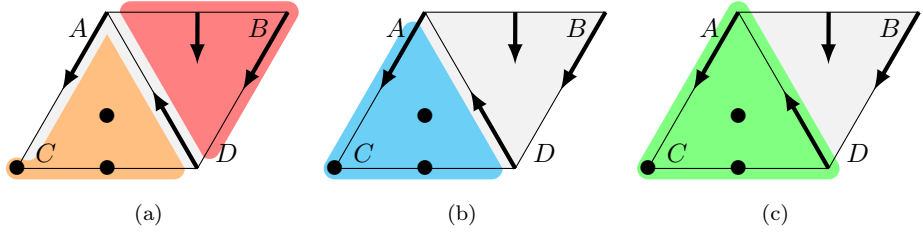
	\end{ex}

	The definition of an invariant set and that of an isolated invariant set given here come from \citep{Batko2020, Kaczynski2016}. In the context of multivector fields, \citet{Mrozek2017} defines a set $S\subseteq K$ as invariant if for all $\sigma\in S$, we have $\Sol(\sigma^+,S_\cV)\neq\emptyset$, where $S_\cV$ is the maximal subset of $S$ which is $\cV$-compatible. In the context of vector fields, we show that these definitions follow the same idea.

	\pagebreak

	\begin{lem}\label{lem:InvariantDefsEqui}
		Let $\Pi_\cV$ be a flow on a simplicial complex $K$.
		\begin{enumerate}
			\item\label{lem:InvariantDefsEquiEnum1} A subset of $K$ is invariant in the sense of \citet[Section 6.4]{Mrozek2017} if and only if it is $\cV$-compatible and invariant in the sense of Definition \ref{def:IsolatedInvariantSet}\ref{def:IsolatedInvariantSetEnum1}.

			\item A subset of $K$ is an isolated invariant set in the sense of \citet[Section 7.1]{Mrozek2017} if and only if it is isolated invariant in the sense of Definition \ref{def:IsolatedInvariantSet}\ref{def:IsolatedInvariantSetEnum2}.
		\end{enumerate}
	\end{lem}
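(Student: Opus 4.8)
The plan is to treat both parts by reducing the two notions of (isolated) invariance to a combination of $\cV$-compatibility and the conditions of Definition~\ref{def:IsolatedInvariantSet}, using Lemma~\ref{lem:ExistenceSolutions} as the main bridge. For part \ref{lem:InvariantDefsEquiEnum1} I would first record the elementary facts that every $\sigma\in K$ equals one of $\sigma^-,\sigma^+$, and that a $\cV$-compatible set $S$ coincides with $S_\cV$ (being a $\cV$-compatible subset of itself, it is contained in the maximal one). For the forward implication, assume $S$ is invariant in the sense of \citet{Mrozek2017}: for each $\sigma\in S$ there is a full solution in $S_\cV$ through $\sigma^+$, so $\sigma^+\in S_\cV$; since $S_\cV$ is $\cV$-compatible this forces $\sigma^-\in S_\cV$ as well, and as $\sigma\in\{\sigma^-,\sigma^+\}$ we get $\sigma\in S_\cV$. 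Since $\sigma\in S$ was arbitrary, $S\subseteq S_\cV\subseteq S$, so $S=S_\cV$ is $\cV$-compatible, and Lemma~\ref{lem:ExistenceSolutions} turns $\Sol(\sigma^+,S)\neq\emptyset$ into $\Sol(\sigma,S)\neq\emptyset$, giving invariance in the sense of Definition~\ref{def:IsolatedInvariantSet}\ref{def:IsolatedInvariantSetEnum1}. Conversely, if $S$ is $\cV$-compatible and invariant in our sense, then $S=S_\cV$ and Lemma~\ref{lem:ExistenceSolutions} upgrades $\Sol(\sigma,S)\neq\emptyset$ to $\Sol(\sigma^+,S_\cV)\neq\emptyset$, which is Mrozek's condition.

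For part \ref{lem:InvariantDefsEquiEnum2} I would recall that an isolated invariant set in the sense of \citet[Section~7.1]{Mrozek2017} is an invariant set that is locally closed (equivalently, $\Ex S$ is closed) and admits no solution leaving $S$ into its mouth and returning to $S$. The local-closedness clause matches the requirement that $\Ex S$ be closed verbatim, so by part \ref{lem:InvariantDefsEquiEnum1} the task reduces to comparing the isolation clauses once $\cV$-compatibility is available. The crucial observation is that, for a $\cV$-compatible $S$ with $\Ex S$ closed, the flow can never step from $\Ex S$ back into $S$: checking the three cases of Definition~\ref{def:flow}, an element $c\in\Ex S$ of fixed or image type only flows into $\Cl c\subseteq\Ex S$, while a domain-type $c$ flows to $\cV(c)=c^+$, and $c=c^-\in\Ex S$ together with $\cV$-compatibility forbids $c^+\in S$. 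Hence, under $\cV$-compatibility and $\Ex S$ closed, both Mrozek's isolation clause and the forbidden triple of Definition~\ref{def:IsolatedInvariantSet}\ref{def:IsolatedInvariantSetEnum2} are satisfied automatically, which settles the direction $(\Rightarrow)$ after applying part \ref{lem:InvariantDefsEquiEnum1}.

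It remains to handle the one direction where $\cV$-compatibility is not given for free, namely deducing it from the conditions of Definition~\ref{def:IsolatedInvariantSet}\ref{def:IsolatedInvariantSetEnum2}; I expect this to be the main obstacle. Here I would argue that invariance already yields $\sigma\in S\Rightarrow\cV(\sigma)\in S$ for every arrow $\sigma\to\cV(\sigma)$, since $\Pi_\cV(\sigma)=\{\cV(\sigma)\}$ forces any solution through $\sigma$ to continue to $\cV(\sigma)$. For the reverse, suppose $\tau=\cV(\sigma)\in S$ but $\sigma\notin S$; then $\sigma\in\Ex S$, and invariance gives a full solution in $S$ through $\tau$. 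The simplex preceding $\tau$ in it is a predecessor of $\tau$ lying in $S$; analyzing Definition~\ref{def:flow}, such a predecessor is either $\cV^{-1}(\tau)=\sigma$ (excluded, as $\sigma\notin S$) or a strict coface $\mu\supsetneq\tau$, so $\mu\in S$. Since $\sigma$ is then a face of $\mu$ of codimension at least two, one checks $\sigma\in\Pi_\cV(\mu)$ (it lies in $\Cl\mu$ but is not $\cV^{-1}(\mu)$), whence $\mu,\sigma,\tau$ is exactly a forbidden solution $\varrho:\{-1,0,1\}\to K$ through $\Ex S$, contradicting Definition~\ref{def:IsolatedInvariantSet}\ref{def:IsolatedInvariantSetEnum2}.

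This establishes $\cV$-compatibility from the conditions of Definition~\ref{def:IsolatedInvariantSet}\ref{def:IsolatedInvariantSetEnum2}, after which part \ref{lem:InvariantDefsEquiEnum1} yields Mrozek-invariance, the condition that $\Ex S$ be closed yields local closedness, and the no-return observation of the second paragraph supplies Mrozek's isolation clause, closing the direction $(\Leftarrow)$. The only genuinely delicate point, beyond bookkeeping, is recognizing that condition \ref{def:IsolatedInvariantSetEnum2} of Definition~\ref{def:IsolatedInvariantSet} is precisely what forces $\cV$-compatibility on an invariant set that is not assumed $\cV$-compatible a priori, and conversely that $\cV$-compatibility together with local closedness makes the forbidden-triple and no-return conditions redundant.
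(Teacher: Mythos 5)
Your proposal is correct, and it follows the same overall skeleton as the paper's proof: part (1) reduces to establishing $\cV$-compatibility and then invoking Lemma~\ref{lem:ExistenceSolutions}, and part (2) reduces to part (1) plus the equivalence of isolation with ``$\Ex S$ is closed.'' The difference is that the paper delegates the two nontrivial ingredients to citations, whereas you prove them. Specifically, the paper obtains $\cV$-compatibility of a Mrozek-invariant set from Proposition~6.4 of \citet{Mrozek2017}, while you derive it directly from $S\subseteq S_\cV\subseteq S$ using that $\sigma\in\{\sigma^-,\sigma^+\}$; and the paper quotes Proposition~3.4 of \citet{Batko2020} for the equivalence of ``$\cV$-compatible, invariant, $\Ex S$ closed'' with Definition~\ref{def:IsolatedInvariantSet}\ref{def:IsolatedInvariantSetEnum2}, while you re-prove it -- your forbidden-triple argument (the predecessor of $\tau=\cV(\sigma)$ in a solution inside $S$ is either $\sigma$ itself or a coface $\mu\supsetneq\tau$ of codimension $\geq 2$ over $\sigma$, yielding the prohibited solution $\mu,\sigma,\tau$) is exactly the content of that cited proposition, and your three-case check that the flow cannot step from a closed $\Ex S$ back into a $\cV$-compatible $S$ supplies the converse. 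What your route buys is self-containedness; what it costs is that you must recall Mrozek's notion of isolated invariant set accurately: what you state as his ``definition'' (invariant, locally closed, no return through the mouth) is really the characterization given by his Theorem~7.1, which is the form the paper also uses, so nothing breaks -- but it would be cleaner to cite that theorem explicitly rather than present the characterization as the definition.
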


	\begin{proof}
		If $S$ invariant in the sense of \citet{Mrozek2017}, then it is $\cV$-compatible \citep[Proposition 6.4]{Mrozek2017} and since $S_\cV=S$, it follows from Lemma \ref{lem:ExistenceSolutions} that $S$ is invariant in the sense of Definition \ref{def:IsolatedInvariantSet}\ref{def:IsolatedInvariantSetEnum1}. Conversely, if $S$ is $\cV$-compatible and invariant in the sense of Definition \ref{def:IsolatedInvariantSet}\ref{def:IsolatedInvariantSetEnum1}, we again have $S_\cV=S$ and the invariance of $S$ in the sense of \citet{Mrozek2017} follows from Lemma \ref{lem:ExistenceSolutions}.

		The second part of the lemma is shown as follows. From Theorem 7.1 in \citep{Mrozek2017}, $S\subseteq K$ is an isolated invariant set in the sense of \citet{Mrozek2017} if and only if $S$ is invariant in the sense of \citet{Mrozek2017} and $\Ex S$ is closed. From part \ref{lem:InvariantDefsEquiEnum1} of the lemma, $S$ is invariant in the sense of \citet{Mrozek2017} if and only if it is $\cV$-compatible and invariant in the sense of Definition \ref{def:IsolatedInvariantSet}\ref{def:IsolatedInvariantSetEnum1}. Finally, Proposition 3.4 in \citep{Batko2020} states that $S$ is $\cV$-compatible, $S$ is invariant in the sense of Definition \ref{def:IsolatedInvariantSet}\ref{def:IsolatedInvariantSetEnum1} and $\Ex S$ is closed if and only if $S$ is isolated invariant in the sense of Definition \ref{def:IsolatedInvariantSet}\ref{def:IsolatedInvariantSetEnum2}, hence the result.
	\end{proof}

	From Lemma \ref{lem:InvariantDefsEqui}, it follows that most results from the combinatorial dynamics theory defined by \citet{Mrozek2017} may be directly applied within the framework of this article as long as the invariant sets considered here are $\cV$-compatible. In particular, we can define the Conley index of an isolated invariant set.

	\begin{defn}
		Let $S$ be an isolated invariant set of a flow. The \emph{Conley index} of $S$, noted $\Con(S)$, is the (singular) homology of $\Cl S$ relative to $\Ex S$ and the \emph{$p^{\text{th}}$ Conley coefficient} $\beta_p(S)$ of $S$ is the rank of the $p^{\text{th}}$ homology group of $\Con(S)$. We call
		\begin{gather*}
			P_S(t) := \sum_{p\in\N}\beta_p(S)t^p
		\end{gather*}
		the \emph{Conley polynomial} of $S$.
	\end{defn}

	Notice that when considering a flow on a finite simplicial complex $K$, then $K$ is trivially an isolated invariant set and $\Con(K)$ is simply the homology of $K$. The Conley coefficients of $K$ thus coincide with its Betti numbers and its Conley polynomial equals its Poincar{\'e} polynomial.

	\subsection{Morse decompositions}

	For a full solution $\varrho:\Z\rightarrow K$ of a flow $\Pi_\cV$ on $K$, define the \emph{$\alpha$-limit} and the \emph{$\omega$-limit} sets of $\varrho$ as
	\begin{align*}
		\alpha(\varrho) = \bigcap_{k\in\Z}\left\lbrace \varrho(n)\ \big|\ n\leq k\right\rbrace,\qquad \omega(\varrho) = \bigcap_{k\in\Z}\left\lbrace \varrho(n)\ \big|\ k\leq n\right\rbrace.
	\end{align*}

	\begin{defn}[Morse decomposition]\label{def:MorseDecomposition}
		Let $\Pi_\cV$ be a flow on a simplicial complex $K$. Consider a collection $\cM = \{M_r\subseteq K\ |\ r\in\P\}$ indexed by a poset $(\P,\leq)$. We say $\cM$ is a \emph{Morse decomposition} of $\Pi_\cV$ if the following conditions are satisfied:
		\begin{enumerate}
			\item\label{def:MorseDecompositionEnum1} $\cM$ is a collection of mutually disjoint isolated invariant subsets of $K$;
			\item\label{def:MorseDecompositionEnum2} for every full solution $\varrho$ of $\Pi_\cV$, there are some $r,r'\in\P$ such that $r\leq r'$ for which $\alpha(\varrho)\subseteq M_{r'}$ and $\omega(\varrho)\subseteq M_r$;
			\item\label{def:MorseDecompositionEnum3} if there exists a full solution $\varrho$ of $\Pi_\cV$ such that $\alpha(\varrho)\cup\omega(\varrho)\subseteq M_r$ for some $r\in\P$, then $\image\varrho\subseteq M_r$.
		\end{enumerate}
	\end{defn}

	Essentially, the sets $M_r$ of a Morse decomposition $\cM$ are the starting and ending sets of every solution of $\Pi_\cV:K\multimap K$. Thus, we can see a Morse decomposition as an object describing the different connections that exist in a flow.

	\begin{ex}\label{ex:MorseDecomposition}
		Consider the combinatorial field $\cV$ from Figure \ref{fig:MorseDecompositionM}. Then, the collection
		\begin{gather*}
			\cM = \big\lbrace\{D\}, \{F, FG, G, GJ, J, FJ\}, \{DE\}, \{HI\}, \{BC\}, \{ADE\}, \{BEF\}, \{FGJ\}\big\rbrace
		\end{gather*}
		indexed by $(\P,\leq)$ as depicted in Figure \ref{fig:MorseDecompositionP} is a Morse decomposition of $\Pi_\cV$.

		\begin{figure}[h]
			\centering
			\begin{subfigure}[b]{0.55\textwidth}
				\centering
				\begin{tikzpicture}[scale=1]
					\coordinate (D) at (0,0);
					\coordinate (A) at (1,1.732);
					\coordinate (H) at (1,-1.732);
					\coordinate (E) at (2,0);
					\coordinate (B) at (3,1.732);
					\coordinate (I) at (3,-1.732);
					\coordinate (F) at (4,0);
					\coordinate (C) at (5,1.732);
					\coordinate (J) at (5,-1.732);
					\coordinate (G) at (6,0);

					\coordinate (AB) at ($0.5*(A)+0.5*(B)$);
					\coordinate (BC) at ($0.5*(B)+0.5*(C)$);
					\coordinate (AD) at ($0.5*(A)+0.5*(D)$);
					\coordinate (AE) at ($0.5*(A)+0.5*(E)$);
					\coordinate (BE) at ($0.5*(B)+0.5*(E)$);
					\coordinate (BF) at ($0.5*(B)+0.5*(F)$);
					\coordinate (CF) at ($0.5*(C)+0.5*(F)$);
					\coordinate (CG) at ($0.5*(C)+0.5*(G)$);
					\coordinate (DE) at ($0.5*(D)+0.5*(E)$);
					\coordinate (EF) at ($0.5*(E)+0.5*(F)$);
					\coordinate (FG) at ($0.5*(F)+0.5*(G)$);
					\coordinate (DH) at ($0.5*(D)+0.5*(H)$);
					\coordinate (EH) at ($0.5*(E)+0.5*(H)$);
					\coordinate (EI) at ($0.5*(E)+0.5*(I)$);
					\coordinate (FI) at ($0.5*(F)+0.5*(I)$);
					\coordinate (FJ) at ($0.5*(F)+0.5*(J)$);
					\coordinate (GJ) at ($0.5*(G)+0.5*(J)$);
					\coordinate (HI) at ($0.5*(H)+0.5*(I)$);
					\coordinate (IJ) at ($0.5*(I)+0.5*(J)$);

					\coordinate (ADE) at ($0.333*(A)+0.333*(D)+0.333*(E)$);
					\coordinate (ABE) at ($0.333*(A)+0.333*(B)+0.333*(E)$);
					\coordinate (BEF) at ($0.333*(B)+0.333*(E)+0.333*(F)$);
					\coordinate (BCF) at ($0.333*(B)+0.333*(C)+0.333*(F)$);
					\coordinate (CFG) at ($0.333*(C)+0.333*(F)+0.333*(G)$);
					\coordinate (DEH) at ($0.333*(D)+0.333*(E)+0.333*(H)$);
					\coordinate (EHI) at ($0.333*(E)+0.333*(H)+0.333*(I)$);
					\coordinate (EFI) at ($0.333*(E)+0.333*(F)+0.333*(I)$);
					\coordinate (FIJ) at ($0.333*(F)+0.333*(I)+0.333*(J)$);
					\coordinate (FGJ) at ($0.333*(F)+0.333*(G)+0.333*(J)$);

					\fill[black!5] (A) -- (C) -- (G) -- (J) -- (H) -- (D) -- cycle;

					\fill[red!50] (D) circle (5pt);
					\fill[blue!50] ($(D)!\ec!(DE) + (\wc,\wc)$) arc(90:270:\wc)  -- ($(E)!\ec!(DE) + (-\wc,-\wc)$) arc(-90:90:\wc) -- cycle;
					\fill[orange!50] ($(A)!\cc!(ADE)$) -- ($(D)!\cc!(ADE)$)  -- ($(E)!\cc!(ADE)$) -- cycle;
					\fill[green!50] ($(E)!\cc!(BEF)$) -- ($(F)!\cc!(BEF)$)  -- ($(B)!\cc!(BEF)$) -- cycle;
					\fill[pink] ($(B)!\ec!(BC) + (\wc,\wc)$) arc(90:270:\wc)  -- ($(C)!\ec!(BC) + (-\wc,-\wc)$) arc(-90:90:\wc) -- cycle;
					\fill[yellow] ($(H)!\ec!(HI) + (\wc,\wc)$) arc(90:270:\wc)  -- ($(I)!\ec!(HI) + (-\wc,-\wc)$) arc(-90:90:\wc) -- cycle;
					\fill[cyan!50] ($(F) + (0,\wc)$) arc(90:210:\wc) -- +($(J)-(F)$) arc(210:330:\wc) -- +($(G)-(J)$) arc(-30:90:\wc) -- cycle;
					\fill[violet!50] ($(F)!\cc!(FGJ)$) -- ($(G)!\cc!(FGJ)$)  -- ($(J)!\cc!(FGJ)$) -- cycle;

					\draw (A) -- (C) -- (G) -- (J) -- (H) -- (D) -- cycle;
					\draw (D) -- (G)  (A) -- (I) -- (C)  (H) -- (B) -- (J);



					\node at (D){\LARGE $\bullet$};
					\node at (DE){\LARGE $\bullet$};
					\node at (ADE){\LARGE $\bullet$};
					\node at (BEF){\LARGE $\bullet$};
					\node at (BC){\LARGE $\bullet$};
					\node at (HI){\LARGE $\bullet$};
					\node at (FGJ){\LARGE $\bullet$};

					\draw[ultra thick, -latex] (A) -- (AD);
					\draw[ultra thick, -latex] (AB) -- (ABE);
					\draw[ultra thick, -latex] (B) -- (BE);
					\draw[ultra thick, -latex] (C) -- (CG);
					\draw[ultra thick, -latex] (BF) -- (BCF);
					\draw[ultra thick, -latex] (CF) -- (CFG);
					\draw[ultra thick, -latex] (E) -- (AE);
					\draw[ultra thick, -latex] (EF) -- (EFI);
					\draw[ultra thick, -latex] (F) -- (FJ);
					\draw[ultra thick, -latex] (G) -- (FG);
					\draw[ultra thick, -latex] (EH) -- (DEH);
					\draw[ultra thick, -latex] (EI) -- (EHI);
					\draw[ultra thick, -latex] (FI) -- (FIJ);
					\draw[ultra thick, -latex] (H) -- (DH);
					\draw[ultra thick, -latex] (I) -- (IJ);
					\draw[ultra thick, -latex] (J) -- (GJ);

					\node[xshift=11pt, yshift=-6pt] at (A) {$A$};
					\node[xshift=11pt, yshift=-6pt] at (B) {$B$};
					\node[xshift=11pt, yshift=-6pt] at (C) {$C$};
					\node[xshift=11pt, yshift=6pt] at (D) {$D$};
					\node[xshift=11pt, yshift=6pt] at (E) {$E$};
					\node[xshift=11pt, yshift=6pt] at (F) {$F$};
					\node[xshift=11pt, yshift=6pt] at (G) {$G$};
					\node[xshift=11pt, yshift=6pt] at (H) {$H$};
					\node[xshift=11pt, yshift=6pt] at (I) {$I$};
					\node[xshift=11pt, yshift=6pt] at (J) {$J$};

				\end{tikzpicture}
				\caption{}\label{fig:MorseDecompositionM}
			\end{subfigure}
			\begin{subfigure}[b]{0.37\textwidth}
				\centering
				\begin{tikzpicture}[scale=1]
					\draw[white, opacity=0] (8,-1.732-\wc) rectangle (11, 1.732+\wc);

					\begin{scope}[every node/.style={rectangle,thick,draw}]
						\node[fill=red!50] (M1) at (9,-1.5) {$1$};
						\node[fill=cyan!50] (M2) at (11,-1.5) {$2$};
						\node[fill=blue!50] (M3) at (8,0) {$3$};
						\node[fill=yellow] (M4) at (9,0) {$4$};
						\node[fill=pink] (M5) at (10,0) {$5$};
						\node[fill=orange!50] (M6) at (8,1.5) {$6$};
						\node[fill=green!50] (M7) at (9.5,1.5) {$7$};
						\node[fill=violet!50] (M8) at (11,1.5) {$8$};
					\end{scope}

					\begin{scope}[>={latex},
						every edge/.style={draw,very thick}]
						\path [->] (M3) edge (M1);
						\path [->] (M4) edge (M1);
						\path [->] (M5) edge (M1);
						\path [->] (M4) edge (M2);
						\path [->] (M5) edge (M2);
						\path [->] (M8) edge (M2);
						\path [->] (M6) edge (M3);
						\path [->] (M7) edge (M3);
						\path [->] (M7) edge (M4);
						\path [->] (M7) edge (M5);
					\end{scope}
				\end{tikzpicture}
				\caption{}\label{fig:MorseDecompositionP}
			\end{subfigure}
			\caption{In \subref{fig:MorseDecompositionM}, a discrete vector field $\cV$. The dots identify the fixed points of $\cV$, and each color represents a Morse set of $\cM$ as defined in Example \ref{ex:MorseDecomposition}. In \subref{fig:MorseDecompositionP}, the Hasse diagram of the poset $\P$ used to index $\cM$ }\label{fig:MorseDecomposition}
		\end{figure}
	\end{ex}

	Moreover, information on the homology of a simplicial complex $K$ can be deduced from a Morse decomposition defined on $K$. Indeed, a Morse decomposition leads to a Morse equation, from which are derived the famous Morse inequalities. The following proposition is an adaptation of Theorem 9.11 in \citep{Mrozek2017}.

	\begin{prop}\label{prop:MorseEquationMorseDecomposition}
		Let $\cM=\{M_r\ |\ r\in\P\}$ be a Morse decomposition of a simplicial complex $K$. Then,
		\begin{gather*}
			\sum_{r\in\P}P_{M_r}(t) = P_K(t) + (1+t)Q(t)
		\end{gather*}
		for some polynomial $Q(t)$ with non-negative coefficients.
	\end{prop}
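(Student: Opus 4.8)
The plan is to deduce the identity from Theorem 9.11 of \citep{Mrozek2017} by transporting the hypotheses across the dictionary established in Lemma \ref{lem:InvariantDefsEqui}. First I would observe that each Morse set $M_r$ is, by Definition \ref{def:MorseDecomposition}\ref{def:MorseDecompositionEnum1}, an isolated invariant set in the sense of Definition \ref{def:IsolatedInvariantSet}\ref{def:IsolatedInvariantSetEnum2}; by Lemma \ref{lem:InvariantDefsEqui} (whose proof shows, via Proposition 3.4 of \citep{Batko2020}, that such a set is $\cV$-compatible), each $M_r$ is then also an isolated invariant set in the sense of \citep{Mrozek2017}. Consequently the Conley index $\Con(M_r)=H_*(\Cl M_r,\Ex M_r)$ coincides with the one used in \citep{Mrozek2017}, so the Conley polynomials $P_{M_r}(t)$ on the left-hand side denote the same objects in both frameworks.

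Next I would verify that $\cM$, indexed by $(\P,\le)$, is a Morse decomposition in the sense of \citep{Mrozek2017}. Mutual disjointness and isolated invariance of the $M_r$ supply the first requirement, while the conditions on the $\alpha$- and $\omega$-limit sets in Definition \ref{def:MorseDecomposition}\ref{def:MorseDecompositionEnum2}--\ref{def:MorseDecompositionEnum3} match Mrozek's: the two theories share the same notion of full solution and of limit set, and the convention $r\le r'$ with $\omega(\varrho)\subseteq M_r$, $\alpha(\varrho)\subseteq M_{r'}$ is exactly the flow-compatible ordering. The equivalence $\Sol(\sigma,A)\neq\emptyset\Leftrightarrow\Sol(\sigma^+,A)\neq\emptyset$ from Lemma \ref{lem:ExistenceSolutions}, applied to the $\cV$-compatible sets $M_r$, is what reconciles the solution-existence formulations of the two invariance notions.

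Finally I would apply Theorem 9.11 of \citep{Mrozek2017} to the whole complex $K$, which is itself trivially an isolated invariant set with $\Con(K)$ equal to the homology of $K$, so that $P_K(t)$ is the Poincar{\'e} polynomial of $K$ as noted after the definition of the Conley index. The theorem then yields exactly
\begin{gather*}
	\sum_{r\in\P} P_{M_r}(t) = P_K(t) + (1+t)Q(t),
\end{gather*}
where the non-negativity of the coefficients of $Q(t)$ is guaranteed by the algebraic mechanism behind the theorem: an index filtration attached to a linear extension of $(\P,\le)$ produces long exact sequences of Conley indices whose Poincar{\'e} polynomials combine, through the standard counting lemma for exact sequences, into an identity with a $(1+t)$-divisible remainder of non-negative coefficients.

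The step I expect to be the main obstacle is the second one: rigorously identifying Definition \ref{def:MorseDecomposition} with Mrozek's definition of a Morse decomposition. The two frameworks phrase invariance through $\cV$-compatible cores $S_\cV$ versus direct existence of solutions, and one must confirm that the limit-set conditions transfer without loss and that no solution is overlooked when passing between the flow $\Pi_\cV$ and the multivector-field dynamics of \citep{Mrozek2017}. Lemmas \ref{lem:ExistenceSolutions} and \ref{lem:InvariantDefsEqui} are precisely the tools needed to close this gap; once the two notions of Morse decomposition are matched, the equation is Mrozek's verbatim and nothing further is required.
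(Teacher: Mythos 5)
Your proposal is correct and follows essentially the same route as the paper, which states the proposition as an adaptation of Theorem 9.11 of \citep{Mrozek2017} and relies implicitly on the dictionary of Lemma \ref{lem:InvariantDefsEqui} to transfer the notions of isolated invariant set and Morse decomposition between the two frameworks. You simply make explicit the translation step that the paper leaves to the reader.
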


	\begin{cor}\label{coro:MorseInequalitiesMorseDecomposition}
		Let $\cM=\{M_r\ |\ r\in\P\}$ be a Morse decomposition of a simplicial complex $K$ with $\dim K = n$. Let
		\begin{gather*}
			m_p := \sum_{r\in\P}\beta_p(M_r).
		\end{gather*}
		For all $p=0,1,...,n$, we have
		\begin{enumerate}
			\item the strong Morse inequalities:
			\begin{gather*}
				m_p - m_{p-1} + \cdots +(-1)^p m_0 \geq \beta_p(K) - \beta_{p-1}(K) + \cdots +(-1)^p \beta_0(K);
			\end{gather*}
			\item the weak Morse inequalities:
			\begin{gather*}
				m_p \geq \beta_p(K);
			\end{gather*}
			\item an alternative expression for the Euler characteristic $\chi(K)$ of $K$:
			\begin{gather*}
				m_0 - m_1 + \cdots +(-1)^n m_n = \beta_0(K) - \beta_1(K) + \cdots +(-1)^n \beta_n(K) = \chi(K)
			\end{gather*}
		\end{enumerate}
	\end{cor}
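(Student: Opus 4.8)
The plan is to extract all three statements directly from the Morse equation of Proposition \ref{prop:MorseEquationMorseDecomposition} by elementary polynomial bookkeeping; no further topological input is needed. First I would write $Q(t) = \sum_{p}q_p t^p$ with all $q_p\geq 0$ (and $q_p = 0$ for $p<0$, while $q_p=0$ for $p\geq n$ since the degrees of $\sum_r P_{M_r}$ and $P_K$ are at most $n$). Expanding $(1+t)Q(t) = \sum_p (q_p+q_{p-1})t^p$ and comparing the coefficient of $t^p$ on both sides of $\sum_{r\in\P}P_{M_r}(t)=P_K(t)+(1+t)Q(t)$, and noting that the coefficient of $t^p$ on the left is exactly $m_p=\sum_{r\in\P}\beta_p(M_r)$, yields the key identity
\begin{equation*}
m_p = \beta_p(K) + q_p + q_{p-1}\qquad(p=0,1,\dots,n),
\end{equation*}
with the convention $q_{-1}=0$.

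The weak inequalities are then immediate: since $q_p,q_{p-1}\geq 0$, the key identity gives $m_p\geq\beta_p(K)$ at once. For the strong inequalities I would form the alternating partial sum $\sum_{j=0}^{p}(-1)^{p-j}m_j$ and substitute the key identity term by term. The part coming from the $\beta_j(K)$ reproduces the right-hand side of the claimed inequality, while the part coming from the $q$-terms telescopes: writing out $\sum_{j=0}^{p}(-1)^{p-j}(q_j+q_{j-1})$ and reindexing the second summand, every term cancels in pairs except a single surviving $q_p$. Hence
\begin{equation*}
\sum_{j=0}^{p}(-1)^{p-j}m_j = \sum_{j=0}^{p}(-1)^{p-j}\beta_j(K) + q_p \geq \sum_{j=0}^{p}(-1)^{p-j}\beta_j(K),
\end{equation*}
the inequality using $q_p\geq 0$; this is precisely the strong Morse inequality of index $p$.

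Finally, for the Euler characteristic identity I would evaluate the Morse equation at $t=-1$. The factor $(1+t)$ annihilates the error term $(1+t)Q(t)$, leaving $\sum_{r\in\P}P_{M_r}(-1)=P_K(-1)$, that is $\sum_{p}(-1)^p m_p=\sum_p(-1)^p\beta_p(K)$; the right-hand side is the Euler characteristic $\chi(K)$, since the $\beta_p(K)$ are the Betti numbers of $K$ (as observed after the definition of the Conley polynomial). I expect no serious obstacle here. The only points requiring care are the telescoping computation for the strong inequalities and the boundary conventions $q_{-1}=0$ and $q_p=0$ for $p\geq n$, which ensure the sums are finite and the index shifts align correctly.
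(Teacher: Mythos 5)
Your proof is correct and follows essentially the same route as the paper: the paper compares coefficients after multiplying the Morse equation by $(1+t)^{-1}=1-t+t^2-\cdots$, which is exactly your telescoping of the alternating partial sums, and it likewise obtains the Euler characteristic identity by substituting $t=-1$. The only cosmetic difference is that you read the weak inequalities directly off the identity $m_p=\beta_p(K)+q_p+q_{p-1}$ rather than deducing them from the strong ones.
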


	\begin{proof}
		The proof is similar to that of Theorem 9.12 in \citep{Mrozek2017}. Recall that $P_K(t)=\sum_{p=0}^n\beta_p(K)t^p$. To prove the strong Morse inequalities, it suffices to compare the coefficients of the polynomials obtained by multiplying each side of the equation in Proposition \ref{prop:MorseEquationMorseDecomposition} by $(1+t)^{-1} = 1-t+t^2-\cdots$. The weak Morse inequalities follow. Also, we have the last equality by substituting $t=-1$ in the equation of Proposition \ref{prop:MorseEquationMorseDecomposition}.
	\end{proof}

	\subsection{Basic sets and acyclic flows}

	Let $\Pi_\cV:K\multimap K$ be a flow on a simplicial complex and for two simplices $\sigma,\tau\in K$, we write $\sigma\leftrightconnects{\cV}\tau$ when $\sigma\rightconnects{\cV}\tau$ and $\sigma\leftconnects{\cV}\tau$. Consider the \emph{chain recurrent set} of $\Pi_\cV$ defined as
	\begin{gather*}
		\cR = \left\lbrace \sigma\in K\ |\ \sigma\leftrightconnects{\cV}\sigma\right\rbrace.
	\end{gather*}
	In $\cR$, $\leftrightconnects{\cV}$ may be seen as an equivalence relation. A \emph{basic set} of $\Pi_\cV$ is an equivalence class of $\leftrightconnects{\cV}$ in $\cR$.

	\begin{thm}[Theorems 9.2 and 9.3 in \citep{Mrozek2017}]\label{theo:BasicMinimalMorseDecomposition}
		Let $\cB$ be the collection of basic sets of a flow $\Pi_\cV:K\multimap K$. Consider the relation $\leq$ on $\cB$ such that for all $B,B'\in\cB$,
		\begin{align*}
			B\leq B' \quad\Leftrightarrow\quad B\leftconnects{\cV}B'.
		\end{align*}
		The relation $\leq$ is a partial order on $\cB$ making it a Morse decomposition.

		Moreover, $\cB$ is the finest Morse decomposition of $\Pi_\cV$, meaning that for any Morse decomposition $\cM$ of $\Pi_\cV$, for each $B\in\cB$, there exists a $M\in\cM$ such that $B\subseteq M$.
	\end{thm}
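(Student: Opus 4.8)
The plan is to treat the three assertions separately and to lean on the dictionary between our flow-based notions and those of \citet{Mrozek2017} only where the genuinely topological content (isolation) is needed; everything about the order and the limit sets can be handled by bookkeeping of connections. I would first dispatch the partial order claim purely combinatorially. Reflexivity is immediate, since every $\sigma$ in a basic set $B$ lies in $\cR$, so $\sigma\leftrightconnects{\cV}\sigma$ and in particular $B\leftconnects{\cV}B$. For transitivity, given $B\leq B'\leq B''$, I would concatenate a solution from $B''$ to $B'$ with one from $B'$ to $B$, bridging the two intermediate endpoints inside $B'$ by the fact that any two simplices of a single basic set are mutually connected (they are $\leftrightconnects{\cV}$-equivalent). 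Antisymmetry is the same idea once more: if $B\leftconnects{\cV}B'$ and $B'\leftconnects{\cV}B$, then chaining the two solutions through the internal connections of $B$ and $B'$ exhibits a representative of $B$ and one of $B'$ satisfying $\leftrightconnects{\cV}$, forcing $B=B'$ because basic sets are equivalence classes.

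Second, to see that $\cB$ is a Morse decomposition I would verify the three conditions of Definition \ref{def:MorseDecomposition}. Mutual disjointness is automatic. For $B\in\cB$ being an isolated invariant set, I would check invariance directly: for $\sigma\in B$ the relation $\sigma\leftrightconnects{\cV}\sigma$ produces forward and backward loops whose concatenation is a full solution through $\sigma$, and every simplex on such a loop is $\leftrightconnects{\cV}$-equivalent to $\sigma$, hence stays in $B$, so $\Sol(\sigma,B)\neq\emptyset$. I would then show $B$ is $\cV$-compatible: for a pair $\sigma,\cV(\sigma)$ with $\sigma\in\dom\cV\setminus\Fix\cV$, membership of either in $B$ forces the other, using $\Pi_\cV(\sigma)=\{\cV(\sigma)\}$ in one direction and Lemma \ref{lem:IfConnectsCoFaceConnects}\ref{lem:IfConnectsCoFaceConnectsEnum2} (applied to $\cV(\sigma)\rightconnects{\cV}\cV(\sigma)$ with the facet $\sigma\subset\cV(\sigma)$) in the other. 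With $\cV$-compatibility and invariance established, Lemma \ref{lem:InvariantDefsEqui} transports Mrozek's conclusion that basic sets are isolated invariant sets into the sense of Definition \ref{def:IsolatedInvariantSet}, which is the clean way to obtain that $\Ex B$ is closed. The two limit-set conditions I would settle from the finiteness of $K$: in any full solution the simplices recurring infinitely often as $n\to-\infty$ (resp. $n\to+\infty$) are pairwise mutually connected by interleaving their occurrences, so $\alpha(\varrho)$ and $\omega(\varrho)$ each lie in a single basic set; the solution itself is a connection from the $\alpha$-class down to the $\omega$-class, giving the comparability required in \ref{def:MorseDecompositionEnum2}, and if both limits lie in one $B$ then any $\eta=\varrho(j)$ is squeezed between a past visit and a future visit to $B$, whence $\eta\leftrightconnects{\cV}$-equivalent to them and $\image\varrho\subseteq B$, which is \ref{def:MorseDecompositionEnum3}.

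Finally, for minimality I would take an arbitrary Morse decomposition $\cM$ and a basic set $B$ and show all of $B$ lies in one member of $\cM$. Any two simplices of $B$ sit on a common closed solution (concatenate the two connecting solutions), which extends to a periodic full solution $\varrho$ with $\alpha(\varrho)=\omega(\varrho)=\image\varrho$ containing both; condition \ref{def:MorseDecompositionEnum2} places this orbit in some $M_{r'}$ and some $M_r$, and disjointness forces $M_r=M_{r'}$, so the two simplices share a Morse set. Transitivity of ``lying in the same $M$'' across the $\leftrightconnects{\cV}$-connected set $B$ then yields $B\subseteq M$. I expect the real obstacle to be the isolation half of ``isolated invariant set'' — that $\Ex B$ is closed and that no solution leaves and re-enters $B$ through its exit set — which does not follow from connection bookkeeping alone; my intent is to secure it by the reduction to \citet{Mrozek2017} via Lemma \ref{lem:InvariantDefsEqui} rather than reprove it from scratch.
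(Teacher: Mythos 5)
Your proposal is correct, but it is worth pointing out that the paper itself does not prove this statement at all: the theorem is imported verbatim from Theorems 9.2 and 9.3 of \citet{Mrozek2017}, its validity in the present framework resting on the remark after Definition \ref{def:flow} that the two notions of flow coincide for vector fields and on the translation Lemma \ref{lem:InvariantDefsEqui}. Your argument is therefore a genuinely different, essentially self-contained route, and every step checks out: the order axioms follow from concatenating solutions through the internal mutual connections of a basic set; invariance follows from periodizing a closed solution through $\sigma\leftrightconnects{\cV}\sigma$ and observing that every simplex on such a loop is equivalent to $\sigma$; the limit-set conditions follow from the fact that, by finiteness of $K$, the simplices recurring infinitely often in either direction are pairwise mutually connected; and minimality follows from applying conditions \ref{def:MorseDecompositionEnum2} and \ref{def:MorseDecompositionEnum3} of Definition \ref{def:MorseDecomposition} to a periodic orbit through two given elements of a basic set. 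The one place where you still lean on \citet{Mrozek2017} --- isolation --- can in fact also be done directly with the paper's own tools, making the proof fully independent of the citation: if $\beta\subseteq\alpha\subseteq\gamma$ with $\beta,\gamma\in B$ and $\alpha\notin B$, then Lemma \ref{lem:IfConnectsCoFaceConnects}\ref{lem:IfConnectsCoFaceConnectsEnum2} applied to $\gamma\rightconnects{\cV}\gamma$ gives $\gamma\rightconnects{\cV}\alpha$, and Lemma \ref{lem:IfConnectsCoFaceConnects}\ref{lem:IfConnectsCoFaceConnectsEnum1} applied to $\beta\rightconnects{\cV}\gamma$ gives $\alpha\rightconnects{\cV}\gamma$, forcing $\alpha\in B$, a contradiction; this shows $\Ex B=\Cl B\backslash B$ is closed, and the same squeezing argument rules out a solution $\varrho(-1),\varrho(1)\in B$ with $\varrho(0)\in\Ex B$. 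In short, your proof is sound and buys a self-contained verification where the paper settles for a citation; the citation buys brevity at the cost of making the reader trust the dictionary between the two frameworks.
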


	The Morse decomposition of a flow in basic sets essentially consists of fixed points and cycles. For instance, the Morse decomposition from Example \ref{ex:MorseDecomposition} is the finest that exists for the given flow. Hence, we deduce that the basic sets for a flow $\Pi_\cV$ with no cycle are exactly the fixed points of $\cV$.

	More formally, we say a flow $\Pi_\cV:K\multimap K$ is \emph{acyclic} if, for all $\sigma,\tau\in K$, $\sigma\leftrightconnects{\cV}\tau$ implies $\sigma = \tau$. When $\Pi_\cV$ is acyclic, we see that $\cR=\Fix\cV$ and the following result is shown in a straightforward manner.

	\begin{prop}\label{prop:BasicSetsAcyclicField}
		Let $\Pi_\cV:K\multimap K$ be an acyclic flow. The collection $\cB$ of basic sets of $\Pi_\cV$ is
		\begin{gather*}
			\cB = \big\lbrace\,\{\sigma\}\subseteq K\ |\ \sigma\in\Fix\cV\big\rbrace.
		\end{gather*}
	\end{prop}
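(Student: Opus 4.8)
The plan is to prove the equality of collections by first identifying the chain recurrent set $\cR$ with $\Fix\cV$, and then observing that acyclicity forces every equivalence class of $\leftrightconnects{\cV}$ in $\cR$ to be a singleton. The description of $\cB$ then follows immediately.

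For the inclusion $\Fix\cV\subseteq\cR$, I would note that if $\sigma\in\Fix\cV$ then $\Pi_\cV(\sigma)=\Cl\sigma\ni\sigma$, so the constant map $\varrho(0)=\varrho(1)=\sigma$ is a nontrivial closed solution. This yields both $\sigma\rightconnects{\cV}\sigma$ and $\sigma\leftconnects{\cV}\sigma$, hence $\sigma\leftrightconnects{\cV}\sigma$ and $\sigma\in\cR$.

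The reverse inclusion $\cR\subseteq\Fix\cV$ is the crux. Given $\sigma\in\cR$, I would pick a nontrivial closed solution $\varrho:\{0,\dots,n\}\to K$ with $\varrho(0)=\varrho(n)=\sigma$ and $n\geq 1$, and set $\tau:=\varrho(1)\in\Pi_\cV(\sigma)$. When $n\geq 2$, the single first step gives $\sigma\rightconnects{\cV}\tau$ while the restriction $\varrho|_{\{1,\dots,n\}}$ gives $\tau\rightconnects{\cV}\sigma$, so $\sigma\leftrightconnects{\cV}\tau$ and acyclicity forces $\tau=\sigma$; when $n=1$ we already have $\tau=\sigma$. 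Either way $\sigma\in\Pi_\cV(\sigma)$. Consulting Definition \ref{def:flow}, if $\sigma\in\dom\cV\backslash\Fix\cV$ then $\Pi_\cV(\sigma)=\{\cV(\sigma)\}$ with $\cV(\sigma)\neq\sigma$, and if $\sigma\in\image\cV\backslash\Fix\cV$ then $\Pi_\cV(\sigma)\subseteq\Ex\sigma=\Cl\sigma\backslash\{\sigma\}$; in both cases $\sigma\notin\Pi_\cV(\sigma)$, a contradiction. Hence $\sigma\in\Fix\cV$.

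Finally, with $\cR=\Fix\cV$ established, the basic sets are by definition the $\leftrightconnects{\cV}$-classes in $\Fix\cV$. Since acyclicity means that $\sigma\leftrightconnects{\cV}\tau$ implies $\sigma=\tau$, no two distinct simplices share a class, so each class is a singleton $\{\sigma\}$ with $\sigma\in\Fix\cV$, which gives the claimed formula for $\cB$. I expect the main obstacle to be the middle step: the key realization is that a nontrivial closed solution passing through a non-fixed simplex must visit a genuinely distinct simplex that is two-way connected to $\sigma$, which is precisely what acyclicity of the flow forbids, thereby collapsing every recurrent point onto a fixed point of $\cV$.
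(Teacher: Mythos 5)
Your proof is correct and follows exactly the route the paper has in mind: the paper merely remarks that ``when $\Pi_\cV$ is acyclic, we see that $\cR=\Fix\cV$'' and leaves the rest as straightforward, and your argument supplies precisely the missing details (the constant solution showing $\Fix\cV\subseteq\cR$, the two-way connection to $\tau=\varrho(1)$ forcing $\sigma\in\Pi_\cV(\sigma)$ and hence $\sigma\in\Fix\cV$, and the collapse of each $\leftrightarrowtriangle_\cV$-class to a singleton). No gaps.
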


	Furthermore, it is worth noting that the acyclicity of $\cV$ is equivalent to that of the associated flow $\Pi_\cV$. Indeed, we know that gradient fields of discrete Morse functions are acyclic \citep{Forman1998}, so this result implies that acyclic combinatorial flows play an essential role in the study of the dynamics of these functions. This next proposition was not shown explicitly in previous works, but it follows from results in \citep{DesjardinsCote2020}.

	\begin{prop}\label{prop:AcyclictyFieldEquiFlow}
		A discrete vector field $\cV:K\nrightarrow K$ is acyclic if and only if its associated flow $\Pi_\cV:K\multimap K$ is also acyclic.
	\end{prop}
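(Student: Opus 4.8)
The plan is to prove both implications by contraposition, setting up in each direction a dictionary between nontrivial closed $\cV$-paths and \emph{cyclic connections} of the flow, i.e.\ pairs of distinct simplices $\sigma\neq\tau$ with $\sigma\leftrightconnects{\cV}\tau$. Recall that $\Pi_\cV$ fails to be acyclic precisely when such a pair exists, and $\cV$ fails to be acyclic precisely when a nontrivial closed $\cV$-path exists. The recurring structural facts I will use are that $\dom\cV\cap\image\cV=\Fix\cV$, that $\Pi_\cV(\sigma)=\{\cV(\sigma)\}$ forces an ``up-step'' of dimension $+1$ whenever $\sigma\in\dom\cV\setminus\Fix\cV$, and that every other step of a solution either fixes the simplex (only possible at a point of $\Fix\cV$) or passes to a proper face, hence strictly lowers the dimension.

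First I would treat the easy direction. A nontrivial closed $\cV$-path $\alpha_0^{(p)},\beta_0^{(p+1)},\dots,\alpha_n^{(p)}=\alpha_0$ is read directly as a solution $\varrho$ of $\Pi_\cV$ by setting $\varrho(2i)=\alpha_i$ and $\varrho(2i+1)=\beta_i$. Here each $\alpha_i\in\dom\cV\setminus\Fix\cV$ gives $\Pi_\cV(\alpha_i)=\{\beta_i\}$, while $\beta_i=\cV(\alpha_i)\in\image\cV\setminus\Fix\cV$ (if $\beta_i$ were fixed, injectivity would force $\alpha_i=\beta_i$) gives $\Pi_\cV(\beta_i)=\Ex\beta_i\setminus\{\alpha_i\}$, which contains the facet $\alpha_{i+1}\neq\alpha_i$. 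Thus $\varrho$ is a nontrivial closed solution, so $\alpha_0\rightconnects{\cV}\beta_0$ and $\beta_0\rightconnects{\cV}\alpha_0$; since $\dim\alpha_0\neq\dim\beta_0$ this is a cyclic connection and $\Pi_\cV$ is not acyclic.

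The substance is in the converse. Starting from a cyclic connection $\sigma\leftrightconnects{\cV}\tau$ with $\sigma\neq\tau$, I would concatenate the two witnessing solutions into a single nontrivial closed solution $\varrho:\{0,\dots,N\}\to K$ that visits two distinct simplices, hence is non-constant. A short argument shows such a $\varrho$ must contain at least one up-step, for otherwise the dimension is non-increasing and returns to its starting value, forcing it to be constant, and then, since down-steps strictly lower it, $\varrho$ would be a constant solution. I would then list the up-steps $(\alpha_j,\beta_j)$ with $\beta_j=\cV(\alpha_j)$ in cyclic order and observe that between $\beta_j$ and the next up-step source $\alpha_{j+1}$ every step fixes the simplex or lowers its dimension, so $\alpha_{j+1}$ is a face of $\beta_j$, proper because $\dom\cV\cap\image\cV=\Fix\cV$ keeps $\alpha_{j+1}$ and $\beta_j$ distinct; in particular $\dim\alpha_{j+1}\le\dim\beta_j-1=\dim\alpha_j$.

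The crux, and the step I expect to be the main obstacle, is upgrading this raw sequence into a genuine Forman $\cV$-path, which must remain at two consecutive dimensions. The key observation is that the inequalities $\dim\alpha_{j+1}\le\dim\alpha_j$ run around a cycle and close up, since $\alpha_m=\alpha_0$, so they are forced to be equalities: every $\alpha_j$ lives in a single dimension $p$ and every $\beta_j$ in dimension $p+1$. Consequently each $\alpha_{j+1}$ is a \emph{facet} of $\beta_j$, the intervening descent collapses to the single transition $\beta_j\to\alpha_{j+1}$ allowed by $\Pi_\cV(\beta_j)=\Ex\beta_j\setminus\{\alpha_j\}$, and in particular $\alpha_{j+1}\neq\alpha_j$. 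The sequence $\alpha_0,\beta_0,\alpha_1,\dots,\alpha_m=\alpha_0$ is then exactly a nontrivial closed $\cV$-path, so $\cV$ is not acyclic. The delicate points I would verify carefully are that two distinct up-steps cannot be adjacent (again from $\dom\cV\cap\image\cV=\Fix\cV$) and that the cyclic wrap-around descent from $\beta_{m-1}$ back to $\alpha_0$ is legitimate, which is guaranteed by the closedness $\varrho(N)=\varrho(0)$.
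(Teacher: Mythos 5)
Your proof is correct, and its overall architecture (contraposition in both directions, translating between nontrivial closed $\cV$-paths and closed solutions of $\Pi_\cV$) is the same as the paper's. The difference lies in how the converse direction is justified. The paper takes the closed solution $\varrho$ witnessing $\sigma\leftrightconnects{\cV}\tau$ and simply \emph{cites} Proposition 5.3 and Lemma 5.4 of Desjardins~C\^ot\'e's thesis for the two structural facts it needs: that $\image\varrho$ contains no fixed point of $\cV$, and that $\varrho$ alternates between simplices of $\dom\cV$ in dimension $p$ and simplices of $\image\cV$ in dimension $p+1$. You instead derive this alternation from scratch: you classify the steps of a solution by their effect on dimension (up-steps of $+1$ at points of $\dom\cV\setminus\Fix\cV$, non-increasing steps otherwise), show a non-constant closed solution must contain an up-step, obtain the chain of inequalities $\dim\alpha_{j+1}\le\dim\beta_j-1=\dim\alpha_j$ between consecutive up-step sources, and then use the closing-up of the cycle to force all of these to be equalities, which collapses the intervening descents to single facet transitions and rules out fixed points along the way. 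This monotonicity-around-a-cycle argument is exactly the content of the cited lemmas, so what your version buys is self-containedness: the reader does not need to consult the external reference. The delicate points you flag --- that $\beta_j\ne\alpha_{j+1}$ because $\dom\cV\cap\image\cV=\Fix\cV$, that the step out of $\beta_j$ excludes $\alpha_j$ so $\alpha_{j+1}\ne\alpha_j$, and that the wrap-around descent is covered by closedness --- are indeed the ones that need checking, and your treatment of them is sound.
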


	\begin{proof}
		If $\cV$ is not acyclic, there must exist a $\cV$-path $\alpha_0^{(p)},\beta_0^{(p+1)},\alpha_1^{(p)},...,\beta_{n-1}^{(p+1)},\alpha_n^{(p)}$ where $\alpha_0=\alpha_n$ and $n\geq 1$. We can easily verify that this path forms a solution in $\Pi_\cV$ going from $\alpha_0$ to $\alpha_n$. Hence, we have $\alpha_0\rightconnects{\cV}\beta_0\rightconnects{\cV}\alpha_n=\alpha_0$, so $\alpha_0\leftrightconnects{\cV}\beta_0$. Thus, $\Pi_\cV$ is not acyclic.

		Conversely, suppose that $\Pi_\cV$ is not acyclic, meaning that there exists two simplices $\sigma,\tau\in K$ such that  $\sigma\leftrightconnects{\cV}\tau$ and $\sigma\neq\tau$. We can show there exists a nontrivial closed $\cV$-path going from $\sigma$ to $\tau$. Indeed, since $\sigma\rightconnects{\cV}\tau\rightconnects{\cV}\sigma$, we know there is a solution $\varrho:\{0,1,...,m\}\rightarrow K$ with $m\geq 2$ such that $\varrho(0)=\varrho(m)=\sigma$. We also know from \citet[Proposition 5.3, Lemma 5.4]{DesjardinsCote2020} that $\image\varrho$ contains no fixed point of $\cV$ and that, for some $p\in\N$, the sequence $\varrho(0),\varrho(1),...,\varrho(m)$ alternates between simplices in $\dom\cV$ of dimension $p$ and simplices in $\image\cV$ of dimension $p+1$. Thus, $m=2n$ for some $n\in\N$. Assuming, without loss of generality, that $\varrho(0)\in\dom\cV$, we can define a nontrivial closed $\cV$-path $\alpha_0^{(p)},\beta_0^{(p+1)},\alpha_1^{(p)},...,\beta_{n-1}^{(p+1)},\alpha_n^{(p)}$ by considering $\alpha_i:=\varrho(2i)$ and $\beta_i :=\varrho(2i+1)$ for each $i=0,...,n-1$ as well as $\alpha_n:=\varrho(2n)=\varrho(0)$. Therefore, $\cV$ is not acyclic.
	\end{proof}

	\subsection{Coarsening a Morse decomposition}

	We saw how to find the finest Morse decomposition associated to some flow, but a coarse decomposition may also be useful to describe the dynamics of the flow in a more global manner. Thus, considering a fine enough Morse decomposition $\cM$, we show here under which conditions we can group together some elements of $\cM$ to obtain a coarser one.

	Consider a flow $\Pi_\cV:K\multimap K$. For $A,A'\subseteq K$, define the \emph{connecting set} $C(A',A)$ as the set of simplices $\sigma\in K$ for which there exists a full solution $\varrho:\Z\rightarrow K$ with $\sigma\in\image\varrho$, $\alpha(\varrho)\subseteq A'$ and $\omega(\varrho)\subseteq A$. A slightly different definition is proposed in \citep{Mrozek2017}, but we could verify that, in our context, it is equivalent to this one. Also, if $A=\{\sigma\}$ and $A'=\{\sigma'\}$, we simply write $C(\sigma',\sigma) := C(\{\sigma'\},\{\sigma\})$. When $A$ and $A'$ are invariant, we can show that
	\begin{gather*}
		C(A',A) = \left\lbrace \sigma\in K\ \big|\ A'\rightconnects{\cV}\sigma\rightconnects{\cV} A\right\rbrace.
	\end{gather*}
	From the definitions of connecting sets and of a Morse decomposition, the next result follows \citep[see][Proposition 9.1]{Mrozek2017}.

	\begin{prop}\label{prop:MorseDecompositionProperties}
		Let $\cM = \{M_r\ |\ r\in\P\}$ be a Morse decomposition of a flow $\Pi_\cV:K\multimap K$. For all $r,r'\in\P$, the following statements are true.
		\begin{enumerate}
			\item\label{prop:MorseDecompositionPropertiesEnum1} $C(M_{r'},M_r)\neq\emptyset\Leftrightarrow M_{r'}\rightconnects{\cV}M_r$.
			\item\label{prop:MorseDecompositionPropertiesEnum2} $C(M_{r'},M_r)$ is $\cV$-compatible.
			\item\label{prop:MorseDecompositionPropertiesEnum3} $C(M_r,M_r) = M_r$.
			\item\label{prop:MorseDecompositionPropertiesEnum4} If $C(M_{r'},M_r) \neq \emptyset$, then $r\leq r'$. From the contrapositive, it follows that $r'<r$ implies $C(M_{r'},M_r) = \emptyset$.
		\end{enumerate}
	\end{prop}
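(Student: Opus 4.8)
The plan is to reduce everything to the description $C(M_{r'},M_r)=\{\sigma\in K\mid M_{r'}\rightconnects{\cV}\sigma\rightconnects{\cV}M_r\}$ recorded just before the statement, which is available because every $M_r$, being an isolated invariant set, is in particular invariant. Granting this, part \ref{prop:MorseDecompositionPropertiesEnum1} is almost immediate: if $\sigma\in C(M_{r'},M_r)$ then concatenating the two nontrivial connections $M_{r'}\rightconnects{\cV}\sigma$ and $\sigma\rightconnects{\cV}M_r$ gives $M_{r'}\rightconnects{\cV}M_r$; conversely, if $M_{r'}\rightconnects{\cV}M_r$ there are $\mu'\in M_{r'}$ and $\mu\in M_r$ with $\mu'\rightconnects{\cV}\mu$, and since $\Sol(\mu,M_r)\neq\emptyset$ by invariance, the first forward step of a full solution through $\mu$ inside $M_r$ shows $\mu\rightconnects{\cV}M_r$, so $\mu\in C(M_{r'},M_r)$.

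For part \ref{prop:MorseDecompositionPropertiesEnum2}, recall that $\cV$-compatibility amounts to the equivalence $\sigma^-\in C(M_{r'},M_r)\Leftrightarrow\sigma^+\in C(M_{r'},M_r)$ for every $\sigma\in K$. This is trivial when $\sigma\in\Fix\cV$, so I would fix a pair $a:=\sigma^-$ and $b:=\sigma^+=\cV(a)$ with $a$ a facet of $b$, and show $a\in C(M_{r'},M_r)\Leftrightarrow b\in C(M_{r'},M_r)$. The engine is the one-step connection $a\rightconnects{\cV}b$, valid because $\Pi_\cV(a)=\{b\}$ when $a\in\dom\cV\backslash\Fix\cV$, together with Lemma \ref{lem:IfConnectsCoFaceConnects}. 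If $a\in C(M_{r'},M_r)$, then $M_{r'}\rightconnects{\cV}a\rightconnects{\cV}b$ gives $M_{r'}\rightconnects{\cV}b$, while $a\rightconnects{\cV}M_r$ and the cofacet $b\supset a$ give $b\rightconnects{\cV}M_r$ by Lemma \ref{lem:IfConnectsCoFaceConnects}\ref{lem:IfConnectsCoFaceConnectsEnum1}; hence $b\in C(M_{r'},M_r)$. Symmetrically, if $b\in C(M_{r'},M_r)$, then $a\rightconnects{\cV}b\rightconnects{\cV}M_r$ gives $a\rightconnects{\cV}M_r$, and $M_{r'}\rightconnects{\cV}b$ with the facet $a\subset b$ gives $M_{r'}\rightconnects{\cV}a$ by Lemma \ref{lem:IfConnectsCoFaceConnects}\ref{lem:IfConnectsCoFaceConnectsEnum2}, so $a\in C(M_{r'},M_r)$.

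Parts \ref{prop:MorseDecompositionPropertiesEnum3} and \ref{prop:MorseDecompositionPropertiesEnum4} I would handle directly from the full-solution definition of the connecting set. For \ref{prop:MorseDecompositionPropertiesEnum3}, the inclusion $M_r\subseteq C(M_r,M_r)$ holds because $M_r$ is invariant and $\cV$-compatible, so each $\mu\in M_r$ lies on a full solution $\varrho$ with $\image\varrho\subseteq M_r$, whence $\alpha(\varrho),\omega(\varrho)\subseteq M_r$; the reverse inclusion is exactly condition \ref{def:MorseDecompositionEnum3} of Definition \ref{def:MorseDecomposition}, since any $\sigma\in C(M_r,M_r)$ is witnessed by a full solution $\varrho$ with $\alpha(\varrho)\cup\omega(\varrho)\subseteq M_r$, forcing $\image\varrho\subseteq M_r$ and so $\sigma\in M_r$. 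For \ref{prop:MorseDecompositionPropertiesEnum4}, a witnessing full solution $\varrho$ has $\alpha(\varrho)\subseteq M_{r'}$ and $\omega(\varrho)\subseteq M_r$; as a full solution in a finite complex repeats values, both limit sets are nonempty, and condition \ref{def:MorseDecompositionEnum2} produces indices $s\leq s'$ with $\alpha(\varrho)\subseteq M_{s'}$ and $\omega(\varrho)\subseteq M_s$. Mutual disjointness (condition \ref{def:MorseDecompositionEnum1}) then forces $M_{r'}=M_{s'}$ and $M_r=M_s$, i.e. $r'=s'$ and $r=s$, so $r\leq r'$; the final assertion follows by contraposition, using that $r'<r$ excludes $r\leq r'$ in $\P$.

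The step I expect to be the main obstacle is part \ref{prop:MorseDecompositionPropertiesEnum2}: it is the only place where the facet/cofacet geometry of $\Pi_\cV$ must be brought to bear, and the whole point is to move a connection across the single $\cV$-pair $(\sigma^-,\sigma^+)$ in both directions. Lemma \ref{lem:IfConnectsCoFaceConnects} is precisely tailored to this, so the difficulty lies in recognizing that $\cV$-compatibility of the connecting set is governed by how connections interact with a matched facet--cofacet pair, rather than in any long computation.
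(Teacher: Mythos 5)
Your proof is correct. Note that the paper itself offers no argument for this proposition --- it is stated as following ``from the definitions of connecting sets and of a Morse decomposition'' with a citation to Mrozek's Proposition 9.1 --- so your write-up is a from-scratch verification rather than a variant of an existing one. All four parts check out: the reduction to the characterization $C(A',A)=\{\sigma\mid A'\rightconnects{\cV}\sigma\rightconnects{\cV}A\}$ (which the paper asserts for invariant $A,A'$), the transport of connections across a matched pair $(\sigma^-,\sigma^+)$ via Lemma \ref{lem:IfConnectsCoFaceConnects} for $\cV$-compatibility, the use of condition \ref{def:MorseDecompositionEnum3} of Definition \ref{def:MorseDecomposition} for $C(M_r,M_r)=M_r$, and the nonemptiness of the limit sets (finiteness of $K$) combined with disjointness to pin down $r\leq r'$. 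Two trivial remarks: in part \ref{prop:MorseDecompositionPropertiesEnum3} only invariance of $M_r$ is needed, not its $\cV$-compatibility; and your argument for part \ref{prop:MorseDecompositionPropertiesEnum1} in fact only uses the original full-solution definition of $C$ in one direction, so the dependence on the unproved characterization is lighter than your opening sentence suggests.
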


	The connecting sets can also be used to define Morse sets.

	\begin{defn}[Morse sets]\label{def:MorseSet}
		Let $\cM = \{M_r\ |\ r\in\P\}$ be a Morse decomposition of a flow $\Pi_\cV:K\multimap K$ and consider $I\subseteq \P$. The \emph{Morse set} associated to $I$ is
		\begin{align*}
			M(I) = \bigcup_{r,r'\in I}C(M_{r'},M_r).
		\end{align*}
	\end{defn}

	\begin{prop}[Theorem 9.4 in \citep{Mrozek2017}]\label{prop:MorseSetIsolatedInvariant}
		Every Morse set $M(I)$ is an isolated invariant set.
	\end{prop}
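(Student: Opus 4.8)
The plan is to verify the three properties that, by \citep[Proposition~3.4]{Batko2020} (the equivalence recorded in the proof of Lemma~\ref{lem:InvariantDefsEqui}), jointly characterize an isolated invariant set in the sense of Definition~\ref{def:IsolatedInvariantSet}: that $M(I)$ is $\cV$-compatible, that it is invariant in the sense of Definition~\ref{def:IsolatedInvariantSet}\ref{def:IsolatedInvariantSetEnum1}, and that $\Ex M(I)$ is closed. Throughout I will use the characterization $C(M_{r'},M_r)=\{\sigma\in K \mid M_{r'}\rightconnects{\cV}\sigma\rightconnects{\cV}M_r\}$, valid since each $M_r$ is invariant.

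For $\cV$-compatibility I would first record the elementary fact that a union of $\cV$-compatible sets is again $\cV$-compatible: if $\sigma^-$ lies in some member of the union then so does $\sigma^+$, and symmetrically, directly from the definition. Since each $C(M_{r'},M_r)$ is $\cV$-compatible by Proposition~\ref{prop:MorseDecompositionProperties}\ref{prop:MorseDecompositionPropertiesEnum2}, the set $M(I)=\bigcup_{r,r'\in I}C(M_{r'},M_r)$ inherits the property. For invariance, take $\sigma\in M(I)$, say $\sigma\in C(M_{r'},M_r)$ with $r,r'\in I$. By definition there is a full solution $\varrho$ with $\sigma\in\image\varrho$, $\alpha(\varrho)\subseteq M_{r'}$ and $\omega(\varrho)\subseteq M_r$; but then every $\tau\in\image\varrho$ satisfies $M_{r'}\rightconnects{\cV}\tau\rightconnects{\cV}M_r$, so $\image\varrho\subseteq C(M_{r'},M_r)\subseteq M(I)$ and $\varrho\in\Sol(\sigma,M(I))$, giving $\Sol(\sigma,M(I))\neq\emptyset$.

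The main obstacle is the closedness of $\Ex M(I)$, which I expect to be the heart of the argument. I would reduce it to the order-convexity of $M(I)$ under the face relation, namely: whenever $\alpha\subseteq\tau\subseteq\gamma$ with $\alpha,\gamma\in M(I)$, then $\tau\in M(I)$. Granting this, closedness follows directly: if $\tau\in\Ex M(I)$ and $\alpha$ is a face of $\tau$, then $\tau\subsetneq\gamma$ for some $\gamma\in M(I)$, whence $\alpha\subseteq\gamma$ gives $\alpha\in\Cl M(I)$; and $\alpha\in M(I)$ would force $\tau\in M(I)$ by convexity, contradicting $\tau\notin M(I)$, so $\alpha\in\Ex M(I)$, proving $\Ex M(I)$ closed.

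It remains to prove convexity, which is where Lemma~\ref{lem:IfConnectsCoFaceConnects} does the work. Write $\alpha\in C(M_{a'},M_a)$ and $\gamma\in C(M_{c'},M_c)$ with all four indices in $I$, and assume the nontrivial case $\alpha\subsetneq\tau\subsetneq\gamma$ (the cases $\tau=\alpha$ or $\tau=\gamma$ being immediate). Since $\tau\supset\alpha$ and $\alpha\rightconnects{\cV}M_a$, part~\ref{lem:IfConnectsCoFaceConnectsEnum1} of Lemma~\ref{lem:IfConnectsCoFaceConnects} yields $\tau\rightconnects{\cV}M_a$; since $\tau\subset\gamma$ and $M_{c'}\rightconnects{\cV}\gamma$, part~\ref{lem:IfConnectsCoFaceConnectsEnum2} yields $M_{c'}\rightconnects{\cV}\tau$. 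Hence $M_{c'}\rightconnects{\cV}\tau\rightconnects{\cV}M_a$, so $\tau\in C(M_{c'},M_a)\subseteq M(I)$ because $c',a\in I$, completing the argument. A shorter alternative would be to invoke Theorem~9.4 of \citep{Mrozek2017} through Lemma~\ref{lem:InvariantDefsEqui}, but this would require first reconciling the slightly different definition of connecting set used there, so I would prefer the self-contained route above.
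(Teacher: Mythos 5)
Your proof is correct, but it takes a genuinely different route from the paper: the paper offers no argument at all for this proposition, importing it wholesale as Theorem~9.4 of \citet{Mrozek2017} (modulo the translation of definitions recorded in Lemma~\ref{lem:InvariantDefsEqui} and the unproved remark that the two definitions of connecting set agree). You instead give a self-contained verification of the three conditions that, via \citep[Proposition~3.4]{Batko2020}, characterize an isolated invariant set, and each step checks out: $\cV$-compatibility passes to unions and each $C(M_{r'},M_r)$ is $\cV$-compatible by Proposition~\ref{prop:MorseDecompositionProperties}\ref{prop:MorseDecompositionPropertiesEnum2}; the full solution witnessing $\sigma\in C(M_{r'},M_r)$ has its entire image in $C(M_{r'},M_r)$ by the very definition of a connecting set, giving invariance; and your reduction of the closedness of $\Ex M(I)$ to order-convexity of $M(I)$ in the face poset, proved by combining the two halves of Lemma~\ref{lem:IfConnectsCoFaceConnects} to get $M_{c'}\rightconnects{\cV}\tau\rightconnects{\cV}M_a$ and hence $\tau\in C(M_{c'},M_a)\subseteq M(I)$, is clean and is exactly the kind of argument the citation hides. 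The one point to flag is that you repeatedly use the characterization $C(A',A)=\{\sigma\mid A'\rightconnects{\cV}\sigma\rightconnects{\cV}A\}$ for invariant $A,A'$; the paper asserts this without proof (``we can show that''), so your argument inherits that dependency, but this is consistent with the paper's own conventions. What your approach buys is independence from Mrozek's multivector-field formalism and from the unverified equivalence of the two connecting-set definitions; what it costs is about a page of argument that the paper dispatches with a citation.
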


	Note that Proposition \ref{prop:MorseSetIsolatedInvariant} implies that Conley indexes are well defined for Morse sets $M(I)$. Also, it suggests that Morse sets make good candidates to build a Morse decomposition $\cM'$ from a finer decomposition $\cM = \{M_r\ |\ r\in\P\}$.
	Theorem~\ref{theo:ConditionsForMorseDecomposition} establishes the necessary and sufficient conditions for a partition $\{I_s\subseteq\P\ |\ s\in\S\}$ of $\P$ to induce a Morse decomposition $\cM' = \{M(I_s)\ |\ s\in\S\}$. Before stating and proving it formally, we give the intuition of the theorem with the two following examples.

	\begin{ex}\label{ex:MorseDecompNotInduced}
		Consider the Morse decomposition $\cM$ from Example \ref{ex:MorseDecomposition} and the partition $\big\{\{1,3,4,6\}, \{2,5,8\}, \{7\}\big\}$ of its index set $\P=\{1,...,8\}$. The three induced Morse sets are shown in Figure \ref{fig:MorseDecompNotInducedM}. The connections between each of these Morse sets are summarized by the directed graph in Figure \ref{fig:MorseDecompNotInducedP}. Since this graph is not acyclic, it does not represent a partial order on $\big\{\{1,3,4,6\}, \{2,5,8\}, \{7\}\big\}$. In other words, the connections between the induced Morse sets do not describe a partial order on the given partition of $\P$. Hence, from Theorem \ref{theo:ConditionsForMorseDecomposition}, it follows that the collection $\cM'$ of Morse sets induced by the partition $\big\{\{1,3,4,6\}, \{2,5,8\}, \{7\}\big\}$ is not a Morse decomposition.

		\begin{figure}[h]
			\centering
			\begin{subfigure}[b]{0.55\textwidth}
				\centering
				\begin{tikzpicture}[scale=1]
					\coordinate (D) at (0,0);
					\coordinate (A) at (1,1.732);
					\coordinate (H) at (1,-1.732);
					\coordinate (E) at (2,0);
					\coordinate (B) at (3,1.732);
					\coordinate (I) at (3,-1.732);
					\coordinate (F) at (4,0);
					\coordinate (C) at (5,1.732);
					\coordinate (J) at (5,-1.732);
					\coordinate (G) at (6,0);

					\coordinate (AB) at ($0.5*(A)+0.5*(B)$);
					\coordinate (BC) at ($0.5*(B)+0.5*(C)$);
					\coordinate (AD) at ($0.5*(A)+0.5*(D)$);
					\coordinate (AE) at ($0.5*(A)+0.5*(E)$);
					\coordinate (BE) at ($0.5*(B)+0.5*(E)$);
					\coordinate (BF) at ($0.5*(B)+0.5*(F)$);
					\coordinate (CF) at ($0.5*(C)+0.5*(F)$);
					\coordinate (CG) at ($0.5*(C)+0.5*(G)$);
					\coordinate (DE) at ($0.5*(D)+0.5*(E)$);
					\coordinate (EF) at ($0.5*(E)+0.5*(F)$);
					\coordinate (FG) at ($0.5*(F)+0.5*(G)$);
					\coordinate (DH) at ($0.5*(D)+0.5*(H)$);
					\coordinate (EH) at ($0.5*(E)+0.5*(H)$);
					\coordinate (EI) at ($0.5*(E)+0.5*(I)$);
					\coordinate (FI) at ($0.5*(F)+0.5*(I)$);
					\coordinate (FJ) at ($0.5*(F)+0.5*(J)$);
					\coordinate (GJ) at ($0.5*(G)+0.5*(J)$);
					\coordinate (HI) at ($0.5*(H)+0.5*(I)$);
					\coordinate (IJ) at ($0.5*(I)+0.5*(J)$);

					\coordinate (ADE) at ($0.333*(A)+0.333*(D)+0.333*(E)$);
					\coordinate (ABE) at ($0.333*(A)+0.333*(B)+0.333*(E)$);
					\coordinate (BEF) at ($0.333*(B)+0.333*(E)+0.333*(F)$);
					\coordinate (BCF) at ($0.333*(B)+0.333*(C)+0.333*(F)$);
					\coordinate (CFG) at ($0.333*(C)+0.333*(F)+0.333*(G)$);
					\coordinate (DEH) at ($0.333*(D)+0.333*(E)+0.333*(H)$);
					\coordinate (EHI) at ($0.333*(E)+0.333*(H)+0.333*(I)$);
					\coordinate (EFI) at ($0.333*(E)+0.333*(F)+0.333*(I)$);
					\coordinate (FIJ) at ($0.333*(F)+0.333*(I)+0.333*(J)$);
					\coordinate (FGJ) at ($0.333*(F)+0.333*(G)+0.333*(J)$);

					\fill[black!5] (A) -- (C) -- (G) -- (J) -- (H) -- (D) -- cycle;

					\fill[red!50] ($(I)!\ec!(HI) + (-\wc,-\wc)$) arc(-90:90:\wc) -- ($(H)!\ec!(EH)$) -- ($(D)!\cc!(DEH)$) -- ($(E) + (0,-\wc)$) arc(-90:30:\wc) -- +($(A)-(E)$) arc(30:150:\wc) -- +($(D)-(A)$) arc(150:210:\wc) -- +($(H)-(D)$) arc(210:270:\wc) -- cycle;
					\fill[cyan!50] ($(B)!\ec!(BC) + (\wc,\wc)$) arc(90:270:\wc) -- ($(C)!\ec!(CF)$) -- ($(G)!\cc!(CFG)$) -- ($(F) + (0,\wc)$) arc(90:210:\wc) -- +($(J)-(F)$) arc(210:330:\wc) -- +($(G)-(J)$) arc(-30:30:\wc) -- +($(C)-(G)$) arc(30:90:\wc) -- cycle;
					\fill[green!50] ($(E)!\cc!(BEF)$) -- ($(F)!\cc!(BEF)$)  -- ($(B)!\cc!(BEF)$) -- cycle;

					\draw (A) -- (C) -- (G) -- (J) -- (H) -- (D) -- cycle;
					\draw (D) -- (G)  (A) -- (I) -- (C)  (H) -- (B) -- (J);



					\node at (D){\LARGE $\bullet$};
					\node at (DE){\LARGE $\bullet$};
					\node at (ADE){\LARGE $\bullet$};
					\node at (BEF){\LARGE $\bullet$};
					\node at (BC){\LARGE $\bullet$};
					\node at (HI){\LARGE $\bullet$};
					\node at (FGJ){\LARGE $\bullet$};

					\draw[ultra thick, -latex] (A) -- (AD);
					\draw[ultra thick, -latex] (AB) -- (ABE);
					\draw[ultra thick, -latex] (B) -- (BE);
					\draw[ultra thick, -latex] (C) -- (CG);
					\draw[ultra thick, -latex] (BF) -- (BCF);
					\draw[ultra thick, -latex] (CF) -- (CFG);
					\draw[ultra thick, -latex] (E) -- (AE);
					\draw[ultra thick, -latex] (EF) -- (EFI);
					\draw[ultra thick, -latex] (F) -- (FJ);
					\draw[ultra thick, -latex] (G) -- (FG);
					\draw[ultra thick, -latex] (EH) -- (DEH);
					\draw[ultra thick, -latex] (EI) -- (EHI);
					\draw[ultra thick, -latex] (FI) -- (FIJ);
					\draw[ultra thick, -latex] (H) -- (DH);
					\draw[ultra thick, -latex] (I) -- (IJ);
					\draw[ultra thick, -latex] (J) -- (GJ);

					\node[xshift=11pt, yshift=-6pt] at (A) {$A$};
					\node[xshift=11pt, yshift=-6pt] at (B) {$B$};
					\node[xshift=11pt, yshift=-6pt] at (C) {$C$};
					\node[xshift=11pt, yshift=6pt] at (D) {$D$};
					\node[xshift=11pt, yshift=6pt] at (E) {$E$};
					\node[xshift=11pt, yshift=6pt] at (F) {$F$};
					\node[xshift=11pt, yshift=6pt] at (G) {$G$};
					\node[xshift=11pt, yshift=6pt] at (H) {$H$};
					\node[xshift=11pt, yshift=6pt] at (I) {$I$};
					\node[xshift=11pt, yshift=6pt] at (J) {$J$};
				\end{tikzpicture}
				\caption{}\label{fig:MorseDecompNotInducedM}
			\end{subfigure}
			\begin{subfigure}[b]{0.37\textwidth}
				\centering
				\begin{tikzpicture}[scale=1]
					\draw[white, opacity=0] (8,-1.732-\wc) rectangle (11, 1.732+\wc);

					\begin{scope}[every node/.style={rectangle,thick,draw}]
						\node[fill=red!50] (M1) at (8,-1) {$\{1,3,4,6\}$};
						\node[fill=cyan!50] (M2) at (11,-1) {$\{2,5,8\}$};
						\node[fill=green!50] (M7) at (9.5,1) {$\{7\}$};
					\end{scope}

					\begin{scope}[>={latex},
						every edge/.style={draw,very thick}]
						\path [->] (M7) edge (M1);
						\path [->] (M7) edge (M2);
						\path [->, bend right=15] (M1) edge (M2);
						\path [->, bend right=15] (M2) edge (M1);
					\end{scope}
				\end{tikzpicture}
				\caption{}\label{fig:MorseDecompNotInducedP}
			\end{subfigure}
			\caption{In \subref{fig:MorseDecompNotInducedM}, the discrete vector field $\cV$ from example \ref{ex:MorseDecomposition} and the induced Morse sets $M(\{1,3,4,6\})$ in red, $M(\{2,5,8\})$ in blue and $M(\{7\})$ in green. In the directed graph in \subref{fig:MorseDecompNotInducedP}, there is an arrow between two nodes iff there is a solution going from one of the associated Morse sets to the other}\label{fig:MorseDecompNotInduced}
		\end{figure}
	\end{ex}

	\begin{ex}\label{ex:MorseDecompInduced}
		Consider again the Morse decomposition $\cM$ from Example \ref{ex:MorseDecomposition} and, this time, the partition $\big\{\{1,3,6\}, \{2,8\}, \{4\}, \{5,7\}\big\}$ of $\P=\{1,...,8\}$. The induced Morse sets are shown in Figure \ref{fig:MorseDecompInducedM} and their connections are as in Figure \ref{fig:MorseDecompInducedP}. The directed graph obtained here is acyclic, which means it represents a partial order on $\big\{\{1,3,6\}, \{2,8\}, \{4\}, \{5,7\}\big\}$. The existence of this partial order is necessary and sufficient to deduce that the induced Morse sets from Figure \ref{fig:MorseDecompInducedM} form a Morse decomposition of the given flow.

		\begin{figure}[h]
			\centering
			\begin{subfigure}[b]{0.55\textwidth}
				\centering
				\begin{tikzpicture}[scale=1]
					\coordinate (D) at (0,0);
					\coordinate (A) at (1,1.732);
					\coordinate (H) at (1,-1.732);
					\coordinate (E) at (2,0);
					\coordinate (B) at (3,1.732);
					\coordinate (I) at (3,-1.732);
					\coordinate (F) at (4,0);
					\coordinate (C) at (5,1.732);
					\coordinate (J) at (5,-1.732);
					\coordinate (G) at (6,0);

					\coordinate (AB) at ($0.5*(A)+0.5*(B)$);
					\coordinate (BC) at ($0.5*(B)+0.5*(C)$);
					\coordinate (AD) at ($0.5*(A)+0.5*(D)$);
					\coordinate (AE) at ($0.5*(A)+0.5*(E)$);
					\coordinate (BE) at ($0.5*(B)+0.5*(E)$);
					\coordinate (BF) at ($0.5*(B)+0.5*(F)$);
					\coordinate (CF) at ($0.5*(C)+0.5*(F)$);
					\coordinate (CG) at ($0.5*(C)+0.5*(G)$);
					\coordinate (DE) at ($0.5*(D)+0.5*(E)$);
					\coordinate (EF) at ($0.5*(E)+0.5*(F)$);
					\coordinate (FG) at ($0.5*(F)+0.5*(G)$);
					\coordinate (DH) at ($0.5*(D)+0.5*(H)$);
					\coordinate (EH) at ($0.5*(E)+0.5*(H)$);
					\coordinate (EI) at ($0.5*(E)+0.5*(I)$);
					\coordinate (FI) at ($0.5*(F)+0.5*(I)$);
					\coordinate (FJ) at ($0.5*(F)+0.5*(J)$);
					\coordinate (GJ) at ($0.5*(G)+0.5*(J)$);
					\coordinate (HI) at ($0.5*(H)+0.5*(I)$);
					\coordinate (IJ) at ($0.5*(I)+0.5*(J)$);

					\coordinate (ADE) at ($0.333*(A)+0.333*(D)+0.333*(E)$);
					\coordinate (ABE) at ($0.333*(A)+0.333*(B)+0.333*(E)$);
					\coordinate (BEF) at ($0.333*(B)+0.333*(E)+0.333*(F)$);
					\coordinate (BCF) at ($0.333*(B)+0.333*(C)+0.333*(F)$);
					\coordinate (CFG) at ($0.333*(C)+0.333*(F)+0.333*(G)$);
					\coordinate (DEH) at ($0.333*(D)+0.333*(E)+0.333*(H)$);
					\coordinate (EHI) at ($0.333*(E)+0.333*(H)+0.333*(I)$);
					\coordinate (EFI) at ($0.333*(E)+0.333*(F)+0.333*(I)$);
					\coordinate (FIJ) at ($0.333*(F)+0.333*(I)+0.333*(J)$);
					\coordinate (FGJ) at ($0.333*(F)+0.333*(G)+0.333*(J)$);

					\fill[black!5] (A) -- (C) -- (G) -- (J) -- (H) -- (D) -- cycle;

					\fill[red!50] ($(E) + (0,-\wc)$) arc(-90:30:\wc) -- +($(A)-(E)$) arc(30:150:\wc) -- +($(D)-(A)$) arc(150:270:\wc) -- cycle;
					\fill[cyan!50] ($(F) + (0,\wc)$) arc(90:210:\wc) -- +($(J)-(F)$) arc(210:330:\wc) -- +($(G)-(J)$) arc(-30:90:\wc) -- cycle;
					\fill[green!50] ($(E)!\cc!(BEF)$) -- ($(F)!\ec!(BF)$)  -- ($(C)!\ec!(BC)$) arc(0:90:\wc) --  ($(B)!\ec!(BC) + (\wc,\wc)$) arc(90:180:\wc) arc(360:330:\wc) -- cycle;
					\fill[yellow] ($(H)!\ec!(HI) + (\wc,\wc)$) arc(90:270:\wc)  -- ($(I)!\ec!(HI) + (-\wc,-\wc)$) arc(-90:90:\wc) -- cycle;

					\draw (A) -- (C) -- (G) -- (J) -- (H) -- (D) -- cycle;
					\draw (D) -- (G)  (A) -- (I) -- (C)  (H) -- (B) -- (J);



					\node at (D){\LARGE $\bullet$};
					\node at (DE){\LARGE $\bullet$};
					\node at (ADE){\LARGE $\bullet$};
					\node at (BEF){\LARGE $\bullet$};
					\node at (BC){\LARGE $\bullet$};
					\node at (HI){\LARGE $\bullet$};
					\node at (FGJ){\LARGE $\bullet$};

					\draw[ultra thick, -latex] (A) -- (AD);
					\draw[ultra thick, -latex] (AB) -- (ABE);
					\draw[ultra thick, -latex] (B) -- (BE);
					\draw[ultra thick, -latex] (C) -- (CG);
					\draw[ultra thick, -latex] (BF) -- (BCF);
					\draw[ultra thick, -latex] (CF) -- (CFG);
					\draw[ultra thick, -latex] (E) -- (AE);
					\draw[ultra thick, -latex] (EF) -- (EFI);
					\draw[ultra thick, -latex] (F) -- (FJ);
					\draw[ultra thick, -latex] (G) -- (FG);
					\draw[ultra thick, -latex] (EH) -- (DEH);
					\draw[ultra thick, -latex] (EI) -- (EHI);
					\draw[ultra thick, -latex] (FI) -- (FIJ);
					\draw[ultra thick, -latex] (H) -- (DH);
					\draw[ultra thick, -latex] (I) -- (IJ);
					\draw[ultra thick, -latex] (J) -- (GJ);

					\node[xshift=11pt, yshift=-6pt] at (A) {$A$};
					\node[xshift=11pt, yshift=-6pt] at (B) {$B$};
					\node[xshift=11pt, yshift=-6pt] at (C) {$C$};
					\node[xshift=11pt, yshift=6pt] at (D) {$D$};
					\node[xshift=11pt, yshift=6pt] at (E) {$E$};
					\node[xshift=11pt, yshift=6pt] at (F) {$F$};
					\node[xshift=11pt, yshift=6pt] at (G) {$G$};
					\node[xshift=11pt, yshift=6pt] at (H) {$H$};
					\node[xshift=11pt, yshift=6pt] at (I) {$I$};
					\node[xshift=11pt, yshift=6pt] at (J) {$J$};
				\end{tikzpicture}
				\caption{}\label{fig:MorseDecompInducedM}
			\end{subfigure}
			\begin{subfigure}[b]{0.37\textwidth}
				\centering
				\begin{tikzpicture}[scale=1]
					\draw[white, opacity=0] (8,-1.732-\wc) rectangle (11, 1.732+\wc);

					\begin{scope}[every node/.style={rectangle,thick,draw}]
						\node[fill=red!50] (M1) at (8,-1.5) {$\{1,3,6\}$};
						\node[fill=cyan!50] (M2) at (11,-1.5) {$\{2,8\}$};
						\node[fill=yellow] (M4) at (9.5,0) {$\{4\}$};
						\node[fill=green!50] (M7) at (9.5,1.5) {$\{5,7\}$};
					\end{scope}

					\begin{scope}[>={latex},
						every edge/.style={draw,very thick}]
						\path [->] (M4) edge (M1);
						\path [->] (M4) edge (M2);
						\path [->] (M7) edge (M1);
						\path [->] (M7) edge (M2);
						\path [->] (M7) edge (M4);
					\end{scope}
				\end{tikzpicture}
				\caption{}\label{fig:MorseDecompInducedP}
			\end{subfigure}
			\caption{In \subref{fig:MorseDecompInducedM}, the discrete vector field $\cV$ from example \ref{ex:MorseDecomposition} and the induced Morse sets $M(\{1,3,6\})$ in red, $M(\{2,8\})$ in blue, $M(\{4\})$ in yellow and $M(\{5,7\})$ in green. In the directed graph in \subref{fig:MorseDecompInducedP}, there is an arrow between two nodes iff there is a solution going from one of the associated Morse sets to the other}\label{fig:MorseDecompInduced}
		\end{figure}
	\end{ex}

	In order to prove Theorem \ref{theo:ConditionsForMorseDecomposition}, the following lemmas are needed.

	\begin{lem}\label{lem:MrSubsetMIs}
		Let $\cM = \{M_r\ |\ r\in\P\}$ be a Morse decomposition and consider a partition $\{I_s\subseteq\P\ |\ s\in\S\}$ of $\P$. If $r\in I_s$ for some $s\in\S$, then $M_r\subseteq M(I_s)$. Moreover, when the Morse sets in $\cM' = \{M(I_s)\ |\ s\in\S\}$ are mutually disjoint, we have $M_r\cap M(I_{s'})=\emptyset$ for all $s'\in\S$ such that $r\notin I_{s'}$.
	\end{lem}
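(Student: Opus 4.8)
The plan is to reduce both statements to the defining formula for a Morse set from Definition~\ref{def:MorseSet}, namely $M(I)=\bigcup_{r,r'\in I}C(M_{r'},M_r)$, together with the identity $C(M_r,M_r)=M_r$ supplied by Proposition~\ref{prop:MorseDecompositionProperties}\ref{prop:MorseDecompositionPropertiesEnum3}. No new dynamical input is required; everything follows from set-theoretic bookkeeping with these two facts.

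For the first statement, suppose $r\in I_s$. Then the pair $(r,r)$ is among the index pairs appearing in the union defining $M(I_s)$, so $C(M_r,M_r)\subseteq M(I_s)$. Since $C(M_r,M_r)=M_r$, this immediately gives $M_r\subseteq M(I_s)$. There is nothing further to do here.

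For the second statement, I would first invoke that $\{I_s\mid s\in\S\}$ is a \emph{partition} of $\P$: the index $r$ lies in exactly one block, say $r\in I_s$. If $s'\in\S$ satisfies $r\notin I_{s'}$, then necessarily $s'\neq s$, because otherwise $r$ would belong to two distinct blocks. The disjointness hypothesis on the collection $\cM'$ then yields $M(I_s)\cap M(I_{s'})=\emptyset$. Combining this with the inclusion $M_r\subseteq M(I_s)$ established in the first part gives $M_r\cap M(I_{s'})\subseteq M(I_s)\cap M(I_{s'})=\emptyset$, which is the desired conclusion.

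I do not expect any genuine obstacle in this argument, as both parts are direct consequences of the definitions. The only point requiring a moment of care is the use of the partition property to ensure that $r\notin I_{s'}$ forces $s\neq s'$; this is precisely what licenses the application of the disjointness of $\cM'$ in the final step.
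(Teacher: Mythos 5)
Your proposal is correct and follows exactly the paper's own argument: both use $C(M_r,M_r)=M_r$ from Proposition~\ref{prop:MorseDecompositionProperties} to get $M_r\subseteq M(I_s)$, and then the disjointness of $\cM'$ (with the partition property forcing $s'\neq s$) to conclude $M_r\cap M(I_{s'})\subseteq M(I_s)\cap M(I_{s'})=\emptyset$. No gaps.
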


	\begin{proof}
		From Proposition \ref{prop:MorseDecompositionProperties}\ref{prop:MorseDecompositionPropertiesEnum3}, we know that $M_r = C(M_r,M_r)$, so if $r\in I_s$,
		\begin{gather*}
			M_r\subseteq\bigcup_{r',r''\in I_s}C(M_{r''},M_{r'}) = M(I_s).
		\end{gather*}
		It follows that $M_r\cap M(I_{s'})\subseteq M(I_s)\cap M(I_{s'}) = \emptyset$ for all $s'\neq s$ when the sets in $\cM'$ are mutually disjoint.
	\end{proof}

	\begin{lem}\label{lem:MrConnectedIffMIsConnected}
		Let $\cM = \{M_r\ |\ r\in\P\}$ be a Morse decomposition of a flow $\Pi_\cV$ and consider a partition $\{I_s\subseteq\P\ |\ s\in\S\}$ of $\P$. For all $s,s'\in\S$, we have
		\begin{gather*}
			M(I_{s'})\rightconnects{\cV}M(I_s) \quad\Leftrightarrow\quad M_{r'}\rightconnects{\cV}M_r\text{ for some } r\in I_s,r'\in I_{s'}.
		\end{gather*}
	\end{lem}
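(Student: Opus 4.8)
The plan is to prove the two implications separately. The direction $(\Leftarrow)$ is immediate: if $M_{r'}\rightconnects{\cV}M_r$ for some $r\in I_s$ and $r'\in I_{s'}$, there is a nontrivial solution from some $u\in M_{r'}$ to some $w\in M_r$; by Lemma \ref{lem:MrSubsetMIs} we have $M_{r'}\subseteq M(I_{s'})$ and $M_r\subseteq M(I_s)$, so $u\in M(I_{s'})$ and $w\in M(I_s)$, whence $M(I_{s'})\rightconnects{\cV}M(I_s)$.

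For the direction $(\Rightarrow)$, I would first unpack the hypothesis: $M(I_{s'})\rightconnects{\cV}M(I_s)$ provides $\sigma\in M(I_{s'})$, $\tau\in M(I_s)$, and a nontrivial solution $\eta$ from $\sigma$ to $\tau$. By Definition \ref{def:MorseSet} there are $q'\in I_{s'}$ and $p\in I_s$, together with indices $q,p'$, such that $\sigma\in C(M_{q'},M_q)$ and $\tau\in C(M_{p'},M_p)$. Directly from the definition of a connecting set, $\sigma$ lies on a full solution $\varrho_\sigma$ with $\alpha(\varrho_\sigma)\subseteq M_{q'}$, and $\tau$ lies on a full solution $\varrho_\tau$ with $\omega(\varrho_\tau)\subseteq M_p$.

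The heart of the argument is then to splice these into a single full solution: take the backward tail of $\varrho_\sigma$ ending at the chosen occurrence of $\sigma$, follow it by the finite segment $\eta$ running from $\sigma$ to $\tau$, and continue with the forward tail of $\varrho_\tau$ starting at $\tau$. I would check from the three-case definition of $\Pi_\cV$ in Definition \ref{def:flow} that this concatenation $\varrho$ is again a full solution and that its limit sets are inherited from the outer tails, so that $\alpha(\varrho)=\alpha(\varrho_\sigma)\subseteq M_{q'}$ and $\omega(\varrho)=\omega(\varrho_\tau)\subseteq M_p$. Because $K$ is finite, both limit sets are nonempty, so I may choose $u\in\alpha(\varrho)\subseteq M_{q'}$ and $w\in\omega(\varrho)\subseteq M_p$; as $u$ is attained at arbitrarily negative times and $w$ at arbitrarily positive times along $\varrho$, restricting $\varrho$ between such occurrences yields a nontrivial solution from $u$ to $w$, whence $M_{q'}\rightconnects{\cV}M_p$ with $q'\in I_{s'}$ and $p\in I_s$, which is exactly the desired conclusion.

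The main obstacle I anticipate is purely the bookkeeping of the splicing step: verifying that $\varrho(i+1)\in\Pi_\cV(\varrho(i))$ at the two junction points where the tails meet $\eta$, and confirming that the $\alpha$- and $\omega$-limits really descend to those of $\varrho_\sigma$ and $\varrho_\tau$ (this uses that $\alpha(\varrho)$ depends only on the values of $\varrho$ at arbitrarily negative times, and dually for $\omega(\varrho)$). Once the spliced solution is in hand, extracting the connection between the correct Morse sets of $\cM$ is routine; it is worth noting that, organized this way, the argument needs neither the mutual disjointness of the $M_r$ nor the second and third axioms of a Morse decomposition.
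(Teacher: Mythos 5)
Your proof is correct and follows essentially the same route as the paper's: both reduce the forward implication to producing $q'\in I_{s'}$ and $p\in I_s$ with $M_{q'}\rightconnects{\cV}\sigma'\rightconnects{\cV}\sigma\rightconnects{\cV}M_{p}$ and concluding that $M_{q'}\rightconnects{\cV}M_p$. The only difference is one of packaging: the paper invokes the previously stated characterization $C(A',A)=\left\lbrace \sigma\ \big|\ A'\rightconnects{\cV}\sigma\rightconnects{\cV}A\right\rbrace$ for invariant sets and concludes by transitivity of $\rightconnects{\cV}$, whereas you re-derive the needed connections directly from the definition of a connecting set by splicing full solutions and examining their limit sets, which is more self-contained but establishes the same facts.
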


	\begin{proof}
		If $M_{r'}\rightconnects{\cV}M_r$ for some $r\in I_s$ and $r'\in I_{s'}$, then obviously $M(I_{s'})\rightconnects{\cV}M(I_s)$ from Lemma \ref{lem:MrSubsetMIs}. Now, suppose $M(I_{s'})\rightconnects{\cV}M(I_s)$. Then, there exists some $\sigma\in M(I_s)$ and some $\sigma'\in M(I_{s'})$ such that $\sigma'\rightconnects{\cV}\sigma$. Also, by definition of a Morse set, $\sigma\in C(M_{r_1},M_{r_2})$ and $\sigma'\in C(M_{r_1'},M_{r_2}')$ for some $r_1,r_2\in I_s$ and $r_1',r_2'\in I_{s'}$, meaning that $M_{r_1}\rightconnects{\cV}\sigma\rightconnects{\cV}M_{r_2}$ and $M_{r_1'}\rightconnects{\cV}\sigma'\rightconnects{\cV}M_{r_2'}$. Hence,
		\begin{gather*}
			M_{r_1'}\rightconnects{\cV}\sigma'\rightconnects{\cV}\sigma\rightconnects{\cV}M_{r_2}.
		\end{gather*}
		Since $M_{r_1'}\subseteq M(I_{s'})$ and $M_{r_2}\subseteq M(I_s)$ by Lemma \ref{lem:MrSubsetMIs}, we have the result.
	\end{proof}

	We are now ready to state Theorem \ref{theo:ConditionsForMorseDecomposition}. It is proven in a general setting, but will be particularly useful in Section \ref{sec:CriticalComponents} to determine under which conditions the critical components of a \mdm function form a Morse decomposition.

	\pagebreak

	\begin{defn}
		Let $\Pi_\cV:K\multimap K$ and consider a collection $\cM=\{M_r\ |\ r\in\P\}$ of subsets of $K$. A \emph{$\cM$-path} is a sequence $r_0,r_1,...,r_n\in\P$ such that $M_{r_0}\rightconnects{\cV}M_{r_1}\rightconnects{\cV}\cdots\rightconnects{\cV} M_{r_n}$ and a \emph{$\cM$-cycle} is a $\cM$-path for which $r_0=r_n$. We say a $\cM$-path or a $\cM$-cycle is \emph{trivial} if $r_0=r_1=\cdots =r_n$.
	\end{defn}

	\begin{thm}\label{theo:ConditionsForMorseDecomposition}
		Let $\cM = \{M_r\ |\ r\in\P\}$ be a Morse decomposition of a flow $\Pi_\cV:K\multimap K$. Consider a partition $\{I_s\subseteq\P\ |\ s\in\S\}$ of $\P$ and the induced collection $\cM' = \{M(I_s)\ |\ s\in\S\}$. The three following statements are equivalent:
		\begin{enumerate}[label=(\alph{enumi})]
			\item\label{prop:ConditionsForMorseDecompositionEnumDecomp} There exists a partial order on $\S$ for which $\cM'$ is a Morse decomposition.

			\item\label{prop:ConditionsForMorseDecompositionEnumNoCycle} There exists no nontrivial $\cM'$-cycle.

			\item\label{prop:ConditionsForMorseDecompositionEnumPreorder} The preorder induced by the relation $R$ defined on $\S$ such that
			\begin{gather*}
				sRs'\quad\Leftrightarrow\quad M(I_s)\leftconnects{\cV}M(I_{s'})
			\end{gather*}
			is a partial order.
		\end{enumerate}
	\end{thm}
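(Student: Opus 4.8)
The plan is to prove the cyclic chain of implications \ref{prop:ConditionsForMorseDecompositionEnumDecomp} $\Rightarrow$ \ref{prop:ConditionsForMorseDecompositionEnumNoCycle} $\Rightarrow$ \ref{prop:ConditionsForMorseDecompositionEnumPreorder} $\Rightarrow$ \ref{prop:ConditionsForMorseDecompositionEnumDecomp}. The uniform device throughout is to translate connections between coarse sets into connections between fine sets via Lemma \ref{lem:MrConnectedIffMIsConnected}, namely that $M(I_{s'})\rightconnects{\cV}M(I_s)$ holds iff $M_{r'}\rightconnects{\cV}M_r$ for some $r\in I_s$, $r'\in I_{s'}$, and to recall from Proposition \ref{prop:MorseSetIsolatedInvariant} that each $M(I_s)$ is an isolated invariant set, so that the structural properties of Proposition \ref{prop:MorseDecompositionProperties} become available for $\cM'$ as soon as $\cM'$ is known to be a Morse decomposition. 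I would also keep careful track of directions, using $sRs'\Leftrightarrow M(I_s)\leftconnects{\cV}M(I_{s'})\Leftrightarrow M(I_{s'})\rightconnects{\cV}M(I_s)$.

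For \ref{prop:ConditionsForMorseDecompositionEnumDecomp} $\Rightarrow$ \ref{prop:ConditionsForMorseDecompositionEnumNoCycle}, assume $\cM'$ is a Morse decomposition for some partial order $\preceq$ on $\S$. Each step $M(I_{s_i})\rightconnects{\cV}M(I_{s_{i+1}})$ of a $\cM'$-path yields $C(M(I_{s_i}),M(I_{s_{i+1}}))\neq\emptyset$ by Proposition \ref{prop:MorseDecompositionProperties}\ref{prop:MorseDecompositionPropertiesEnum1}, hence $s_{i+1}\preceq s_i$ by Proposition \ref{prop:MorseDecompositionProperties}\ref{prop:MorseDecompositionPropertiesEnum4}; a $\cM'$-cycle $s_0,\dots,s_n$ with $s_0=s_n$ then gives a descending chain $s_n\preceq\cdots\preceq s_0$ with coinciding endpoints, so antisymmetry of $\preceq$ collapses it and proves the cycle trivial. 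For \ref{prop:ConditionsForMorseDecompositionEnumNoCycle} $\Rightarrow$ \ref{prop:ConditionsForMorseDecompositionEnumPreorder}, the induced preorder $\bar{R}\cup\id_\S$ is reflexive and transitive by construction, so only antisymmetry is at stake; if $s\neq s'$ with $s\bar{R}s'$ and $s'\bar{R}s$, then reading each $R$-step as a connection turns the two witnessing chains into a $\cM'$-path from $s'$ to $s$ and a $\cM'$-path from $s$ to $s'$, whose concatenation is a nontrivial $\cM'$-cycle, contradicting \ref{prop:ConditionsForMorseDecompositionEnumNoCycle}.

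The substantive implication is \ref{prop:ConditionsForMorseDecompositionEnumPreorder} $\Rightarrow$ \ref{prop:ConditionsForMorseDecompositionEnumDecomp}, where I would equip $\S$ with the partial order $\preceq\ :=\ \bar{R}\cup\id_\S$ supplied by \ref{prop:ConditionsForMorseDecompositionEnumPreorder} and verify the three axioms of Definition \ref{def:MorseDecomposition} for $\cM'$. Isolated invariance in \ref{def:MorseDecompositionEnum1} is immediate from Proposition \ref{prop:MorseSetIsolatedInvariant}, so the genuine obstacle is the mutual disjointness also demanded by \ref{def:MorseDecompositionEnum1}, which cannot simply be quoted from Lemma \ref{lem:MrSubsetMIs} because that lemma presupposes disjointness. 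I would obtain it directly: a simplex $\sigma\in M(I_s)\cap M(I_{s'})$ lies in some $C(M_{a'},M_a)$ with $a,a'\in I_s$ and in some $C(M_{b'},M_b)$ with $b,b'\in I_{s'}$, so the characterization of connecting sets for invariant sets gives $M_{a'}\rightconnects{\cV}\sigma\rightconnects{\cV}M_a$ and $M_{b'}\rightconnects{\cV}\sigma\rightconnects{\cV}M_b$; crossing these produces $M_{a'}\rightconnects{\cV}M_b$ and $M_{b'}\rightconnects{\cV}M_a$, whence Lemma \ref{lem:MrConnectedIffMIsConnected} yields both $M(I_{s'})\rightconnects{\cV}M(I_s)$ and $M(I_s)\rightconnects{\cV}M(I_{s'})$, i.e.\ $sRs'$ and $s'Rs$, forcing $s=s'$ by antisymmetry of $\preceq$.

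With disjointness secured, the remaining axioms follow by transporting the data of $\cM$ along the blocks $I_s$. For a full solution $\varrho$, the Morse decomposition $\cM$ provides $r\leq r'$ with $\alpha(\varrho)\subseteq M_{r'}$ and $\omega(\varrho)\subseteq M_r$; choosing $s,s'$ with $r\in I_s$, $r'\in I_{s'}$ gives $\alpha(\varrho)\subseteq M(I_{s'})$ and $\omega(\varrho)\subseteq M(I_s)$ by Lemma \ref{lem:MrSubsetMIs}, while $C(M_{r'},M_r)\neq\emptyset$ (it contains $\image\varrho$) gives $M_{r'}\rightconnects{\cV}M_r$ by Proposition \ref{prop:MorseDecompositionProperties}\ref{prop:MorseDecompositionPropertiesEnum1}, hence $M(I_{s'})\rightconnects{\cV}M(I_s)$ by Lemma \ref{lem:MrConnectedIffMIsConnected}, i.e.\ $sRs'$ and thus $s\preceq s'$; this establishes \ref{def:MorseDecompositionEnum2}. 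For \ref{def:MorseDecompositionEnum3}, if $\alpha(\varrho)\cup\omega(\varrho)\subseteq M(I_s)$, then since the sets $\alpha(\varrho)\subseteq M_{r'}$ and $\omega(\varrho)\subseteq M_r$ are nonempty (as $K$ is finite) and meet $M(I_s)$, the disjointness half of Lemma \ref{lem:MrSubsetMIs} forces $r,r'\in I_s$, so that $\image\varrho\subseteq C(M_{r'},M_r)\subseteq M(I_s)$. The one place where the order of the argument matters is that disjointness must be proved before invoking the disjointness-dependent part of Lemma \ref{lem:MrSubsetMIs}; the rest is bookkeeping of the connection arrows and their directions.
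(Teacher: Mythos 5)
Your proposal is correct and follows essentially the same route as the paper's proof: the same cyclic chain of implications, the same use of Proposition \ref{prop:MorseDecompositionProperties}\ref{prop:MorseDecompositionPropertiesEnum1} and \ref{prop:MorseDecompositionPropertiesEnum4} for \ref{prop:ConditionsForMorseDecompositionEnumDecomp}~$\Rightarrow$~\ref{prop:ConditionsForMorseDecompositionEnumNoCycle}, the same concatenation-of-chains argument for \ref{prop:ConditionsForMorseDecompositionEnumNoCycle}~$\Rightarrow$~\ref{prop:ConditionsForMorseDecompositionEnumPreorder}, and the same direct crossing argument for disjointness followed by the transport of axioms \ref{def:MorseDecompositionEnum2} and \ref{def:MorseDecompositionEnum3} via Lemma \ref{lem:MrSubsetMIs} and Lemma \ref{lem:MrConnectedIffMIsConnected}. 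Your remark that disjointness must be established before invoking the disjointness-dependent half of Lemma \ref{lem:MrSubsetMIs} matches exactly the order in which the paper proceeds.
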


	\begin{proof}
		We will prove that \ref{prop:ConditionsForMorseDecompositionEnumDecomp}~$\Rightarrow$~\ref{prop:ConditionsForMorseDecompositionEnumNoCycle}~$\Rightarrow$~\ref{prop:ConditionsForMorseDecompositionEnumPreorder}~$\Rightarrow$~\ref{prop:ConditionsForMorseDecompositionEnumDecomp}.

		First, we see that the proof of \ref{prop:ConditionsForMorseDecompositionEnumDecomp}~$\Rightarrow$~\ref{prop:ConditionsForMorseDecompositionEnumNoCycle} follows from statements \ref{prop:MorseDecompositionPropertiesEnum1} and \ref{prop:MorseDecompositionPropertiesEnum4} of Proposition \ref{prop:MorseDecompositionProperties}. Indeed, if $\cM'$ is a Morse decomposition for some partial order $\leq$ on $\S$ and $s_0,s_1,...,s_n\subseteq\S$ is a sequence such that $s_0=s_n$ and \begin{gather*}
			M(I_{s_0})\rightconnects{\cV}M(I_{s_1})\rightconnects{\cV}\cdots\rightconnects{\cV} M(I_{s_n}),
		\end{gather*}
		then $s_n\leq s_{n-1}\leq\cdots\leq s_1\leq s_0 = s_n$, thus $s_0=s_1=\cdots =s_n$.

		We now show \ref{prop:ConditionsForMorseDecompositionEnumNoCycle}~$\Rightarrow$~\ref{prop:ConditionsForMorseDecompositionEnumPreorder}. Since each $M(I_s)$ is invariant, we can easily see that $R$ is reflexive, meaning that the preorder induced by $R$ is its transitive closure $\bar{R}$. To prove that it is a partial order, we only have to show that $\bar{R}$ is antisymmetric. Let $s,s'\in\S$ be such that $s\bar{R}s'$ and $s'\bar{R}s$. Then, there exists two sequences $s=s_0,s_1,...,s_m=s'$ and $s'=s_m,s_{m+1},...,s_{m+n}=s$ in $\S$ such that $s_{i-1}Rs_i$ for each $i=1,...,m+n$. By definition of $R$ and by \ref{prop:ConditionsForMorseDecompositionEnumNoCycle}, it follows that $s_0 = s_1 =\cdots = s_{m+n}$ and, in particular, $s=s'$.

		We finally prove that \ref{prop:ConditionsForMorseDecompositionEnumPreorder}~$\Rightarrow$~\ref{prop:ConditionsForMorseDecompositionEnumDecomp}. Consider the collection $\cM'$ indexed by the set $\S$ partially ordered by $\bar{R}$. We show that the three conditions of Definition \ref{def:MorseDecomposition} of a Morse decomposition are satisfied.
		\begin{enumerate}
			\item\label{proof:ConditionsForMorseDecompositionEnum1} We know that $M(I_s)$ is an isolated invariant set by Proposition \ref{prop:MorseSetIsolatedInvariant}. Thus, we only have to show that $M(I_s)\cap M(I_{s'})=\emptyset$ for all $s,s'\in\S$ such that $s\neq s'$.

			Suppose $\sigma\in M(I_s)\cap M(I_{s'})$. Then, there exists $r_1,r_2\in I_s$ and $r_1',r_2'\in I_{s'}$ such that $\sigma\in C(M_{r_1},M_{r_2})\cap C(M_{r_1'},M_{r_2'})$, meaning that $M_{r_1}\rightconnects{\cV}\sigma\rightconnects{\cV} M_{r_2}$ and $M_{r_1'}\rightconnects{\cV}\sigma\rightconnects{\cV} M_{r_2'}$. Hence, $M_{r_1}\rightconnects{\cV}\sigma\rightconnects{\cV} M_{r_2'}$ and $M_{r_1'}\rightconnects{\cV}\sigma\rightconnects{\cV} M_{r_2}$. From Lemma \ref{lem:MrConnectedIffMIsConnected}, it follows that $M(I_s)\rightconnects{\cV} M(I_{s'})$ and $M(I_{s'})\rightconnects{\cV} M(I_s)$. By the definition of the partial order on $\S$, we have $s'\leq s$ and $s\leq s'$, thus $s=s'$.

			\item For any solution $\varrho:\Z\rightarrow K$ of $\Pi_\cV$, since $\cM=\{M_r\ |\ r\in\P\}$ is a Morse decomposition, there are some $r\leq r'\in\P$ such that $\alpha(\varrho)\subseteq M_{r'}$ and $\omega(\varrho)\subseteq M_r$. Consider the unique indexes $s,s'\in\S$ such that $r\in I_s$ and $r'\in I_{s'}$. From Lemma \ref{lem:MrSubsetMIs}, we see that $\alpha(\varrho)\subseteq M_{r'}\subseteq M(I_{s'})$ and $\omega(\varrho)\subseteq M_{r}\subseteq M(I_{s})$, where $s\leq s'$ since we then have $M(I_{s'})\rightconnects{\cV}M(I_s)$.

			\item Now, consider a solution $\varrho:\Z\rightarrow K$ such that $\alpha(\varrho)\cup\omega(\varrho)\subseteq M(I_s)$ for some $s\in\S$. Since $\cM$ is a Morse decomposition, we know there exists some $r\leq r'\in\P$ such that $\alpha(\varrho)\subseteq M_{r'}$ and $\omega(\varrho)\subseteq M_r$. Also, we have $\image\varrho\subseteq C(M_{r'}, M_r)$ by definition of a connecting set. Moreover, we see that $M_r\cap M(I_s)\supseteq\omega(\varrho)\neq\emptyset$. Since the sets in $\cM'$ are mutually disjoint, as shown in \ref{proof:ConditionsForMorseDecompositionEnum1}, it follows from Lemma \ref{lem:MrSubsetMIs} that $r\in I_s$. Similarly, $r'\in I_s$. We conclude that $\image\varrho\subseteq C(M_{r'}, M_r)\subseteq M(I_s)$ by definition of a Morse set. \qedhere
		\end{enumerate}
	\end{proof}

	Furthermore, from Lemma \ref{lem:MrConnectedIffMIsConnected}, we see that $\cM'$-cycles can be characterized as follows.

	\begin{prop}\label{prop:MpathsCharacterization}
		Let $\cM = \{M_r\ |\ r\in\P\}$ be a Morse decomposition. Consider a partition $\{I_s\subseteq\P\ |\ s\in\S\}$ of $\P$ and the collection $\cM'=\{M(I_s)\ |\ s\in\S\}$. The sequence $s_0,s_1,...,s_n\in\S$ is a $\cM'$-path if and only if there exists a sequence $r_0',r_1,r_1',r_2,r_2',...,r_{n-1}',r_n\in\P$ such that
		\begin{itemize}[label=$\bullet$]
			\item $r_0'\in I_{s_0}$, $r_i,r_i'\in I_{s_i}$ for each $i=1,...,n-1$ and $r_n\in I_{s_n}$;
			\item $M_{r_{i-1}'}\rightconnects{\cV}M_{r_i}$ for each $i=1,...,n$.
		\end{itemize}
		Moreover, the sequence $s_0,s_1,...,s_n\in\S$ is a $\cM'$-cycle iff there exists such a sequence $r_0',r_1,r_1',r_2,r_2',...,r_{n-1}',r_n\in\P$ and $s_0=s_n$.
	\end{prop}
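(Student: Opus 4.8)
The plan is to reduce the proposition to a link-by-link application of Lemma~\ref{lem:MrConnectedIffMIsConnected}. By definition, the sequence $s_0,s_1,\ldots,s_n$ is a $\cM'$-path exactly when $M(I_{s_{i-1}})\rightconnects{\cV}M(I_{s_i})$ holds for each $i=1,\ldots,n$. Thus the whole statement amounts to rewriting each of these $n$ individual set-connections in terms of the constituent sets $M_r$, and then checking that the resulting witnesses can be indexed in the claimed alternating pattern.

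For the forward implication I would assume $s_0,\ldots,s_n$ is a $\cM'$-path. Applying Lemma~\ref{lem:MrConnectedIffMIsConnected} to the $i$-th connection $M(I_{s_{i-1}})\rightconnects{\cV}M(I_{s_i})$ (taking $s'=s_{i-1}$ and $s=s_i$ in the statement of that lemma) produces indices, which I name $r_{i-1}'\in I_{s_{i-1}}$ and $r_i\in I_{s_i}$, with $M_{r_{i-1}'}\rightconnects{\cV}M_{r_i}$. Running $i$ from $1$ to $n$ and collecting these witnesses yields the sequence $r_0',r_1,r_1',r_2,\ldots,r_{n-1}',r_n$ together with the required connections $M_{r_{i-1}'}\rightconnects{\cV}M_{r_i}$. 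The membership conditions then fall out directly: $r_0'\in I_{s_0}$ comes from the link $i=1$, $r_n\in I_{s_n}$ comes from the link $i=n$, and for each intermediate index the simplex $r_i\in I_{s_i}$ arises as the target of link $i$ while $r_i'\in I_{s_i}$ arises as the source of link $i+1$, so both lie in $I_{s_i}$ as claimed.

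The converse is the same argument read backwards: given a sequence $r_0',r_1,\ldots,r_{n-1}',r_n$ with the stated memberships and $M_{r_{i-1}'}\rightconnects{\cV}M_{r_i}$ for each $i$, Lemma~\ref{lem:MrConnectedIffMIsConnected} converts each such connection into $M(I_{s_{i-1}})\rightconnects{\cV}M(I_{s_i})$, which is precisely the definition of a $\cM'$-path. The $\cM'$-cycle characterization is then obtained by imposing the single extra condition $s_0=s_n$ on both descriptions, since a $\cM'$-cycle is by definition a $\cM'$-path with coincident endpoints.

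The only delicate point—bookkeeping rather than a genuine obstacle—is the indexing convention itself: each intermediate index $s_i$ is witnessed by two generally distinct simplices, namely $r_i$ as the endpoint of the connection entering $s_i$ and $r_i'$ as the starting point of the connection leaving $s_i$, and one must keep these separate while recording that both belong to $I_{s_i}$. Because Lemma~\ref{lem:MrConnectedIffMIsConnected} supplies one fresh witness pair per link and imposes no compatibility between consecutive links, there is no conflict, and the alternating sequence $r_0',r_1,r_1',\ldots,r_{n-1}',r_n$ is exactly what the $n$ applications of the lemma produce.
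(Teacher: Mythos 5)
Your argument is correct and is exactly the route the paper intends: the paper states this proposition as an immediate consequence of Lemma~\ref{lem:MrConnectedIffMIsConnected}, applied link by link to the connections $M(I_{s_{i-1}})\rightconnects{\cV}M(I_{s_i})$, without writing out the bookkeeping you supply. (Only a cosmetic slip: the $r_i$ are indices in $\P$, not simplices.)
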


	\section{Multidimensional discrete Morse functions}\label{sec:MDM}

	In this section, many notions of Morse-Forman theory \citep{Forman1998, Forman2002} are extended to vector-valued functions $f:K\rightarrow\R^\maxdim$. A few concepts on multidimensional discrete Morse (\mdm) functions discussed by~\citet{Allili2019}, which we also call multiparameter in reference to multiparameter persistence, are first presented. The gradient vector field of a \mdm function is then defined and some of its properties are outlined.

	\subsection{Main definitions}

	For the remaining of the article, we note $\preceq$ the partial order on $\R^\maxdim$ such that, for any $a=(a_1,...,a_\maxdim)$ and $b=(b_1,...,b_\maxdim)$ in $\R^\maxdim$,

	\begin{gather*}
		a\preceq b \Leftrightarrow a_i\leq b_i\text{ for each } i=1,...,\maxdim.
	\end{gather*}
	We also write $a\precneqq b$ whenever $a\preceq b$ and $a\neq b$. Moreover, for $f:K\rightarrow\R^\maxdim$ and $\sigma\in K_p$, consider
	\begin{align*}
		H_f(\sigma) &= \left\lbrace \beta\in K_{p+1}\ |\ \beta\supset\sigma\text{ and } f(\beta)\preceq f(\sigma)\right\rbrace;\\
		T_f(\sigma) &= \left\lbrace \alpha\in K_{p-1}\ |\ \alpha\subset\sigma\text{ and } f(\alpha)\succeq f(\sigma)\right\rbrace.
	\end{align*}
	When $f_1,...,f_\maxdim:K\rightarrow\R$ are clear from the context, we also write
	\begin{align*}
		H_i(\sigma) := H_{f_i}(\sigma) &= \left\lbrace\beta\in K_{p+1}\ |\ \beta \supset \sigma\text{ and } f_i(\beta)\leq f_i(\sigma)\right\rbrace,\\
		T_i(\sigma) := T_{f_i}(\sigma) &= \left\lbrace\alpha\in K_{p-1}\ |\ \alpha \subset \sigma\text{ and } f_i(\alpha)\geq f_i(\sigma)\right\rbrace.
	\end{align*}
	We immediately see that for all $f=(f_1,...,f_\maxdim):K\rightarrow\R^\maxdim$ and $\sigma\in K$, we have
	\begin{gather*}
		H_f(\sigma) = \bigcap_{i=1}^\maxdim H_i(\sigma),\qquad T_f(\sigma) = \bigcap_{i=1}^\maxdim T_i(\sigma),
	\end{gather*}
	thus $H_f(\sigma)\subseteq H_i(\sigma)$ and $T_f(\sigma)\subseteq T_i(\sigma)$ for all $i=1,...,\maxdim$. Those observations will come in handy later in this section.

	\pagebreak

	\begin{defn}[Multidimensional discrete Morse function]\label{def:MDM}
		A \emph{multiparameter} or \emph{multidimensional discrete Morse function} (or simply a \mdm function) defined on a simplicial complex $K$ is a function $f:K\rightarrow\R^\maxdim$ such that, for all $\sigma\in K_p$:
		\begin{enumerate}
			\item\label{def:MDMenum1} $\card H_f(\sigma) \leq 1$;
			\item\label{def:MDMenum2} $\card T_f(\sigma) \leq 1$;
			\item\label{def:MDMenum3} if $\beta^{(p+1)}\supset\sigma$ is not in $H_f(\sigma)$, then $f(\beta)\succneqq f(\sigma)$;
			\item\label{def:MDMenum4} if $\alpha^{(p-1)}\subset\sigma$ is not in $T_f(\sigma)$, then $f(\alpha)\precneqq f(\sigma)$.
		\end{enumerate}
	\end{defn}

	Conditions \ref{def:MDMenum1} and \ref{def:MDMenum2} are analogous to those of a one-dimensional discrete Morse function. Conditions \ref{def:MDMenum3} and \ref{def:MDMenum4}, on the other hand, need to be added in the multidimensional setting to ensure the values of $f$ are comparable at least for simplices that are facets and cofacets of each other.

	Furthermore, a key concept in discrete Morse theory is that of critical points. They can be defined just as in the original setting.

	\begin{defn}[Critical point]
		Let $f:K\rightarrow\R^\maxdim$ be \mdm. A simplex $\sigma\in K_p$ is said to be a \emph{critical simplex} or a \emph{critical point of index $p$ of $f$} if
		\begin{align*}
			\card H_f(\sigma) = \card T_f(\sigma) = 0.
		\end{align*}
		A simplex that is not critical is \emph{regular}.
	\end{defn}

	It was shown by \citet{Allili2019} that, as in the one-dimensional setting, for all \mdm function $f:K\rightarrow\R^\maxdim$ and all $\sigma\in K$, one of the sets $H_f(\sigma)$ or $T_f(\sigma)$ must have cardinality zero. The next result follows.

	\begin{prop}
		Let $f:K\rightarrow\R^\maxdim$ be \mdm. Every $\sigma\in K$ verifies exactly one of these conditions:
		\begin{itemize}
			\item $\sigma$ is critical;
			\item $\card H_f(\sigma) =0$ and $\card T_f(\sigma) = 1$;
			\item $\card H_f(\sigma) =1$ and $\card T_f(\sigma) = 0$.
		\end{itemize}
	\end{prop}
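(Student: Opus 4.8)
The plan is to read this off directly from the result attributed to \citet{Allili2019} stated just above, combined with the two cardinality bounds built into the definition of a \mdm function. First I would record that Definition~\ref{def:MDM}\ref{def:MDMenum1}--\ref{def:MDMenum2} forces $\card H_f(\sigma)\in\{0,1\}$ and $\card T_f(\sigma)\in\{0,1\}$ for every $\sigma\in K$, so that the pair $\left(\card H_f(\sigma),\card T_f(\sigma)\right)$ necessarily lies in the four-element set $\{0,1\}\times\{0,1\}$.

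Next I would invoke the cited fact that, for each $\sigma$, at least one of $H_f(\sigma)$ and $T_f(\sigma)$ has cardinality zero. This rules out the corner $(1,1)$ and leaves exactly the three possibilities $(0,0)$, $(0,1)$ and $(1,0)$. The first of these is, by the very definition of a critical simplex, the condition $\card H_f(\sigma)=\card T_f(\sigma)=0$; the second and third coincide verbatim with the two remaining conditions listed in the statement. Hence every $\sigma$ satisfies one of the three conditions.

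Finally, to obtain the word \emph{exactly}, I would note that the three admissible pairs are pairwise distinct, so the three conditions are mutually exclusive and no $\sigma$ can satisfy more than one. The whole argument is thus a case check on a $2\times 2$ table after the $(1,1)$ entry has been discarded, and I anticipate no genuine obstacle: the only substantive input is the cited result that one of the two sets is always empty, whose proof resides in the earlier work of \citet{Allili2019} and need not be reproduced here.
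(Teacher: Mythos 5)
Your proposal matches the paper exactly: the paper gives no separate proof but derives the proposition immediately from the cited fact in \citet{Allili2019} that at least one of $H_f(\sigma)$, $T_f(\sigma)$ is empty, together with the cardinality bounds $\card H_f(\sigma)\leq 1$ and $\card T_f(\sigma)\leq 1$ from Definition~\ref{def:MDM}. Your case check on the $2\times 2$ table is just a slightly more explicit write-up of the same argument.
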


	This last observation leads to the definition of the gradient vector field of a \mdm function.

	\begin{defn}[Gradient vector field]
		The \emph{gradient vector field}, or simply the \emph{gradient}, of a \mdm function $f:K\rightarrow\R^\maxdim$ is the discrete vector field $\cV$ such that $\dom\cV = \left\lbrace \sigma\in K\ |\ \card T_f(\sigma) = 0\right\rbrace$ and, for all $\sigma\in\dom\cV$,
		\begin{align*}
			\cV(\sigma) = \begin{cases}
				\sigma & \text{ if } \card H_f(\sigma)=0,\\
				\beta & \text{ if } H_f(\sigma) = \{\beta\}\text{ for some }\beta\supset\sigma.
			\end{cases}
		\end{align*}
	\end{defn}
	We could easily verify that a gradient vector field $\cV$ of a \mdm function $f$ as defined above is indeed a discrete vector field. Moreover:
	\begin{itemize}
		\item the fixed points of $\cV$ are the critical points of $f$;
		\item $\image\cV\backslash\Fix\cV = \left\lbrace \sigma\in K\ |\ \card T_f(\sigma)=1\right\rbrace$;
		\item $\dom\cV\backslash\Fix\cV = \left\lbrace \sigma\in K\ |\ \card H_f(\sigma)=1\right\rbrace$.
	\end{itemize}
	Thus, we can see that the gradient vector field defined here, although it is seen as a partial map $\cV:K\nrightarrow K$, follows the idea of the gradient of a real-valued discrete Morse function as defined by \citet{Forman1998}. Indeed, every $\sigma\in K$ such that $\card T_f(\sigma)=1$ is at the head of an arrow of $\cV$ (the element of $T_f(\sigma)$ being at the tail of that arrow) while every $\sigma\in K$ such that $\card H_f(\sigma)=1$ is at the tail of an arrow (the element of $H_f(\sigma)$ being at the head of that arrow). The main difference here is that the critical points of $f$ are considered as fixed points of its gradient field, whereas they were simply not included in the original definition.

	\begin{ex}\label{ex:MDMfunction}
		Consider $f=(f_1,f_2):K\rightarrow\R^2$, the function defined as in Figure \ref{fig:MDMfunctionExample}. We could verify that it is \mdm and that its gradient field is as represented in the figure. Notice that $f_2$ is not discrete Morse. Indeed, we can see that there are some $\sigma\in K$ for which $\card H_2(\sigma) = 2$ or $\card T_2(\sigma) = 2$.

		\begin{figure}[h]
			\centering
			\begin{tikzpicture}[scale=2.5]
				\coordinate (A) at (0,0);
				\coordinate (B) at (1,1.732);
				\coordinate (C) at (2,0);
				\coordinate (D) at (3,1.732);
				\coordinate (E) at (4,0);

				\coordinate (AB) at ($0.5*(A)+0.5*(B)$);
				\coordinate (AC) at ($0.5*(A)+0.5*(C)$);
				\coordinate (BC) at ($0.5*(B)+0.5*(C)$);
				\coordinate (BD) at ($0.5*(B)+0.5*(D)$);
				\coordinate (CD) at ($0.5*(C)+0.5*(D)$);
				\coordinate (CE) at ($0.5*(C)+0.5*(E)$);
				\coordinate (DE) at ($0.5*(D)+0.5*(E)$);

				\coordinate (ABC) at ($0.333*(A)+0.333*(B)+0.333*(C)$);
				\coordinate (BCD) at ($0.333*(B)+0.333*(C)+0.333*(D)$);
				\coordinate (CDE) at ($0.333*(C)+0.333*(D)+0.333*(E)$);

				\fill[black!5] (A) -- (B) -- (D) -- (E) -- cycle;
				\fill[red!20] (B) -- (C) -- (D) -- cycle;
				\draw (C) -- (A) -- (B) -- (C) -- (E) -- (D);
				\draw[red,very thick] (B) -- (D)node{$\bullet$} -- (C);
				\node[red] at (A){$\bullet$};
				\node at (B){$\bullet$};
				\node at (C){$\bullet$};
				\node at (E){$\bullet$};
				\draw[ultra thick, -latex] (B) -- (AB);
				\draw[ultra thick, -latex] (BC) -- (ABC);
				\draw[ultra thick, -latex] (C) -- (AC);
				\draw[ultra thick, -latex] (DE) -- (CDE);
				\draw[ultra thick, -latex] (E) -- (CE);
				\node[below left] at (A) {$(0,0)$};
				\node[above left] at (B) {$(1,1)$};
				\node[below] at (C) {$(2,0)$};
				\node[above right] at (D) {$(4,1)$};
				\node[below right] at (E) {$(6,0)$};
				\node[left] at (AB) {$(1,1)$};
				\node[below] at (AC) {$(1,0)$};
				\node[right] at (BC) {$(3,1)$};
				\node[above] at (BD) {$(4,2)$};
				\node[left] at (CD) {$(5,1)$};
				\node[below] at (CE) {$(5,0)$};
				\node[right] at (DE) {$(7,1)$};
				\node[below] at (ABC) {$(3,1)$};
				\node[above] at (BCD) {$(5,2)$};
				\node[below] at (CDE) {$(7,1)$};
			\end{tikzpicture}
			\caption{A \mdm function and its gradient vector field. The critical simplices are represented in red}\label{fig:MDMfunctionExample}
		\end{figure}
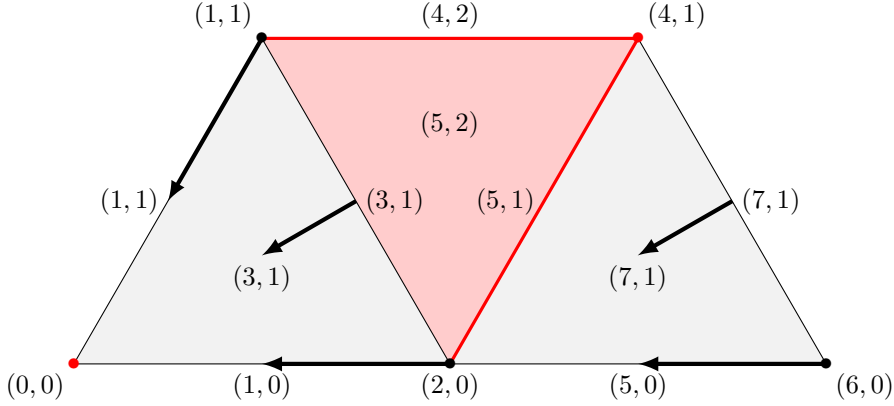
	\end{ex}

	\subsection{Vector-valued functions with discrete Morse components}

	Consider a vector-valued function $f=(f_1,...,f_\maxdim):K\rightarrow\R^\maxdim$. From Example \ref{ex:MDMfunction}, we can see that $f$ being \mdm does not guarantee each $f_i$ is discrete Morse. Conversely, $f$ is not always \mdm even if all its components are discrete Morse. Indeed, if each $f_i$ is discrete Morse, we could easily verify that $f$ satisfies conditions \ref{def:MDMenum1} and \ref{def:MDMenum2} of the definition of a \mdm function, but not necessarily conditions \ref{def:MDMenum3} and \ref{def:MDMenum4}. Actually, we could show that $\maxdim$ discrete Morse functions $f_1,...,f_\maxdim$ form a \mdm function $f=(f_1,...,f_\maxdim)$ if and only if for every pair of facet and cofacet $\alpha^{(p)}\subset\beta^{(p+1)}$, the vectors $f(\alpha)$ and $f(\beta)$ are comparable in the partial order $\preceq$.

	Whenever a \mdm function $f=(f_1,...,f_\maxdim)$ is such that each of its component $f_i$ is discrete Morse, we have the following result on the gradient vector fields of $f$ and each $f_i$.

	\begin{prop}\label{prop:GradientMorseComponents}
		Let $f=(f_1,...,f_\maxdim):K\rightarrow\R^\maxdim$ be \mdm and each $f_i$ be discrete Morse. Consider $\cV$ and $\cV_i$, the gradient vector fields of $f$ and $f_i$, respectively, for each $i=1,...,\maxdim$. Then, $\dom\cV = \left\lbrace\sigma\in K\ |\ \card\left(\bigcap_{i=1}^\maxdim T_i(\sigma)\right) = 0 \right\rbrace$ and
		\begin{align*}
			\cV(\sigma) = \begin{cases}
				\beta & \begin{aligned}&\text{ if }\cV_i(\sigma)\text{ is defined for each }i=1,...,\maxdim\\ &\text{ and } \cV_1(\sigma)= \cV_2(\sigma) = \cdots = \cV_\maxdim(\sigma) =: \beta \supset\sigma,\end{aligned}\\
				\sigma & \text{ otherwise.}
			\end{cases}
		\end{align*}
	\end{prop}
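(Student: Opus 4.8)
The plan is to reduce the statement to the defining formulas for the gradients $\cV$ and $\cV_i$ together with the two intersection identities $H_f(\sigma)=\bigcap_{i=1}^\maxdim H_i(\sigma)$ and $T_f(\sigma)=\bigcap_{i=1}^\maxdim T_i(\sigma)$ recorded just before Definition~\ref{def:MDM}. The description of $\dom\cV$ is then immediate: by definition $\dom\cV=\left\lbrace\sigma\in K\ |\ \card T_f(\sigma)=0\right\rbrace$, and substituting the second identity yields exactly the claimed set.

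For the formula giving $\cV(\sigma)$, I would fix $\sigma\in\dom\cV$ and recall that, by definition of the gradient of $f$, one has $\cV(\sigma)=\beta$ precisely when $H_f(\sigma)=\{\beta\}$ for a cofacet $\beta\supset\sigma$, and $\cV(\sigma)=\sigma$ precisely when $\card H_f(\sigma)=0$. It therefore suffices to prove the equivalence
\[
H_f(\sigma)=\{\beta\}\text{ for some }\beta\supset\sigma\quad\Longleftrightarrow\quad\cV_i(\sigma)\text{ is defined and equals a common cofacet }\beta\supset\sigma\text{ for every }i.
\]

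The backward implication is routine bookkeeping: if $\cV_i(\sigma)=\beta\supset\sigma$ for each $i$, then by definition of $\cV_i$ we have $H_i(\sigma)=\{\beta\}$ and $\card T_i(\sigma)=0$ for every $i$, whence $H_f(\sigma)=\bigcap_{i=1}^\maxdim H_i(\sigma)=\{\beta\}$ and $T_f(\sigma)=\bigcap_{i=1}^\maxdim T_i(\sigma)=\emptyset$, so that $\sigma\in\dom\cV$ and $\cV(\sigma)=\beta$. The forward implication is the crux. Assume $\bigcap_{i=1}^\maxdim H_i(\sigma)=\{\beta\}$. Then $\beta\in H_i(\sigma)$ for every $i$, and since each $f_i$ is discrete Morse we also have $\card H_i(\sigma)\leq 1$; the two facts together force $H_i(\sigma)=\{\beta\}$ for every $i$. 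To see that each $\cV_i(\sigma)$ is in fact defined, I would invoke the one-dimensional instance of the result cited from \citet{Allili2019} (applied to the discrete Morse function $f_i$): one of $H_i(\sigma)$, $T_i(\sigma)$ has cardinality zero. Since $\card H_i(\sigma)=1$, this forces $\card T_i(\sigma)=0$, so $\sigma\in\dom\cV_i$ and $\cV_i(\sigma)=\beta$, as required.

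Finally, the two branches of the gradient of $f$ are mutually exclusive and exhaustive on $\dom\cV$, so the \emph{otherwise} case of the displayed piecewise formula---the situation in which the top condition fails---coincides with $\card H_f(\sigma)=0$ and hence with $\cV(\sigma)=\sigma$, requiring no separate argument. The only genuine content lies in the forward implication, where discrete-Morseness of each $f_i$ is used twice: once via $\card H_i(\sigma)\leq 1$ to pin down $H_i(\sigma)=\{\beta\}$, and once via the dichotomy ``one of $H_i(\sigma),T_i(\sigma)$ is empty'' to conclude $\card T_i(\sigma)=0$. Everything else is manipulation of the intersection identities, which I expect to be straightforward.
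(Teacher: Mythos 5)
Your proposal is correct and follows essentially the same route as the paper's proof: reduce everything to the intersection identities $H_f(\sigma)=\bigcap_i H_i(\sigma)$, $T_f(\sigma)=\bigcap_i T_i(\sigma)$ and the bound $\card H_i(\sigma)\leq 1$ to get $H_f(\sigma)=\{\beta\}$ iff $H_i(\sigma)=\{\beta\}$ for all $i$. The only difference is that you explicitly verify $\sigma\in\dom\cV_i$ via the dichotomy ``one of $H_i(\sigma),T_i(\sigma)$ is empty,'' a detail the paper leaves implicit in the phrase ``by definition of each $\cV_i$''; this is a harmless (indeed slightly more careful) elaboration, not a different argument.
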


	\begin{proof}
		First, for each $\sigma\in K$, we know that $T_f(\sigma) = \bigcap_{i=1}^\maxdim T_i(\sigma)$. Hence, by definition of $\dom\cV$, we have directly $\dom\cV = \left\lbrace\sigma\in K\ |\ \card\left(\bigcap_{i=1}^\maxdim T_i(\sigma)\right) = 0\right\rbrace$.

		Now, consider $\sigma\in\dom\cV$. By definition of the gradient $\cV$, we have that $\cV(\sigma)~=~\beta$ if $H_f(\sigma)~=~\{\beta\}$ for some $\beta\supset\sigma$ and $\cV(\sigma)=\sigma$ otherwise. Also, we know that $H_f(\sigma)=\bigcap_{i=1}^\maxdim H_i(\sigma)$ and $\card H_i(\sigma)\leq 1$ for each $i=1,...,\maxdim$, so $H_f(\sigma)=\{\beta\}$ if and only if $H_i(\sigma)=\{\beta\}$ for each $i=1,...,\maxdim$. Hence, by definition of each $\cV_i$, we have $\cV(\sigma)=\beta$ for some $\beta\supset\sigma$ if and only if $\cV_i(\sigma)=\beta$ for each $i=1,...,\maxdim$.
	\end{proof}

	Put simply, this last result states that, for a \mdm function $f$ with discrete Morse components $f_1,...,f_\maxdim$, there is an arrow in $\cV$ going from a simplex $\sigma$ to its cofacet $\beta$ if and only if there is an arrow in each $\cV_i$ going from $\sigma$ to $\beta$.

	\begin{ex}\label{ex:GradientMorseComponents}
		Assume $f=(f_1,f_2)$ is \mdm and its components $f_1$ and $f_2$ are discrete Morse. Suppose the gradient vector fields of $f_1$ and $f_2$ are the ones represented in orange and blue in Figure \ref{fig:GradientMorseComponentsExample1}, where the dots represent fixed points. Then, the gradient of $f$ has to be the one shown in Figure \ref{fig:GradientMorseComponentsExample2}, where the critical simplices are shown in red.

		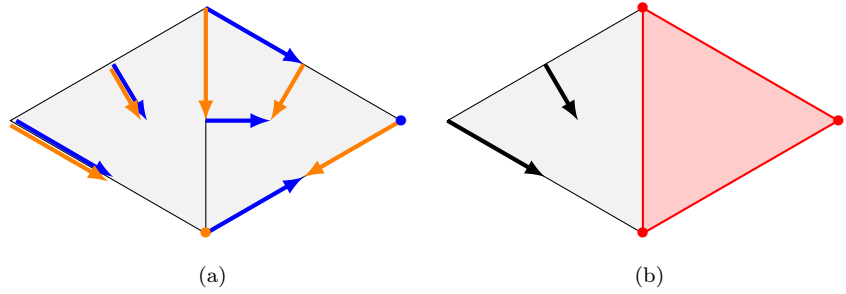
\begin{figure}[h]
			\centering
			\begin{subfigure}{0.45\textwidth}
				\centering
				\begin{tikzpicture}[scale=3]
					\fill[black!5] (0,0) -- (-0.866,0.5) -- (0,1) -- (0.866,0.5) -- cycle;
					\draw (0,0) -- (0,1) -- (-0.866,0.5) -- (0,0) -- (0.866,0.5) -- (0,1);
					\draw[ultra thick, blue, -latex] (-0.84,0.5) -- (-0.41,0.25);
					\draw[ultra thick, blue, -latex] (-0.41,0.75) -- (-0.26,0.5);
					\draw[ultra thick, blue, -latex] (0,0) -- (0.433,0.25);
					\draw[ultra thick, blue, -latex] (0,0.5) -- (0.287,0.5);
					\draw[ultra thick, blue, -latex] (0,1) -- (0.433,0.75);
					\draw[ultra thick, orange, -latex] (-0.866,0.48) -- (-0.433,0.23);
					\draw[ultra thick, orange, -latex] (-0.42,0.73) -- (-0.287,0.5);
					\draw[ultra thick, orange, -latex] (0,1) -- (0,0.5);
					\draw[ultra thick, orange, -latex] (0.433,0.75) -- (0.287,0.5);
					\draw[ultra thick, orange, -latex] (0.866,0.5) -- (0.433,0.25);
					\draw[blue] (0.866,0.5)node{$\bullet$};
					\draw[orange] (0,0)node{$\bullet$};
				\end{tikzpicture}
				\caption{}\label{fig:GradientMorseComponentsExample1}
			\end{subfigure}
			\begin{subfigure}{0.45\textwidth}
				\centering
				\begin{tikzpicture}[scale=3]
					\fill[black!5] (0,0) -- (0,1) -- (-0.866,0.5) -- cycle;
					\fill[red!20] (0,0) -- (0,1) -- (0.866,0.5) -- cycle;
					\draw (0,1) -- (-0.866,0.5) -- (0,0);
					\draw[red,thick] (0,0)node{$\bullet$} -- (0,1)node{$\bullet$} -- (0.866,0.5)node{$\bullet$} -- cycle;
					\draw[ultra thick, -latex] (-0.866,0.5) -- (-0.433,0.25);
					\draw[ultra thick, -latex] (-0.433,0.75) -- (-0.287,0.5);
				\end{tikzpicture}
				\caption{}\label{fig:GradientMorseComponentsExample2}
			\end{subfigure}
			\caption{In \subref{fig:GradientMorseComponentsExample1}, gradient fields of discrete Morse functions $f_1$ and $f_2$ are represented in orange and blue with critical simplices represented with dots. In \subref{fig:GradientMorseComponentsExample2}, the gradient field of \mdm function $f~=~(f_1,f_2)$ with critical simplices in red }\label{fig:GradientMorseComponentsExample}
		\end{figure}
	\end{ex}

	\subsection{Acyclicity of a gradient vector field}

	As in the one-dimensional setting, gradient vector fields of \mdm functions are necessarily acyclic and, conversely, every acyclic vector field represents the gradient of some \mdm function.

	\begin{lem}\label{lem:ComponentIdenticalVectorFieldsMDM}
		Let $f_1,...,f_\maxdim:K\rightarrow\R$ be discrete Morse functions. If $f_1,...,f_\maxdim$ have identical gradient vector fields $\cV_1 = \cV_2 = \cdots = \cV_\maxdim =:\cV$, then $f=(f_1,...,f_\maxdim)$ is \mdm and its gradient is $\cV$.
	\end{lem}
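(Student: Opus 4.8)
The plan is to reduce the whole statement to a single observation: under the hypothesis, the sets $H_i(\sigma)$ and $T_i(\sigma)$ do not depend on the index $i$, and are in fact completely determined by the common gradient $\cV$. Granting this, every requirement of the lemma becomes immediate. Since each $f_i:K\to\R$ is discrete Morse, it is a one-dimensional \mdm function whose gradient is $\cV_i=\cV$, so the bullet-point characterizations following the definition of the gradient apply to each $f_i$ and give
\begin{align*}
	H_i(\sigma) &= \begin{cases} \{\cV(\sigma)\} & \text{if } \sigma\in\dom\cV\backslash\Fix\cV,\\ \emptyset & \text{otherwise,}\end{cases}\\
	T_i(\sigma) &= \begin{cases} \{\cV^{-1}(\sigma)\} & \text{if } \sigma\in\image\cV\backslash\Fix\cV,\\ \emptyset & \text{otherwise.}\end{cases}
\end{align*}
The right-hand sides involve only $\cV$, so $H_i(\sigma)$ and $T_i(\sigma)$ are the same for every $i$.

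To justify the reconstruction, for $H_i$ the first line is exactly the definition of the gradient together with the identity $\dom\cV\backslash\Fix\cV=\{\sigma\mid\card H_i(\sigma)=1\}$; whenever $\card H_i(\sigma)$ is not $1$ it is $0$, giving the empty set. For $T_i$ I would first note that $\card T_i(\sigma)=1$ holds exactly when $\sigma\in\image\cV\backslash\Fix\cV$, and then identify the unique element of $T_i(\sigma)$ as $\cV^{-1}(\sigma)$: setting $\alpha=\cV^{-1}(\sigma)$ we have $\cV(\alpha)=\sigma$, hence $H_i(\alpha)=\{\sigma\}$ by definition of the gradient, so $f_i(\alpha)\ge f_i(\sigma)$ with $\alpha\subset\sigma$ a facet, i.e.\ $\alpha\in T_i(\sigma)$; as $\card T_i(\sigma)=1$ this is the only element.

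With the $i$-independence in hand I would verify the four conditions of Definition \ref{def:MDM}. Because $H_f(\sigma)=\bigcap_{i=1}^\maxdim H_i(\sigma)=H_i(\sigma)$ and likewise $T_f(\sigma)=T_i(\sigma)$, conditions \ref{def:MDMenum1} and \ref{def:MDMenum2} follow from $\card H_i(\sigma)\le 1$ and $\card T_i(\sigma)\le 1$. For condition \ref{def:MDMenum3}, a cofacet $\beta\supset\sigma$ with $\beta\notin H_f(\sigma)$ satisfies $\beta\notin H_i(\sigma)$ for every $i$, so $f_i(\beta)>f_i(\sigma)$ for every $i$ (the total order on $\R$ makes each inequality strict), whence $f(\beta)\succneqq f(\sigma)$; condition \ref{def:MDMenum4} is the symmetric statement for facets and $T_f$. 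Thus $f$ is \mdm. Finally, the gradient of $f$ coincides with $\cV$: its domain is $\{\sigma\mid\card T_f(\sigma)=0\}=\{\sigma\mid\card T_i(\sigma)=0\}=\dom\cV$, and on this domain the equality $H_f(\sigma)=H_i(\sigma)$ forces $\cV$ and the gradient of $f$ to take the same value at each $\sigma$. Alternatively, once $f$ is known to be \mdm, this last equality is precisely Proposition \ref{prop:GradientMorseComponents} applied with $\cV_1=\cdots=\cV_\maxdim=\cV$.

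The only genuinely substantive step is the reconstruction of $H_i$ and $T_i$ from $\cV$, and within it the identification of the unique element of $T_i(\sigma)$ as $\cV^{-1}(\sigma)$. Everything afterward is bookkeeping resting on the identities $H_f=\bigcap_i H_i$, $T_f=\bigcap_i T_i$ and on the strictness of the inequalities that the total order on $\R$ supplies for conditions \ref{def:MDMenum3} and \ref{def:MDMenum4}.
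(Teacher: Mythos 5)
Your proof is correct and follows essentially the same route as the paper: both arguments reduce the lemma to showing that $H_i(\sigma)$ and $T_i(\sigma)$ are independent of $i$ (you make this explicit by writing them as functions of $\cV$, using the same key observation that $T_i(\sigma)=\{\alpha\}$ corresponds to $\cV(\alpha)=\sigma$ via $H_i(\alpha)=\{\sigma\}$), and then both verify the four conditions of Definition~\ref{def:MDM} from $H_f=\bigcap_i H_i=H_i$ and $T_f=\bigcap_i T_i=T_i$, with the strictness in conditions \ref{def:MDMenum3} and \ref{def:MDMenum4} coming from the total order on $\R$.
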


	\begin{proof}
		Let $\sigma\in K$. Notice that if $\cV_1 = \cV_2 = \cdots = \cV_\maxdim$, then we must have $H_1(\sigma)~=~H_2(\sigma)~=~\cdots~=~H_\maxdim(\sigma)$ and $T_1(\sigma) = T_2(\sigma) = \cdots = T_\maxdim(\sigma)$. Indeed, if $H_j(\sigma)=\{\beta\}$ for some $j=1,...,\maxdim$, then $\cV_j(\sigma) = \beta$, so $\cV_i(\sigma) = \beta$ for all $i=1,...,\maxdim$. It follows that $H_1(\sigma) = H_2(\sigma) = \cdots = H_\maxdim(\sigma)$. Moreover, if $T_j(\sigma) = \{\alpha\}$ for some $j=1,...,\maxdim$, then $H_j(\alpha) = \{\sigma\}$. Thus, $H_i(\alpha) = \{\sigma\}$ and $T_i(\sigma) = \{\alpha\}$ for all $i=1,...,\maxdim$, hence $T_1(\sigma) = T_2(\sigma) = \cdots = T_\maxdim(\sigma)$.

		From these observations and using Proposition \ref{prop:GradientMorseComponents}, assuming $f$ is indeed \mdm, we easily see that the gradient $\cV$ of $f$ is such that $\dom\cV=\dom\cV_i$ for all $i=1,...,\maxdim$ and $\cV(\sigma)=\cV_i(\sigma)$ for all $\sigma\in\dom\cV$, so $\cV=\cV_1=\cV_2=\cdots=\cV_\maxdim$.

		All that is left to prove now is that $f$ is a \mdm function. Because $H_f(\sigma) = \bigcap_{i=1}^\maxdim H_i(\sigma)$ and $T_f(\sigma) = \bigcap_{i=1}^\maxdim T_i(\sigma)$ for all $\sigma\in K$, we see that $H_f(\sigma) = H_i(\sigma)$ and $T_f(\sigma) = T_i(\sigma)$ for every $i=1,...,\maxdim$, so the four conditions of a \mdm function follow.
		\begin{itemize}
			\item[\ref{def:MDMenum1}] $\card H_f(\sigma) = \card H_i(\sigma)\leq 1$ for any $i\in\{1,...,\maxdim\}$.
			\item[\ref{def:MDMenum2}] $\card T_f(\sigma) = \card T_i(\sigma)\leq 1$ for any $i\in\{1,...,\maxdim\}$.
			\item[\ref{def:MDMenum3}] If $\beta^{(p+1)}\supset\sigma$ is not in $H_f(\sigma)$, since $H_f(\sigma)=H_i(\sigma)$ for each $i=1,...,\maxdim$, it follows that $\beta\notin H_i(\sigma)$ for each $i=1,...,\maxdim$. Consequently, $f_i(\beta) > f_i(\sigma)$ for all $i=1,...,\maxdim$, so $f(\beta)\succneqq f(\sigma)$.
			\item[\ref{def:MDMenum4}] Is shown similarly to \ref{def:MDMenum3}.\qedhere
		\end{itemize}
	\end{proof}

	\begin{prop}\label{prop:AcyclicityGradient}
		Let $\cV$ be a discrete vector field on a simplicial complex $K$. The field $\cV$ is acyclic if and only if $\cV$ is the gradient of some \mdm function $f:K\rightarrow\R^\maxdim$.
	\end{prop}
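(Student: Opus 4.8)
The plan is to establish the two implications separately. The implication that a gradient of a \mdm function is acyclic is a direct monotonicity argument carried out in the partial order $\preceq$, whereas the converse is reduced to the classical one-dimensional theorem of Forman combined with Lemma \ref{lem:ComponentIdenticalVectorFieldsMDM}, which lets us promote a single real-valued discrete Morse function to a vector-valued one by repetition.

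First I would prove that if $\cV$ is the gradient of a \mdm function $f$, then $\cV$ is acyclic. Take any nontrivial $\cV$-path $\alpha_0^{(p)},\beta_0^{(p+1)},\alpha_1^{(p)},\dots,\beta_{n-1}^{(p+1)},\alpha_n^{(p)}$. For each $i$, the relation $\cV(\alpha_i)=\beta_i\neq\alpha_i$ forces $H_f(\alpha_i)=\{\beta_i\}$, hence $f(\beta_i)\preceq f(\alpha_i)$. Moreover, since $\beta_i\in\image\cV\setminus\Fix\cV$ we have $\card T_f(\beta_i)=1$, and as $\alpha_i=\cV^{-1}(\beta_i)$ satisfies $f(\alpha_i)\succeq f(\beta_i)$ it is precisely the unique element of $T_f(\beta_i)$. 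Thus the facet $\alpha_{i+1}\neq\alpha_i$ of $\beta_i$ lies outside $T_f(\beta_i)$, and condition \ref{def:MDMenum4} gives $f(\alpha_{i+1})\precneqq f(\beta_i)$. Chaining these inequalities yields $f(\alpha_{i+1})\precneqq f(\alpha_i)$ for every $i$, so $f(\alpha_n)\precneqq f(\alpha_0)$ whenever $n\geq 1$. In particular $f(\alpha_0)\neq f(\alpha_n)$, so $\alpha_0\neq\alpha_n$ and no nontrivial closed $\cV$-path can exist.

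For the converse, suppose $\cV$ is acyclic. When $\maxdim=1$, a \mdm function is exactly a real-valued discrete Morse function (conditions \ref{def:MDMenum3} and \ref{def:MDMenum4} being automatic over $\R$ by totality of the order), so this case is precisely Forman's theorem \citep{Forman1998}: an acyclic discrete vector field is the gradient of some discrete Morse function $g:K\rightarrow\R$. For general $\maxdim$, I would simply set $f_1=f_2=\cdots=f_\maxdim:=g$ and define $f=(g,\dots,g):K\rightarrow\R^\maxdim$. Since these components share the single common gradient field $\cV$, Lemma \ref{lem:ComponentIdenticalVectorFieldsMDM} immediately yields that $f$ is \mdm and that its gradient is $\cV$, which closes the argument.

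The argument is short once Lemma \ref{lem:ComponentIdenticalVectorFieldsMDM} is in hand, so the only delicate point is the bookkeeping when invoking Forman's classical statement: his gradient is a partial matching that omits critical cells, whereas the gradient here records them as fixed points of $\cV$. I expect the main care to go into verifying that this cosmetic difference affects neither the notion of a $\cV$-path nor that of acyclicity, so that Forman's $g$ does have $\cV$ as its gradient in the present convention; after that, the repetition trick via Lemma \ref{lem:ComponentIdenticalVectorFieldsMDM} does all the remaining work.
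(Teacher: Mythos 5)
Your proposal is correct and follows essentially the same route as the paper: the forward implication via the chained inequalities $f(\alpha_0)\succeq f(\beta_0)\succneqq f(\alpha_1)\succeq\cdots\succneqq f(\alpha_n)$ along a $\cV$-path, and the converse via Forman's one-dimensional theorem combined with Lemma \ref{lem:ComponentIdenticalVectorFieldsMDM} applied to $f=(g,\dots,g)$. Your write-up is in fact slightly more careful than the paper's in justifying why $\alpha_{i+1}\notin T_f(\beta_i)$ forces the strict inequality, but the argument is the same.
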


	\begin{proof}
		First, suppose $\cV$ is acyclic. It is known that for any acyclic discrete vector field $\cV$, there exists a discrete Morse function $g:K\rightarrow\R$ for which $\cV$ is the gradient \citep{Forman1998}. From Lemma \ref{lem:ComponentIdenticalVectorFieldsMDM}, it follows that $f=(g,g,...,g):K\rightarrow\R^\maxdim$ is \mdm and its gradient vector field is $\cV$.

		Now, let $\cV$ be the gradient of some \mdm function $f:K\rightarrow\R^\maxdim$. For all $\cV$-path $\alpha_0^{(p)},\beta_0^{(p+1)},\alpha_1^{(p)},...,\beta_{r-1}^{(p+1)},\alpha_r^{(p)}$, we see that
		\begin{align*}
			f(\alpha_0)\succeq f(\beta_0) \succneqq f(\alpha_1) \succeq f(\beta_1) \succneqq \cdots \succeq f(\beta_{r-1})\succneqq f(\alpha_r).
		\end{align*}
		If this $\cV$-path is non-trivial, i.e. $r\geq 1$, it follows that $f(\alpha_0)\neq f(\alpha_r)$, thus $\alpha_0\neq\alpha_r$. Hence, a non-trivial $\cV$-path cannot be closed, meaning that $\cV$ is acyclic.
	\end{proof}

	This last proposition is well known in the one-dimensional setting. Actually, the reasoning used in the second part of the previous proof is also used to prove the analogous result in the original theory.

	Furthermore, notice that there is no restriction on the dimension of the codomain $\R^\maxdim$ of the \mdm function in Proposition \ref{prop:AcyclicityGradient}. Therefore, for any integers $\maxdim,\maxdim'\geq 1$, there exists a \mdm function $f:K\rightarrow\R^\maxdim$ having $\cV$ as a gradient field if and only if $\cV$ is acyclic if and only if there exists a \mdm function $g:K\rightarrow\R^{\maxdim'}$ having $\cV$ as a gradient field, thus the following corollary.

	\begin{cor}\label{coro:ExistenceMDMwithSameGradient}
		Let $f:K\rightarrow\R^\maxdim$ be \mdm. For all integer $\maxdim'\geq 1$, there exists a \mdm function $g:K\rightarrow\R^{\maxdim'}$ which has the same gradient vector field, hence the same critical points, as $f$.
	\end{cor}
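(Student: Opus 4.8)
The plan is to use Proposition~\ref{prop:AcyclicityGradient} twice, exploiting the fact that it places no restriction on the dimension of the codomain (as emphasized in the paragraph preceding the corollary). The corollary is essentially a funnel: pass from the given function $f$ down to the purely combinatorial datum of its gradient field, and then build a new function back up in the desired target dimension $\maxdim'$.

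First I would let $\cV$ denote the gradient vector field of $f$. Since $f:K\rightarrow\R^\maxdim$ is \mdm and $\cV$ is its gradient, the forward direction of Proposition~\ref{prop:AcyclicityGradient} yields that $\cV$ is acyclic. Next I would apply the reverse direction of the same proposition, this time with the target dimension $\maxdim'$ in place of $\maxdim$: because $\cV$ is acyclic, there exists a \mdm function $g:K\rightarrow\R^{\maxdim'}$ whose gradient vector field is exactly $\cV$.

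It then remains only to observe that $f$ and $g$ have the same critical points, which is immediate: both functions share the single field $\cV$, and the critical points of any \mdm function are precisely the fixed points of its gradient (as recorded just after the definition of the gradient vector field). Hence $\Fix\cV$ is simultaneously the critical set of $f$ and of $g$, completing the argument.

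The step that might appear to require care—but in fact does not—is the change of dimension: one could worry that re-encoding the same field in a different number of components might introduce new incomparabilities and violate conditions \ref{def:MDMenum3}--\ref{def:MDMenum4}. This worry is entirely absorbed by Proposition~\ref{prop:AcyclicityGradient}, whose proof realizes $\cV$ by a single real-valued discrete Morse function $h$ and then takes $g=(h,\dots,h)$ via Lemma~\ref{lem:ComponentIdenticalVectorFieldsMDM}; in that construction every facet--cofacet pair is automatically comparable in $\preceq$. Thus there is no genuine obstacle here, and the corollary follows directly from the two applications above.
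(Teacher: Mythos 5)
Your argument is correct and is exactly the paper's: the corollary is derived by applying Proposition~\ref{prop:AcyclicityGradient} in both directions (the gradient of the given \mdm function is acyclic, and any acyclic field is the gradient of some \mdm function into $\R^{\maxdim'}$), with the equality of critical sets following because both equal $\Fix\cV$. Nothing further is needed.
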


	In particular, we see from this result that for any \mdm function, there exists a real-valued discrete Morse function having the same gradient vector field.

	\section{Flow of a multidimensional discrete Morse function}\label{sec:flow-MDM}

	Here, we outline some results on the flow associated to the gradient field of a \mdm function. We first present some direct consequences of the acyclicity of a gradient field, including the Morse inequalities which are central in other variants of Morse theory. Then, we show some interesting properties of the image of a solution for such a flow.

	\subsection{Gradient flow and finest Morse decomposition}

	Let $\cV$ be the gradient field of a \mdm function $f:K\rightarrow\R^\maxdim$. We note $\Pi_f:=\Pi_\cV$ the \emph{gradient flow} of $f$. Similarly, we write $\sigma\rightconnects{f}\tau$ rather than $\sigma\rightconnects{\cV}\tau$ when there is a solution in $\Pi_f$ going from $\sigma$ to $\tau$. From the definition of a flow, we find directly the following characterization of $\Pi_f$.

	\begin{prop}\label{prop:GradientFlow}
		Let $f:K\rightarrow\R^\maxdim$ be \mdm. The gradient flow of $f$ is
		\begin{align*}
			\Pi_f(\sigma) = \begin{cases}
				\Cl\sigma & \text{ if }\sigma \text{ is critical for }f, \\
				\Ex\sigma\backslash\{\alpha\} & \text{ if } T_f(\sigma) = \{\alpha\} \text{ for some facet } \alpha\subset\sigma, \\
				\{\beta\} & \text{ if } H_f(\sigma) = \{\beta\}\text{ for some cofacet } \beta\supset\sigma.
			\end{cases}
		\end{align*}
	\end{prop}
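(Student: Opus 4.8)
The plan is to unfold Definition~\ref{def:flow} applied to the gradient field $\cV$ of $f$ and to translate each of its three cases into the language of $H_f$ and $T_f$, using the correspondence recorded immediately after the definition of the gradient vector field. Recall from there that $\Fix\cV$ consists exactly of the critical points of $f$, that $\image\cV\backslash\Fix\cV = \{\sigma\in K\mid\card T_f(\sigma)=1\}$, and that $\dom\cV\backslash\Fix\cV = \{\sigma\in K\mid\card H_f(\sigma)=1\}$. Since every simplex satisfies exactly one of the three mutually exclusive conditions ``$\sigma$ critical'', ``$\card T_f(\sigma)=1$'', ``$\card H_f(\sigma)=1$'', these three classes match precisely the three cases $\sigma\in\Fix\cV$, $\sigma\in\image\cV\backslash\Fix\cV$, $\sigma\in\dom\cV\backslash\Fix\cV$ of Definition~\ref{def:flow}. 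As $\Pi_f=\Pi_\cV$, it suffices to check the three branches one at a time.

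The two extreme cases are immediate substitutions. If $\sigma$ is critical, then $\sigma\in\Fix\cV$, and the flow gives $\Pi_\cV(\sigma)=\Cl\sigma$. If $H_f(\sigma)=\{\beta\}$ for a cofacet $\beta\supset\sigma$, then $\sigma\in\dom\cV\backslash\Fix\cV$, the definition of the gradient gives $\cV(\sigma)=\beta$, and hence $\Pi_\cV(\sigma)=\{\cV(\sigma)\}=\{\beta\}$. The only branch requiring a short verification is $T_f(\sigma)=\{\alpha\}$ for a facet $\alpha\subset\sigma$: here $\sigma\in\image\cV\backslash\Fix\cV$, so the flow returns $\Ex\sigma\backslash\{\cV^{-1}(\sigma)\}$, and I must identify $\cV^{-1}(\sigma)=\alpha$. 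For this I would argue that the unique $\gamma$ with $\cV(\gamma)=\sigma$ satisfies $H_f(\gamma)=\{\sigma\}$ by definition of the gradient (since $\cV(\gamma)=\sigma\neq\gamma$), whence $\gamma\subset\sigma$ with $f(\sigma)\preceq f(\gamma)$; this places $\gamma$ in $T_f(\sigma)=\{\alpha\}$, forcing $\gamma=\alpha$. Substituting $\cV^{-1}(\sigma)=\alpha$ then yields $\Pi_\cV(\sigma)=\Ex\sigma\backslash\{\alpha\}$, which completes the argument.

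Because each step is a direct substitution, I do not anticipate any genuine obstacle. The most delicate point is merely the bookkeeping identity $\cV^{-1}(\sigma)=\alpha$, and even that reduces to the symmetry between the defining conditions of $H_f$ and $T_f$ together with the injectivity built into $\cV$. The one thing to be careful about is to invoke the partition of $K$ into critical, $H_f$-regular, and $T_f$-regular simplices so that the three cases are exhaustive and pairwise disjoint, matching the piecewise definition of $\Pi_\cV$ exactly.
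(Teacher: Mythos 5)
Your argument is correct and is exactly the route the paper intends: the paper gives no written proof, stating only that the characterization follows directly from the definition of a flow, which is precisely your unfolding of Definition~\ref{def:flow} via the correspondence between $\Fix\cV$, $\image\cV\backslash\Fix\cV$, $\dom\cV\backslash\Fix\cV$ and the critical/$\card T_f=1$/$\card H_f=1$ trichotomy. Your careful check that $\cV^{-1}(\sigma)=\alpha$ in the middle branch is the one nontrivial detail, and it is handled correctly.
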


	For a gradient flow, we see that $\Pi_f(\sigma) = H_f(\sigma)$ when $\card H_f(\sigma)=1$ and $\Pi_f(\sigma)\subseteq\Cl\sigma$ otherwise. Moreover, we know from Proposition \ref{prop:AcyclicityGradient} that the gradient of a \mdm function is always acyclic and from Proposition \ref{prop:AcyclictyFieldEquiFlow} that a discrete vector field is acyclic if and only if its associated flow also is. The next result follows.

	\begin{prop}\label{prop:AcyclicityGradientFlow}
		The gradient flow $\Pi_f$ of a \mdm function $f:K\rightarrow\R^\maxdim$ is always acyclic.
	\end{prop}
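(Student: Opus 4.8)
The plan is to obtain this as an immediate formal consequence of the two preceding propositions, since essentially all the substantive work has already been carried out. Given a \mdm function $f:K\rightarrow\R^\maxdim$ with gradient vector field $\cV$, the gradient flow $\Pi_f$ is by definition nothing other than $\Pi_\cV$, so the only thing to verify is that this flow is acyclic in the sense defined earlier.

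First I would invoke Proposition \ref{prop:AcyclicityGradient}: because $\cV$ is the gradient of the \mdm function $f$, that proposition guarantees $\cV$ is an acyclic discrete vector field. The genuine content behind this is already in the second half of its proof, where one observes that along any nontrivial $\cV$-path $\alpha_0^{(p)},\beta_0^{(p+1)},\alpha_1^{(p)},\dots,\alpha_r^{(p)}$ the values satisfy $f(\alpha_0)\succeq f(\beta_0)\succneqq f(\alpha_1)\succeq\cdots\succneqq f(\alpha_r)$, forcing a strict decrease in the order $\preceq$ and hence $\alpha_0\neq\alpha_r$, so no nontrivial closed $\cV$-path exists.

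Then I would apply Proposition \ref{prop:AcyclictyFieldEquiFlow}, which asserts that a discrete vector field is acyclic if and only if its associated flow is acyclic. Feeding our acyclic $\cV$ into the appropriate direction of this equivalence yields that $\Pi_\cV=\Pi_f$ is acyclic, which is exactly the claim. There is no real obstacle here: the result is a corollary of equivalences already established, and the only point requiring care is the bookkeeping step of noting explicitly that $\Pi_f$ is literally $\Pi_\cV$ for $\cV$ the gradient of $f$, so that Proposition \ref{prop:AcyclictyFieldEquiFlow} applies verbatim. All the real work—the strict monotonicity of $f$ along paths and the translation between $\cV$-paths and flow solutions—lives in those earlier proofs.
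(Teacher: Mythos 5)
Your argument is exactly the paper's: the result is stated there as an immediate consequence of Proposition \ref{prop:AcyclicityGradient} (the gradient of a \mdm function is acyclic) combined with Proposition \ref{prop:AcyclictyFieldEquiFlow} (a discrete vector field is acyclic iff its associated flow is), which is precisely the two-step deduction you give. Correct, and no gap.
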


	Also, the following proposition is a direct consequence of Theorem \ref{theo:BasicMinimalMorseDecomposition} and Proposition \ref{prop:BasicSetsAcyclicField}.

	\begin{prop}\label{prop:MinimalMorseDecompositionMDM}
		Let $f:K\rightarrow\R^\maxdim$ be \mdm. The collection of basic sets of $\Pi_f$ is
		\begin{gather*}
			\cM = \big\lbrace\,\{\sigma\}\subseteq K\ |\ \sigma\text{ is critical for } f \big\rbrace.
		\end{gather*}
		Therefore, $\cM$ is the finest Morse decomposition of $\Pi_f$.
	\end{prop}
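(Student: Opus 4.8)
The plan is to obtain this as a short consequence of the acyclicity of the gradient flow together with the two cited results, essentially by unwinding definitions. Throughout, let $\cV$ denote the gradient vector field of $f$, so that $\Pi_f = \Pi_\cV$ by definition.

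First I would invoke Proposition \ref{prop:AcyclicityGradientFlow} to record that $\Pi_f$ is an acyclic flow. This is the only hypothesis needed to feed into the general structural result for the basic sets of an acyclic flow.

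Next I would apply Proposition \ref{prop:BasicSetsAcyclicField} to the acyclic flow $\Pi_f = \Pi_\cV$, which immediately yields that its collection of basic sets is
\begin{gather*}
	\cB = \big\lbrace\,\{\sigma\}\subseteq K\ |\ \sigma\in\Fix\cV\big\rbrace.
\end{gather*}
To rewrite this in the form stated in the proposition, I would use the identification---recorded just after the definition of the gradient vector field---that the fixed points of $\cV$ are exactly the critical points of $f$. Substituting $\Fix\cV = \{\sigma\in K\ |\ \sigma\text{ is critical for } f\}$ into the description of $\cB$ gives $\cB = \cM$, as claimed.

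Finally, the assertion that $\cM$ is the finest Morse decomposition of $\Pi_f$ follows directly from the second part of Theorem \ref{theo:BasicMinimalMorseDecomposition}, which states that the collection of basic sets of any flow is itself a Morse decomposition and is the finest one. Since no genuine computation is involved, I do not anticipate any real obstacle here; the only point requiring care is confirming that the fixed-point/critical-point identification genuinely applies to $\cV$, which is guaranteed by the construction of the gradient field.
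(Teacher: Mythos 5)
Your proposal is correct and follows exactly the route the paper takes: the paper presents this proposition as a direct consequence of Proposition \ref{prop:AcyclicityGradientFlow}, Proposition \ref{prop:BasicSetsAcyclicField}, and Theorem \ref{theo:BasicMinimalMorseDecomposition}, using the identification of $\Fix\cV$ with the critical points of $f$. No gaps.
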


	For any critical point $\sigma^{(p)}\in K$ of a \mdm function $f:K\rightarrow\R^\maxdim$, we can verify that the Poincar{\'e} polynomial of $\{\sigma\}$ is simply $P_{\{\sigma\}}(t) = t^p$. Hence, using Proposition \ref{prop:MorseEquationMorseDecomposition} and Corollary \ref{coro:MorseInequalitiesMorseDecomposition}, we see that this Morse decomposition leads to the following Morse equation and inequalities.

	\begin{prop}\label{prop:MorseEquationMDMfunction}
		Let $f:K\rightarrow\R^\maxdim$ be \mdm with $\dim K = n$. Let $m_p$ be the number of critical points of index $p$ of $f$. We have the following Morse equation:
		\begin{gather*}
			\sum_{p=0}^nm_pt^p = P_K(t) + (1+t)Q(t)
		\end{gather*}
		for some polynomial $Q(t)$ with non-negative coefficients.
	\end{prop}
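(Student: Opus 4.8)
The plan is to feed the finest Morse decomposition of the gradient flow, already identified in Proposition \ref{prop:MinimalMorseDecompositionMDM}, into the general Morse equation of Proposition \ref{prop:MorseEquationMorseDecomposition}, and then to evaluate the Conley polynomials of its one-element pieces. By Proposition \ref{prop:MinimalMorseDecompositionMDM}, the collection $\cM = \{\{\sigma\}\mid\sigma\text{ is critical for }f\}$ is a Morse decomposition of $\Pi_f$, so Proposition \ref{prop:MorseEquationMorseDecomposition} gives immediately
\begin{gather*}
	\sum_{\sigma\text{ critical}}P_{\{\sigma\}}(t) = P_K(t) + (1+t)Q(t)
\end{gather*}
for some polynomial $Q$ with non-negative coefficients. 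Everything then reduces to computing each $P_{\{\sigma\}}(t)$ and regrouping the left-hand sum according to the index of $\sigma$.

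The main step, and the only place where something genuinely needs to be checked, is the claim that $P_{\{\sigma\}}(t) = t^p$ for every critical simplex $\sigma^{(p)}$. Since $\sigma$ is critical it is a fixed point of the gradient field $\cV$, and as a member of the Morse decomposition $\cM$ the singleton $\{\sigma\}$ is an isolated invariant set, so its Conley index is defined. By definition $\Con(\{\sigma\})$ is the singular homology of $\Cl\sigma$ relative to $\Ex\sigma$. Regarding $K$ as a CW-complex, the space underlying $\Cl\sigma$ is the closed $p$-simplex, homeomorphic to a ball $D^p$, while $\Ex\sigma = \Cl\sigma\setminus\{\sigma\}$ is the union of its proper faces, homeomorphic to the boundary sphere $\partial D^p\cong S^{p-1}$. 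Hence
\begin{gather*}
	\Con(\{\sigma\}) = H_*(\Cl\sigma,\Ex\sigma)\cong H_*(D^p,S^{p-1}),
\end{gather*}
which has rank $1$ in degree $p$ and rank $0$ in every other degree. Thus $\beta_p(\{\sigma\}) = 1$, $\beta_q(\{\sigma\}) = 0$ for $q\neq p$, and therefore $P_{\{\sigma\}}(t) = t^p$.

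To finish, I would substitute this evaluation back into the Morse equation. Grouping the critical simplices by index and recalling that $m_p$ counts the critical points of index $p$, the left-hand side becomes $\sum_{\sigma\text{ critical}}t^{\dim\sigma} = \sum_{p=0}^n m_p t^p$, where the range stops at $n = \dim K$ since no simplex has larger dimension. This yields the asserted Morse equation. I expect the only real content to be the local Conley-index computation of the second paragraph; the reduction to it and the final bookkeeping are routine once Propositions \ref{prop:MinimalMorseDecompositionMDM} and \ref{prop:MorseEquationMorseDecomposition} are in hand.
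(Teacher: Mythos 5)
Your proposal is correct and follows essentially the same route as the paper: the paper likewise feeds the finest Morse decomposition of Proposition \ref{prop:MinimalMorseDecompositionMDM} into Proposition \ref{prop:MorseEquationMorseDecomposition}, after noting that $P_{\{\sigma\}}(t)=t^p$ for a critical simplex of index $p$. Your explicit computation of the Conley index as $H_*(\Cl\sigma,\Ex\sigma)\cong H_*(D^p,S^{p-1})$ simply spells out the detail the paper leaves as ``we can verify.''
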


	\begin{cor}\label{coro:MorseInequalitiesMDMfunction}
		Let $f:K\rightarrow\R^\maxdim$ be \mdm with $\dim K = n$. Let $m_p$ be the number of critical points of index $p$ of $f$. For all $p=0,1,...,n$, we have
		\begin{enumerate}
			\item the strong Morse inequalities:
			\begin{gather*}
				m_p - m_{p-1} + \cdots +(-1)^p m_0 \geq \beta_p(K) - \beta_{p-1}(K) + \cdots +(-1)^p \beta_0(K);
			\end{gather*}
			\item the weak Morse inequalities:
			\begin{gather*}
				m_p \geq \beta_p(K);
			\end{gather*}
			\item the following equality linking the Euler characteristic $\chi(K)$ of $K$ and the critical points of $f$:
			\begin{gather*}
				m_0 - m_1 + \cdots +(-1)^n m_n = \beta_0(K) - \beta_1(K) + \cdots +(-1)^n \beta_n(K) = \chi(K)
			\end{gather*}
		\end{enumerate}
	\end{cor}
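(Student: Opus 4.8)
The plan is to deduce all three statements directly from the Morse equation of Proposition~\ref{prop:MorseEquationMDMfunction}, in exactly the same way Corollary~\ref{coro:MorseInequalitiesMorseDecomposition} was deduced from Proposition~\ref{prop:MorseEquationMorseDecomposition}. The two Morse equations have the same shape, with the left-hand side $\sum_{p=0}^n m_p t^p$ here playing the role of $\sum_{r\in\P} P_{M_r}(t)$ there, and this is no coincidence. By Proposition~\ref{prop:MinimalMorseDecompositionMDM}, the finest Morse decomposition of $\Pi_f$ is $\cM = \big\{\{\sigma\}\ |\ \sigma\text{ critical for }f\big\}$, and for a critical point $\sigma^{(p)}$ one has $P_{\{\sigma\}}(t) = t^p$, i.e.\ $\beta_q(\{\sigma\}) = 1$ if $q=p$ and $0$ otherwise. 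Summing over all critical simplices, the quantity $\sum_{r}\beta_p(M_r)$ appearing in Corollary~\ref{coro:MorseInequalitiesMorseDecomposition} is precisely the number $m_p$ of critical points of index $p$. Hence the present corollary is literally Corollary~\ref{coro:MorseInequalitiesMorseDecomposition} applied to this particular Morse decomposition, and its three conclusions transcribe verbatim.

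For completeness I would also record the self-contained derivation from the Morse equation. First I would establish the strong inequalities: multiplying both sides of the identity in Proposition~\ref{prop:MorseEquationMDMfunction} by the formal power series $(1+t)^{-1} = 1 - t + t^2 - \cdots$, the right-hand side becomes $P_K(t)(1+t)^{-1} + Q(t)$, and comparing the coefficient of $t^p$ (for $p\leq n$, recalling $P_K(t)=\sum_{j=0}^n\beta_j(K)t^j$) yields
\begin{gather*}
	\sum_{j=0}^p (-1)^{p-j} m_j = \sum_{j=0}^p (-1)^{p-j}\beta_j(K) + q_p,
\end{gather*}
where $q_p\geq 0$ is the $p$-th coefficient of $Q(t)$; non-negativity of $q_p$ gives the strong inequality. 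The weak inequalities then follow by adding the strong inequalities at levels $p$ and $p-1$, since the two alternating sums telescope so that their sum is $m_p$ on the left and $\beta_p(K)$ on the right (with the convention that the empty sum at level $-1$ is $0$, which handles $p=0$ directly). Finally, substituting $t=-1$ annihilates the $(1+t)Q(t)$ term and leaves the Euler characteristic identity.

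There is essentially no obstacle here: every ingredient---the Morse equation, the non-negativity of $Q(t)$, and the Conley polynomials of the singletons---is already in hand, so the argument reduces to standard coefficient extraction from a polynomial identity. The only point requiring a moment's care is the bookkeeping that identifies $m_p$, defined here combinatorially as a count of critical simplices, with the homological sum $\sum_{r}\beta_p(M_r)$ of the finest Morse decomposition; once that identification is made, the result is immediate.
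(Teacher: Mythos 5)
Your proposal matches the paper's own argument: the paper likewise obtains this corollary by applying Corollary~\ref{coro:MorseInequalitiesMorseDecomposition} (equivalently, coefficient extraction from the Morse equation of Proposition~\ref{prop:MorseEquationMDMfunction}) to the finest Morse decomposition of Proposition~\ref{prop:MinimalMorseDecompositionMDM}, using $P_{\{\sigma^{(p)}\}}(t)=t^p$ to identify $m_p$ with $\sum_r\beta_p(M_r)$. The additional self-contained derivation you record is exactly the computation the paper carries out in proving Corollary~\ref{coro:MorseInequalitiesMorseDecomposition}, so there is nothing to correct.
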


	These Morse inequalities are completely analogous to those from the classical and discrete Morse theories. In section \ref{sec:CriticalComponents}, we will see that there also exists other inequalities which are specific to the multiparameter extension.

	\subsection{Properties of the image of a solution}
	As mentioned in the proof of Proposition \ref{prop:AcyclicityGradient}, given a \mdm function $f:K\rightarrow\R^\maxdim$ with gradient $\cV$, it is clear that for all $\cV$-path $\alpha_0^{(p)},\beta_0^{(p+1)},\alpha_1^{(p)},...,\beta_{r-1}^{(p+1)},\alpha_r^{(p)}$, we have
	\begin{align*}
		f(\alpha_0)\succeq f(\beta_0) \succneqq f(\alpha_1) \succeq f(\beta_1) \succneqq \cdots \succeq f(\beta_{r-1})\succneqq f(\alpha_r).
	\end{align*}
	We could think a similar property exists for solutions of $\Pi_f$. For instance, we could expect that $f(\sigma)\succeq f(\tau)$ for all $\sigma\in K$ and all $\tau\in\Pi_f(\sigma)$. However, it is not always the case, not even in the one-dimensional setting, as shown in Figure \ref{fig:NonDescendingSolution}.

	\begin{figure}[h]
		\centering
		\begin{tikzpicture}[scale=3]
			\draw (0,0.866) -- (1,0.866) -- (0.5,0) -- cycle;
			\draw[red] (0.5,0)node{$\bullet$};
			\draw[ultra thick,-latex] (0,0.866) -- (0.25,0.433);
			\draw[ultra thick,-latex] (0.5,0.866) -- (0.5,0.433);
			\draw[ultra thick,-latex] (1,0.866) -- (0.75,0.433);
			\draw (0.5,0)node[below]{$0$}
			(0.25,0.433)node[below left]{$1$}
			(0.5,0.433)node[below]{$3$}
			(0.75,0.433)node[below right]{$2$}
			(0,0.866)node[left]{$4$}
			(0.5,0.866)node[above]{$6$}
			(1,0.866)node[right]{$5$};
		\end{tikzpicture}
		\caption{Discrete Morse function $f$ such that, for some simplices $\sigma$ and $\tau$, we have $f(\sigma) < f(\tau)$ and $\tau\in\Pi_f(\sigma)$. The red dot represents the critical point of $f$}\label{fig:NonDescendingSolution}
	\end{figure}
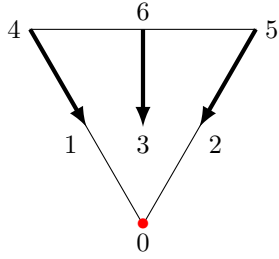

	Indeed, let $f:K\rightarrow\R$ be the discrete Morse function defined in Figure \ref{fig:NonDescendingSolution} and note $\sigma,\tau_1,\tau_2\in K$ the simplices for which $f(\sigma) = 3$, $f(\tau_1) = 4$ and $f(\tau_2) = 5$. We see that for $i=1,2$, we have $\tau_i\in\Pi_f(\sigma)$ but $f(\sigma) < f(\tau_i)$.

	Nonetheless, we can prove that, in most cases, $\sigma\rightconnects{f}\tau$ implies $f(\sigma)\succeq f(\tau)$.

	\begin{lem}\label{lem:DescendingImagePi}
		Let $f:K\rightarrow\R^\maxdim$ be \mdm and consider two simplices $\sigma,\tau\in K$ such that $\tau\in \Cl\sigma\backslash\Cl T_f(\sigma)$. Necessarily, we have $f(\sigma)\succeq f(\tau)$, and the equality is only verified when $\sigma=\tau$.
	\end{lem}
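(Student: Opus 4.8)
The plan is to induct on the codimension $c=\dim\sigma-\dim\tau$. When $c=0$ we have $\tau=\sigma$ and there is nothing to prove, so suppose $c\geq 1$. The idea is to descend from $\sigma$ down to $\tau$ one facet at a time, extracting a strict drop of $f$ at each step from condition \ref{def:MDMenum4} of Definition \ref{def:MDM}. The subtlety is that this condition only yields a drop $f(\alpha)\precneqq f(\gamma)$ for a facet $\alpha$ of $\gamma$ that lies \emph{outside} $T_f(\gamma)$, so the entire difficulty is to descend while avoiding $T_f$ at every level.

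First I would reduce the inductive step to producing a single suitable facet, namely a facet $\alpha^*$ of $\sigma$ with $\tau\subseteq\alpha^*$ and $\tau\notin\Cl T_f(\alpha^*)$. Such an $\alpha^*$ closes the step: since $\tau\notin\Cl T_f(\sigma)$ while $\tau\subseteq\alpha^*$, the facet $\alpha^*$ cannot coincide with the element of $T_f(\sigma)$ (if any), so $\alpha^*\notin T_f(\sigma)$ and condition \ref{def:MDMenum4} gives $f(\alpha^*)\precneqq f(\sigma)$. At the same time $\tau\in\Cl\alpha^*\setminus\Cl T_f(\alpha^*)$ with $\dim\alpha^*-\dim\tau=c-1$, so the inductive hypothesis applies to the pair $(\alpha^*,\tau)$ and yields $f(\alpha^*)\succeq f(\tau)$. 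Chaining the two inequalities gives $f(\sigma)\succneqq f(\tau)$, a strict relation; in particular $f(\sigma)=f(\tau)$ can occur only in the base case $\sigma=\tau$, which settles the equality clause.

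The crux, and the step I expect to be the main obstacle, is the existence of $\alpha^*$, which I would establish by contradiction. Write $w_1,\dots,w_c$ for the vertices of $\sigma$ not in $\tau$, so that the facets of $\sigma$ containing $\tau$ are exactly $\alpha_j:=\sigma\setminus\{w_j\}$ for $j=1,\dots,c$. Suppose $\tau\in\Cl T_f(\alpha_j)$ for every $j$. Then each $T_f(\alpha_j)=\{\beta_j\}$ with $\tau\subseteq\beta_j$, which forces $\beta_j=\sigma\setminus\{w_j,w_{g(j)}\}$ for some index $g(j)\neq j$. Since $\beta_j\in T_f(\alpha_j)$ is equivalent to $\alpha_j\in H_f(\beta_j)$ and $\card H_f(\beta_j)\leq 1$, we get $H_f(\beta_j)=\{\alpha_j\}$; hence the cofacet $\alpha_{g(j)}=\beta_j\cup\{w_j\}$ of $\beta_j$ is not in $H_f(\beta_j)$, and condition \ref{def:MDMenum3} forces $f(\alpha_{g(j)})\succneqq f(\beta_j)\succeq f(\alpha_j)$, that is $f(\alpha_j)\precneqq f(\alpha_{g(j)})$ for every $j$. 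Iterating the self-map $g$ on the finite set $\{1,\dots,c\}$ then produces a cycle along which $f$ strictly increases yet returns to its starting value, which is impossible. Therefore some $\alpha_j$ must satisfy $\tau\notin\Cl T_f(\alpha_j)$, and this is precisely the facet $\alpha^*$ needed to complete the induction.
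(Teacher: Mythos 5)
Your proof is correct, but it runs the induction in the opposite direction from the paper's, and the two routes are not equivalent in difficulty. The paper also inducts on the codimension $q=\dim\sigma-\dim\tau$, but it goes \emph{bottom-up}: for $q>1$ it picks two distinct cofacets $\beta_1,\beta_2$ of $\tau$ lying inside $\sigma$, notes that at least one of them, say $\beta$, avoids $H_f(\tau)$ because $\card H_f(\tau)\leq 1$, gets $f(\beta)\succneqq f(\tau)$ from condition \ref{def:MDMenum3}, checks $\beta\in\Cl\sigma\backslash\Cl T_f(\sigma)$, and applies the induction hypothesis to the pair $(\sigma,\beta)$. There the intermediate simplex exists essentially for free, since the single possible ``bad'' cofacet of $\tau$ is controlled by condition \ref{def:MDMenum1} alone. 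Your \emph{top-down} reduction to a facet $\alpha^*$ of $\sigma$ has to control $\Cl T_f(\alpha_j)$ at the next level down for each candidate facet $\alpha_j$, and the bound $\card T_f(\alpha_j)\leq 1$ for each $j$ separately does not rule out that every $\alpha_j$ is obstructed; hence your extra argument via the self-map $g$ and the strictly increasing cycle $f(\alpha_j)\precneqq f(\alpha_{g(j)})$, which is a genuine additional piece of work (in effect a local acyclicity statement extracted from conditions \ref{def:MDMenum1}--\ref{def:MDMenum4}). I checked that argument and it is sound: $\tau\in\Cl T_f(\alpha_j)$ does force $T_f(\alpha_j)=\{\beta_j\}$ with $\beta_j=\sigma\setminus\{w_j,w_{g(j)}\}$, the equivalence $\beta_j\in T_f(\alpha_j)\Leftrightarrow\alpha_j\in H_f(\beta_j)$ pins down $H_f(\beta_j)=\{\alpha_j\}$, condition \ref{def:MDMenum3} then gives the strict increase along $g$, and a fixed-point-free self-map of a finite set must have a cycle of length at least two, yielding the contradiction. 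So both proofs are valid; the paper's is shorter because ascending from $\tau$ exploits the cardinality bound where it is cheapest, while yours buys a self-contained existence lemma for a ``good'' facet of $\sigma$ at the cost of the cycle argument.
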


	\begin{proof}
		Let $\tau\in\Cl\sigma\backslash \Cl T_f(\sigma)$ and write $\dim\tau=p$ and $\dim\sigma=p+q$. The result is shown by induction on $q = \dim\sigma-\dim\tau$. For $q=0$, we obviously have $\sigma=\tau$, so $f(\sigma)=f(\tau)$. For $q=1$, $\tau$ is then a facet of $\sigma$ such that $\tau\notin T_f(\sigma)$, so $f(\sigma)\succneqq f(\tau)$ by definition of a \mdm function.

		When $q>1$, we could show there are two different simplices $\beta_1,\beta_2\in K_{p+1}$ such that $\tau\subset\beta_1\subset\sigma$ and $\tau\subset\beta_2\subset\sigma$. By definition of a \mdm function, at least one $\beta\in\{\beta_1,\beta_2\}$ is such that $\beta\notin H_f(\tau)$, so $f(\beta)\succneqq f(\tau)$. Moreover, since $\tau\subset\beta$ and $\tau\notin\Cl T_f(\sigma)$, we see that $\beta\notin\Cl T_f(\sigma)$, meaning that $\beta\in\Cl\sigma\backslash\Cl T_f(\sigma)$. By the induction hypothesis, we then have $f(\sigma)\succneqq f(\beta)$, so
		\begin{gather*}
			f(\sigma) \succneqq f(\beta) \succneqq f(\tau). \qedhere
		\end{gather*}
	\end{proof}

	\begin{lem}\label{lem:DescendingConditions}
		Let $f:K\rightarrow\R^\maxdim$ be \mdm and consider $\sigma,\tau\in K$ such that $\tau\in \Pi_f(\sigma)$. Suppose at least one of the following conditions is satisfied:
		\begin{enumerate}[label=(\alph*)]
			\item\label{lem:DescendingConditionsEnum1} $\card T_f(\sigma)=0$;
			\item\label{lem:DescendingConditionsEnum2} $\tau\notin\Ex T_f(\sigma)$;
			\item\label{lem:DescendingConditionsEnum3} for every $\beta^{(p+1)}\supset\tau^{(p)}$ such that $\beta\in\Cl\sigma\backslash\Cl T_f(\sigma)$, we have $\beta\notin H_f(\tau)$.
		\end{enumerate}
		We then have $f(\sigma)\succeq f(\tau)$, where the equality $f(\sigma)=f(\tau)$ can only hold if either $\sigma=\tau$ or $H_f(\sigma)=\{\tau\}$.
	\end{lem}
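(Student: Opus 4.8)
The plan is to reduce everything to Lemma~\ref{lem:DescendingImagePi} by splitting on the value of $\card T_f(\sigma)$, which by Proposition~\ref{prop:GradientFlow} dictates the shape of $\Pi_f(\sigma)$. Recall that $\Cl T_f(\sigma)=\emptyset$ when $\card T_f(\sigma)=0$ and $\Cl T_f(\sigma)=\Cl\alpha$ when $T_f(\sigma)=\{\alpha\}$, while $\Ex\sigma=\Cl\sigma\backslash\{\sigma\}$. The recurring move is that whenever a simplex $\gamma$ lies in $\Cl\sigma\backslash\Cl T_f(\sigma)$, Lemma~\ref{lem:DescendingImagePi} immediately yields $f(\sigma)\succeq f(\gamma)$, with equality only if $\gamma=\sigma$.

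First I would dispose of the case $\card T_f(\sigma)=0$, which is exactly when condition~\ref{lem:DescendingConditionsEnum1} holds. If $\sigma$ is critical then $\Pi_f(\sigma)=\Cl\sigma$ and $\Cl T_f(\sigma)=\emptyset$, so $\tau\in\Cl\sigma\backslash\Cl T_f(\sigma)$ and Lemma~\ref{lem:DescendingImagePi} gives $f(\sigma)\succeq f(\tau)$ with equality iff $\sigma=\tau$. If instead $\card H_f(\sigma)=1$, say $H_f(\sigma)=\{\beta\}$, then $\Pi_f(\sigma)=\{\beta\}$ forces $\tau=\beta$, and the definition of $H_f$ gives $f(\tau)=f(\beta)\preceq f(\sigma)$ directly, the equality recording precisely the case $H_f(\sigma)=\{\tau\}$. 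Both subcases match the stated equality clause.

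The substantive case is $\card T_f(\sigma)=1$, say $T_f(\sigma)=\{\alpha\}$, where $\Pi_f(\sigma)=\Ex\sigma\backslash\{\alpha\}$, so $\tau$ is a proper face of $\sigma$ with $\tau\neq\alpha$; here condition~\ref{lem:DescendingConditionsEnum1} fails, so I may invoke \ref{lem:DescendingConditionsEnum2} or~\ref{lem:DescendingConditionsEnum3}. If $\tau\notin\Cl T_f(\sigma)=\Cl\alpha$ — which is forced under~\ref{lem:DescendingConditionsEnum2}, since $\tau\notin\Ex\alpha$ together with $\tau\neq\alpha$ gives $\tau\notin\Cl\alpha$ — then $\tau\in\Cl\sigma\backslash\Cl T_f(\sigma)$ and Lemma~\ref{lem:DescendingImagePi} yields $f(\sigma)\succneqq f(\tau)$ (strict, as $\tau\neq\sigma$). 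The remaining possibility is $\tau\subsetneq\alpha$ under condition~\ref{lem:DescendingConditionsEnum3}. Here I would interpose a cofacet of $\tau$: since $\alpha$ is a facet of $\sigma$ there is a unique vertex $v\in\sigma\backslash\alpha$, and $\beta:=\tau\cup\{v\}$ is a cofacet of $\tau$ contained in $\sigma$ but not in $\alpha$, hence $\beta\in\Cl\sigma\backslash\Cl T_f(\sigma)$. Condition~\ref{lem:DescendingConditionsEnum3} then gives $\beta\notin H_f(\tau)$, so $f(\beta)\succneqq f(\tau)$ by Definition~\ref{def:MDM}\ref{def:MDMenum3}, while Lemma~\ref{lem:DescendingImagePi} gives $f(\sigma)\succeq f(\beta)$; chaining produces $f(\sigma)\succeq f(\beta)\succneqq f(\tau)$, i.e.\ $f(\sigma)\succneqq f(\tau)$. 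In both subcases the inequality is strict, consistent with the equality clause being vacuous when $\card T_f(\sigma)=1$ (as then $H_f(\sigma)=\emptyset$ and $\sigma\neq\tau$).

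The hard part is this last subcase ($\tau\subsetneq\alpha$ under~\ref{lem:DescendingConditionsEnum3}): Lemma~\ref{lem:DescendingImagePi} cannot be applied to $\tau$ directly because $\tau\in\Cl T_f(\sigma)$, so the argument must route through a cofacet $\beta$ lying outside $\Cl T_f(\sigma)$. The two points needing care are the existence of such a $\beta$ — guaranteed by the single vertex $v\in\sigma\backslash\alpha$ and the inclusion $\tau\subseteq\alpha$, which ensures $v\notin\tau$ so that $\dim\beta=\dim\tau+1$ — and verifying that condition~\ref{lem:DescendingConditionsEnum3} applies to exactly this $\beta$. Once these are in place, the desired comparison follows by combining the \mdm inequality with Lemma~\ref{lem:DescendingImagePi}.
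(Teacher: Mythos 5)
Your proof is correct and follows essentially the same route as the paper: condition \ref{lem:DescendingConditionsEnum1} is handled by splitting on $\card H_f(\sigma)$, condition \ref{lem:DescendingConditionsEnum2} by showing $\tau\in\Cl\sigma\backslash\Cl T_f(\sigma)$ and invoking Lemma \ref{lem:DescendingImagePi}, and condition \ref{lem:DescendingConditionsEnum3} by interposing a cofacet $\beta\in\Cl\sigma\backslash\Cl T_f(\sigma)$ of $\tau$ and chaining $f(\sigma)\succeq f(\beta)\succneqq f(\tau)$. The only difference is that you make explicit the construction $\beta=\tau\cup\{v\}$ with $v\in\sigma\backslash\alpha$, which the paper merely asserts to exist; this is a welcome elaboration, not a different argument.
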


	\begin{proof}
		First, suppose $\card T_f(\sigma)=0$. In the case where $\card H_f(\sigma)=1$, we have $\tau\in\Pi_f(\sigma)=H_f(\sigma)$ by Proposition \ref{prop:GradientFlow}, hence $f(\sigma)\succeq f(\tau)$ by definition of $H_f(\sigma)$. In the case $\card H_f(\sigma)=0$ ($\sigma$ is then critical), we have $\tau\in\Pi_f(\sigma)=\Cl\sigma$ by Proposition \ref{prop:GradientFlow} and $\Cl\sigma=\Cl\sigma\backslash\Cl T_f(\sigma)$ because $T_f(\sigma)=\emptyset$. Thus, from Lemma \ref{lem:DescendingImagePi}, we see that $f(\sigma)\succeq f(\tau)$ and $f(\sigma)=f(\tau)\Rightarrow\sigma=\tau$. This proves \ref{lem:DescendingConditionsEnum1}.

		We now show that \ref{lem:DescendingConditionsEnum2} implies $f(\sigma)\succeq f(\tau)$ assuming \ref{lem:DescendingConditionsEnum1} is not true, meaning that $\card T_f(\sigma)=1$. From Proposition \ref{prop:GradientFlow}, we see that $\tau\in\Pi_f(\sigma)=\Ex\sigma\backslash T_f(\sigma)\subset\Cl\sigma\backslash T_f(\sigma)$. Since $\tau\notin\Ex T_f(\sigma)$ by hypothesis, it follows that $\tau\in\Cl\sigma\backslash\Cl T_f(\sigma)$. The result follows from Lemma \ref{lem:DescendingImagePi}.

		Finally, suppose that \ref{lem:DescendingConditionsEnum1} and \ref{lem:DescendingConditionsEnum2} are false and condition \ref{lem:DescendingConditionsEnum3} is satisfied. Let $\tau\in K_p$ and $\sigma\in K_{p+q}$. Since $\tau\in\Ex T_f(\sigma)$ by hypothesis, we can deduce that $q \geq 2$. From the definition of a simplicial complex, we could show that there exists a $\beta\in K_{p+1}$ such that $\tau\subset\beta\subset\sigma$ and $\beta\notin\Cl T_f(\sigma)$. In other words, $\beta\in\Cl\sigma\backslash\Cl T_f(\sigma)$. From Lemma \ref{lem:DescendingImagePi}, it follows that $f(\sigma)\succeq f(\beta)$. Moreover, by hypothesis \ref{lem:DescendingConditionsEnum3}, $\beta\notin H_f(\tau)$, so $f(\beta)\succneqq f(\tau)$ by definition of a \mdm function. Also, it follows from Lemma \ref{lem:DescendingImagePi} that $f(\sigma)\succeq f(\beta)$. Hence, $f(\sigma)\succeq f(\beta)\succneqq f(\tau)$.
	\end{proof}

	\begin{prop}\label{prop:AlmostDescendingPi}
		Let $f:K\rightarrow\R^\maxdim$ be \mdm and consider $\sigma,\tau\in K$ such that $\sigma\rightconnects{f}\tau$. Consider the three conditions from Lemma \ref{lem:DescendingConditions}:
		\begin{enumerate}[label=(\alph*)]
			\item\label{prop:AlmostDescendingPiEnum1} $\card T_f(\sigma)=0$;
			\item\label{prop:AlmostDescendingPiEnum2} $\tau\notin\Ex T_f(\sigma)$;
			\item\label{prop:AlmostDescendingPiEnum3} for every $\beta^{(p+1)}\supset\tau^{(p)}$ such that $\beta\in\Cl\sigma\backslash\Cl T_f(\sigma)$, we have $\beta\notin H_f(\tau)$.
		\end{enumerate}
		If at least one of these conditions is satisfied, then $f(\sigma)\succeq f(\tau)$, where the equality can only hold if either $\sigma=\tau$ or $H_f(\sigma)=\{\tau\}$.
	\end{prop}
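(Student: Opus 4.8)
The plan is to bootstrap from the single‑step Lemma~\ref{lem:DescendingConditions} to the multi‑step situation, handling the three hypotheses by rather different means and isolating the one genuinely new difficulty. Throughout I would use that $\Pi_f$ is acyclic (Proposition~\ref{prop:AcyclicityGradientFlow}), which in particular forbids $\sigma\rightconnects{f}\sigma$, so $\sigma\neq\tau$ always, and forbids $\sigma\rightconnects{f}\cV^{-1}(\sigma)$ whenever $\sigma\in\image\cV\setminus\Fix\cV$ (otherwise $\cV(\cV^{-1}(\sigma))=\sigma$ would close a cycle). I would first treat hypothesis~\ref{prop:AlmostDescendingPiEnum1} by proving the auxiliary statement (S): \emph{if $\card T_f(\sigma)=0$ and $\sigma\rightconnects{f}\tau$, then $f(\sigma)\succeq f(\tau)$, with equality only if $H_f(\sigma)=\{\tau\}$.} This I would prove by induction on the length $m$ of a realizing solution $\sigma=\varrho(0),\dots,\varrho(m)=\tau$, peeling off the \emph{last} step (so the start, and hence the hypothesis $\card T_f(\sigma)=0$, is never disturbed). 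Applying the induction hypothesis to $\sigma\rightconnects{f}\varrho(m-1)$ and then analysing the last step $\varrho(m-1)\to\tau$ by the three clauses of $\Pi_f$ in Proposition~\ref{prop:GradientFlow}, all cases chain immediately via Lemma~\ref{lem:DescendingImagePi} or the definition of $H_f$, \emph{except} when $\varrho(m-1)\in\image\cV\setminus\Fix\cV$ and $\tau\subsetneq\gamma':=\cV^{-1}(\varrho(m-1))$. There I would invoke Lemma~\ref{lem:IfConnectsCoFaceConnects}\ref{lem:IfConnectsCoFaceConnectsEnum2} to obtain $\sigma\rightconnects{f}\gamma'$; acyclicity rules out $\varrho(m-1)\rightconnects{f}\gamma'$, so the solution produced has length $\le m-1$, the induction hypothesis gives $f(\sigma)\succeq f(\gamma')$, and since $\card T_f(\gamma')=0$ and $\tau\in\Cl\gamma'$, Lemma~\ref{lem:DescendingImagePi} yields $f(\gamma')\succneqq f(\tau)$.

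With (S) in hand, hypothesis~\ref{prop:AlmostDescendingPiEnum1} is done, so I may assume $\card T_f(\sigma)=1$, say $T_f(\sigma)=\{\gamma\}$, and that we are given~\ref{prop:AlmostDescendingPiEnum2} or~\ref{prop:AlmostDescendingPiEnum3}. I would split on whether $\tau\in\Cl\sigma$. If $\tau\in\Cl\sigma$, the reasoning of Lemma~\ref{lem:DescendingConditions} transfers essentially verbatim, because the only facts it used about $\tau$ were $\tau\in\Cl\sigma$, $\tau\neq\sigma$ and $\tau\neq\gamma$, and the last two are now supplied by acyclicity. Concretely, under~\ref{prop:AlmostDescendingPiEnum2} the condition $\tau\notin\Ex T_f(\sigma)$ places $\tau$ in $\Cl\sigma\setminus\Cl T_f(\sigma)$ and Lemma~\ref{lem:DescendingImagePi} applies; under~\ref{prop:AlmostDescendingPiEnum3}, if $\tau\notin\Cl\gamma$ the same lemma applies directly, while if $\tau\subsetneq\gamma$ I would pick a vertex $v\in\sigma\setminus\gamma$, set $\beta:=\tau\cup\{v\}$, observe $\beta\in\Cl\sigma\setminus\Cl T_f(\sigma)$, and combine $f(\sigma)\succeq f(\beta)$ (Lemma~\ref{lem:DescendingImagePi}) with $f(\beta)\succneqq f(\tau)$ (from~\ref{prop:AlmostDescendingPiEnum3} together with Definition~\ref{def:MDM}\ref{def:MDMenum3}).

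The genuinely new case is $\tau\notin\Cl\sigma$, where both~\ref{prop:AlmostDescendingPiEnum2} and~\ref{prop:AlmostDescendingPiEnum3} hold vacuously (every cofacet of $\tau$ inside $\Cl\sigma$ would force $\tau\in\Cl\sigma$), so descent must be proved with no usable hypothesis. Here the plan has two ingredients. The first is a purely local \emph{descent helper} (Desc), proved by induction on $\dim\sigma-\dim\eta$: \emph{if $\eta\in\Cl\sigma$, $\eta\neq\sigma$, and $H_f(\eta)\cap\Cl\sigma=\emptyset$, then $f(\sigma)\succneqq f(\eta)$.} In the base case $\eta$ is a facet of $\sigma$ not in $T_f(\sigma)$ (else $\sigma\in H_f(\eta)\cap\Cl\sigma$), so Definition~\ref{def:MDM}\ref{def:MDMenum4} gives strict descent; in the inductive step I use that $\eta$ lies in at least two facets of $\sigma$ while $\card T_f(\sigma)\le1$, hence some facet $\beta\supseteq\eta$ avoids $T_f(\sigma)$, and I chain $f(\sigma)\succneqq f(\beta)\succneqq f(\eta)$. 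The second ingredient is the first‑exit index: let $j$ be minimal with $\varrho(j)\notin\Cl\sigma$ (it exists and $j\ge2$ since $\varrho(1)\subsetneq\sigma$). A head or fixed cell only moves inside its own closure, so $\varrho(j-1)\in\dom\cV\setminus\Fix\cV$ and $\varrho(j)=\cV(\varrho(j-1))\notin\Cl\sigma$; thus $H_f(\varrho(j-1))\cap\Cl\sigma=\emptyset$ and $\varrho(j-1)\subsetneq\sigma$ (strict, by acyclicity). Then (Desc) gives $f(\sigma)\succneqq f(\varrho(j-1))$, while $\card T_f(\varrho(j-1))=0$ and $\varrho(j-1)\rightconnects{f}\tau$ let (S) give $f(\varrho(j-1))\succeq f(\tau)$; chaining finishes the case, strictly.

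The main obstacle is precisely this last case: because~\ref{prop:AlmostDescendingPiEnum2} and~\ref{prop:AlmostDescendingPiEnum3} become vacuous once $\tau$ leaves $\Cl\sigma$, the content there cannot come from the hypotheses and must be extracted from the rigidity of the gradient, which is why the first‑exit decomposition and the dimension‑induction helper (Desc) are needed; the secondary delicate point is the head‑cell case in the proof of (S), where the length‑controlled application of Lemma~\ref{lem:IfConnectsCoFaceConnects}\ref{lem:IfConnectsCoFaceConnectsEnum2} (justified by acyclicity) is essential to keep the induction well‑founded. The remaining bookkeeping, namely verifying that equality $f(\sigma)=f(\tau)$ propagates back through the chains only in the pure up‑step configuration $H_f(\sigma)=\{\tau\}$, is routine once one records that every appeal to Lemma~\ref{lem:DescendingImagePi} across a proper face is strict and that only the $H_f$‑step is weak.
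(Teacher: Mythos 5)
Your proof is correct, but it is organized quite differently from the paper's. The paper keeps a single induction on the length of the solution: in the only hard case $\tau\notin\Cl\sigma$ it picks the \emph{largest} index $i_0$ with $\tau\notin\Ex T_f(\varrho(i_0))$, proves descent from $\upsilon=\varrho(i_0)$ to $\tau$ by a short case analysis, and descent from $\sigma$ to $\upsilon$ by checking that the pair $(\sigma,\upsilon)$ again satisfies one of the hypotheses \ref{prop:AlmostDescendingPiEnum1}--\ref{prop:AlmostDescendingPiEnum3}, so the induction hypothesis of the proposition itself applies. You instead factor the statement through two auxiliary results: the unconditional descent statement (S) for simplices with $\card T_f(\sigma)=0$ (proved by peeling off the \emph{last} step of the solution, with the length control through Lemma~\ref{lem:IfConnectsCoFaceConnects}\ref{lem:IfConnectsCoFaceConnectsEnum2} correctly justified by acyclicity), and the codimension-induction helper (Desc), which is a useful variant of Lemma~\ref{lem:DescendingImagePi} with the hypothesis $H_f(\eta)\cap\Cl\sigma=\emptyset$ replacing $\eta\notin\Cl T_f(\sigma)$; you then glue these with the \emph{first}-exit index $j$ rather than the paper's last-good index $i_0$. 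Both decompositions work; yours makes it more transparent that once the solution leaves $\Cl\sigma$ the hypotheses become vacuous and the descent is forced by the gradient structure alone, at the cost of an extra lemma, while the paper's single induction is more economical. I checked the delicate points: the facet-counting in (Desc), the identification of $\varrho(j-1)$ as a tail cell, and the equality bookkeeping (the configuration $H_f(\sigma)=\{\varrho(m-1)\}$ together with $H_f(\varrho(m-1))=\{\tau\}$ is indeed impossible since $\dom\cV\cap\image\cV=\Fix\cV$) all go through.

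One small inaccuracy: acyclicity of $\Pi_f$ does \emph{not} forbid $\sigma\rightconnects{f}\sigma$ when $\sigma$ is critical, since then $\sigma\in\Cl\sigma=\Pi_f(\sigma)$ gives a nontrivial closed solution; the definition of an acyclic flow only forces $\cR=\Fix\cV$. Consequently your statement (S) should read ``equality only if $\sigma=\tau$ or $H_f(\sigma)=\{\tau\}$''; with that disjunct added, everything you do with (S) still works and matches the conclusion of the proposition, so this is a cosmetic repair rather than a gap.
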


	\begin{proof}
		First, we see that when $\tau\in\Cl\sigma\backslash\Cl T_f(\sigma)$, the result is obvious from Lemma \ref{lem:DescendingImagePi}. Now, suppose $\tau\in\Cl T_f(\sigma)$. We see that $\tau\in\Ex\sigma$. Also, $\tau\notin T_f(\sigma)$: otherwise, it would imply that $\Pi_f(\tau)=\{\sigma\}$ and we would have a cycle $\sigma\leftrightconnects{f}\tau$. Thus, $\tau\in\Ex\sigma\backslash T_f(\sigma)$ and we see from Proposition \ref{prop:GradientFlow} that $\tau\in\Pi_f(\sigma)$, so the result follows from Lemma \ref{lem:DescendingConditions}.

		All that is left to show is the case $\tau\notin\Cl\sigma$. Since $\sigma\rightconnects{f}\tau$, there exists a solution $\varrho:\Z\nrightarrow K$ with $\dom \varrho =\{0,1,...,n\}$ where $n\geq 1$ such that $\varrho(0)=\sigma$ and $\varrho(n)=\tau$. This part of the proposition is proved by induction on $n$.

		For $n=1$, we have $\tau\in\Pi_f(\sigma)$ and the result is straightforward from Lemma \ref{lem:DescendingConditions}. When $n>1$, notice that $\tau\notin\Cl\sigma$ implies $\tau\notin\Ex T_f(\sigma)$ where $\sigma=\varrho(0)$. Thus, let $i_0$ be the greatest $i=0,1,...,n-1$ such that $\tau\notin\Ex T_f(\varrho(i))$ and consider $\upsilon:=\varrho(i_0)$. We make the following statement:
		\begin{gather}
			f(\upsilon)\succeq f(\tau)\text{ where } f(\upsilon)=f(\tau) \text{ implies } \upsilon=\tau \text{ or }H_f(\upsilon)=\{\tau\}.\label{eq:proofAlmostDescendingPi1}
		\end{gather}
		Indeed:
		\begin{itemize}
			\item If $i_0=n-1$, then $\tau\in\Pi_f(\upsilon)$ and statement \ref{eq:proofAlmostDescendingPi1} follows from Lemma \ref{lem:DescendingConditions} because condition \ref{lem:DescendingConditionsEnum2} is verified by definition of $\upsilon$.

			\item If $i_0<n-1$, since $i_0$ is the greatest $i=0,1,...,n-1$ for which $\tau\notin\Ex T_f(\varrho(i))$, necessarily, $\tau\in\Ex T_f(\varrho(i_0+1))$.
			\begin{itemize}
				\item When $\card H_f(\upsilon)=1$, then $\Pi_f(\upsilon)= H_f(\upsilon) = \{\varrho(i_0+1)\}$, thus $T_f(\varrho(i_0+1)) = \{\upsilon\}$ and $\tau\in\Ex\upsilon\subset\Cl\upsilon$. Also, $\card T_f(\upsilon)$ must be zero, so $\tau\in\Cl\upsilon = \Cl\upsilon\backslash \Cl T_f(\upsilon)$.
				\item When $\card H_f(\upsilon)=0$, then $\varrho(i_0+1)\in\Pi_f(\upsilon)\subseteq\Cl\upsilon$ and, because $\tau\in\Ex T_f(\varrho(i_0+1))\subset\Cl\varrho(i_0+1)$, we deduce $\tau\in\Cl\upsilon$. Also, $\tau\notin\Ex T_f(\upsilon)$ by definition of $\upsilon$ and $\tau\notin T_f(\upsilon)$: otherwise, we would have a cycle $\upsilon\leftrightconnects{f}\tau$, which would contradict the acyclicity of $\Pi_f$. Hence, $\tau\notin\Cl T_f(\upsilon)$, so $\tau\in\Cl\upsilon\backslash\Cl T_f(\upsilon)$.
			\end{itemize}
			In both cases, statement \ref{eq:proofAlmostDescendingPi1} follows from Lemma \ref{lem:DescendingImagePi}.
		\end{itemize}

		We now show this second statement, still considering $\upsilon$ as defined above and $\tau\notin\Cl\sigma$:
		\begin{gather}
			f(\sigma)\succeq f(\upsilon)\text{ where } f(\sigma)=f(\upsilon) \text{ implies } \sigma=\upsilon \text{ or }H_f(\sigma)=\{\upsilon\}.\label{eq:proofAlmostDescendingPi2}
		\end{gather}
		This follows from the induction hypothesis, which can be used since $\sigma\rightconnects{f}\upsilon$ and at least one of the conditions \ref{prop:AlmostDescendingPiEnum1}, \ref{prop:AlmostDescendingPiEnum2} or \ref{prop:AlmostDescendingPiEnum3} is verified for $\sigma$ and $\upsilon$. Indeed, suppose all three conditions are false, meaning that $\upsilon\in\Ex T_f(\sigma)$ and there exists a $\beta\in H_f(\upsilon)$ such that $\beta\in\Cl\sigma\backslash T_f(\sigma)$. Then, we have $\Pi_f(\upsilon) = H_f(\upsilon) = \{\beta\}$, so $\varrho(i_0+1) = \beta$. Also, by definition of $i_0$, we see that $\tau\in\Ex T_f(\beta) = \Ex\upsilon\subset\Cl\upsilon$. Since $\upsilon\in\Ex T_f(\sigma)$ implies $\upsilon\in\Cl\sigma$, we find $\tau\in\Cl\sigma$, a contradiction.

		The proposition follows from statements \ref{eq:proofAlmostDescendingPi1} and \ref{eq:proofAlmostDescendingPi2}.
	\end{proof}

	In particular, since $T_f(\sigma)=H_f(\sigma)=\emptyset$ when $\sigma$ is critical, we can deduce the following.

	\begin{cor}\label{coro:ConnectionCriticalImpliesGreaterImage}
		Let $f:K\rightarrow\R^\maxdim$ be \mdm and consider $\sigma,\tau\in K$ such that $\sigma\rightconnects{f}\tau$ and $\sigma\neq \tau$. If either $\sigma$ or $\tau$ is critical, then $f(\sigma)\succneqq f(\tau)$.
	\end{cor}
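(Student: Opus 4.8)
The plan is to derive this corollary directly from Proposition~\ref{prop:AlmostDescendingPi}, whose conclusion already guarantees that $\sigma\rightconnects{f}\tau$ forces $f(\sigma)\succeq f(\tau)$ as soon as one of its three conditions \ref{prop:AlmostDescendingPiEnum1}--\ref{prop:AlmostDescendingPiEnum3} holds, with equality only when $\sigma=\tau$ or $H_f(\sigma)=\{\tau\}$. So the whole argument reduces to two tasks: first, to check that one of those conditions is automatically satisfied once $\sigma$ or $\tau$ is assumed critical; and second, to rule out the equality case under the standing hypothesis $\sigma\neq\tau$. I would split into two cases according to which of the two simplices is critical.

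First I would treat the case where $\sigma$ is critical. Then $T_f(\sigma)=H_f(\sigma)=\emptyset$, so in particular $\card T_f(\sigma)=0$, which is exactly condition~\ref{prop:AlmostDescendingPiEnum1} of Proposition~\ref{prop:AlmostDescendingPi}. Hence $f(\sigma)\succeq f(\tau)$. To upgrade $\succeq$ to $\succneqq$, I note that equality would require $\sigma=\tau$ or $H_f(\sigma)=\{\tau\}$; the former is excluded by hypothesis and the latter is impossible since $H_f(\sigma)=\emptyset$. Therefore $f(\sigma)\succneqq f(\tau)$.

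Next I would treat the case where $\tau$ is critical, so $T_f(\tau)=H_f(\tau)=\emptyset$. Here condition~\ref{prop:AlmostDescendingPiEnum3} holds vacuously: since $H_f(\tau)=\emptyset$, the requirement that every relevant cofacet $\beta\supset\tau$ satisfy $\beta\notin H_f(\tau)$ is trivially met. Proposition~\ref{prop:AlmostDescendingPi} again yields $f(\sigma)\succeq f(\tau)$. For strictness I would again inspect the equality case: $\sigma=\tau$ is excluded, and $H_f(\sigma)=\{\tau\}$ would mean $\tau$ is a cofacet of $\sigma$ with $f(\tau)\preceq f(\sigma)$, i.e.\ $\sigma\subset\tau$ with $f(\sigma)\succeq f(\tau)$, which places $\sigma\in T_f(\tau)$ and contradicts $T_f(\tau)=\emptyset$. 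Hence $f(\sigma)\succneqq f(\tau)$ in this case as well.

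I do not expect any genuine obstacle here, since the statement is an immediate specialization of Proposition~\ref{prop:AlmostDescendingPi}; the only step requiring care is matching each criticality hypothesis to the right condition (condition~\ref{prop:AlmostDescendingPiEnum1} when $\sigma$ is critical, condition~\ref{prop:AlmostDescendingPiEnum3} when $\tau$ is critical) and then discarding the equality case. The subtlest verification is the elimination of $H_f(\sigma)=\{\tau\}$ when $\tau$ is critical, which rests on reading off $\sigma\in T_f(\tau)$ directly from the definitions of $H_f$ and $T_f$.
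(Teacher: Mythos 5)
Your proposal is correct and follows exactly the route the paper intends: the corollary is stated as an immediate consequence of Proposition~\ref{prop:AlmostDescendingPi}, justified only by the remark that $T_f$ and $H_f$ vanish at a critical simplex, and your two cases (condition~\ref{prop:AlmostDescendingPiEnum1} when $\sigma$ is critical, condition~\ref{prop:AlmostDescendingPiEnum3} when $\tau$ is critical) together with the elimination of the equality alternatives are precisely the details being left to the reader. The verification that $H_f(\sigma)=\{\tau\}$ would force $\sigma\in T_f(\tau)$ is the right way to discharge the last equality case.
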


	\section{Morse theorems}\label{sec:MorseTheorems}

	Here, we explain how the main theorems from Morse-Forman theory \citep[see][Section 3]{Forman1998} generalize in the multiparameter setting. A theorem extending some of those results is also proposed.

	\subsection{Classical results}

	One of the main theorems in both classical and discrete Morse theory is that a given topological space on which is defined a Morse function $f$ is always homotopy equivalent to a CW complex having $m_p(f)$ cells of dimension $p$ for each $p=0,1,2,...$, where $m_p(f)$ is the number of critical points of $f$ of index $p$. This still holds in this setting.

	\begin{prop}\label{prop:HomotopyEquivalenceCriticalComplex}
		Let $K$ be a simplicial complex. Suppose there exists a \mdm function $f:K\rightarrow\R^\maxdim$ and let $m_p(f)$ be the number of critical points of index $p$ of $f$. The complex $K$ is homotopy equivalent to a CW complex with exactly
		$m_p(f)$ cells of dimension $p$.
	\end{prop}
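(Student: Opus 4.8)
The plan is to reduce the statement to the corresponding result of Forman for real-valued discrete Morse functions, proved in \citet[Section 3]{Forman1998}. The bridge is Corollary~\ref{coro:ExistenceMDMwithSameGradient}, which lets us trade the multiparameter hypothesis for a one-dimensional one without disturbing the relevant combinatorial data, namely the gradient field and its fixed points.

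First I would invoke Corollary~\ref{coro:ExistenceMDMwithSameGradient} with $\maxdim'=1$ to obtain a \mdm function $g:K\rightarrow\R$ that shares the gradient vector field $\cV$ of $f$. Since the codomain is totally ordered, $g$ is precisely a discrete Morse function in the sense of Forman: conditions \ref{def:MDMenum1} and \ref{def:MDMenum2} are the usual Forman conditions, while conditions \ref{def:MDMenum3} and \ref{def:MDMenum4} hold automatically because $\preceq$ coincides with the total order $\leq$ on $\R$ (if $\beta\supset\sigma$ but $\beta\notin H_g(\sigma)$, then necessarily $g(\beta)>g(\sigma)$, and symmetrically for facets). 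Next I would recall that the critical points of any \mdm function are exactly the fixed points of its gradient; as $f$ and $g$ have the same gradient $\cV$, they have the same critical simplices, each carrying the same index, namely its dimension. Hence $m_p(g)=m_p(f)$ for every $p$.

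Finally I would apply Forman's main theorem to $g$: a simplicial complex carrying a discrete Morse function is homotopy equivalent to a CW complex with exactly $m_p(g)$ cells of dimension $p$. Substituting $m_p(g)=m_p(f)$ gives the result.

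I expect essentially no obstacle here, since the real content has been front-loaded into Corollary~\ref{coro:ExistenceMDMwithSameGradient} and into the identification of critical points with gradient fixed points. The only point needing a moment's care is verifying that a one-dimensional \mdm function is genuinely a Forman function, so that the classical theorem applies verbatim; this is immediate once one observes that $\preceq$ restricts to the total order on $\R$. A self-contained alternative would be to reprove the handle-and-collapse decomposition directly for $\cV$, using the acyclicity from Proposition~\ref{prop:AcyclicityGradient} together with the flow machinery of Section~\ref{sec:CombinatorialDynamics}, but the reduction above is far shorter and avoids duplicating Forman's construction.
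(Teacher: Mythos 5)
Your proposal is correct and matches the paper's proof essentially verbatim: both reduce to the one-dimensional case via Corollary~\ref{coro:ExistenceMDMwithSameGradient} and then invoke Forman's Corollary 3.5. The extra care you take in checking that a one-dimensional \mdm function is a genuine Forman function is a point the paper leaves implicit, but it is the same argument.
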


	\begin{proof}
		From Corollary \ref{coro:ExistenceMDMwithSameGradient}, if there exists a \mdm function $f:K\rightarrow\R^\maxdim$, we know there exists a discrete Morse function $g:K\rightarrow\R$ having the same gradient field as $f$, meaning that $m_p(f) = m_p(g)$. Since Proposition \ref{prop:HomotopyEquivalenceCriticalComplex} is well known for $k=1$ \citep[Corollary 3.5]{Forman1998}, we have the result.
	\end{proof}

	Note that, from this proposition, we could find an alternative proof to the famous Morse inequalities from Corollary \ref{coro:MorseInequalitiesMDMfunction}.

	Furthermore, Forman's results on the homotopy type of a sublevel set still hold in the multiparameter setting. For a \mdm function $f:K\rightarrow\R^\maxdim$ and a vector $a\in\R^\maxdim$, the \emph{sublevel set} $K(a)$ is the smallest subcomplex of $K$ containing all $\sigma\in K$ for which $f(\sigma)\preceq a$:
	\begin{gather*}
		K(a) := \bigcup_{\substack{\sigma\in K\\ f(\sigma)\preceq a}}\bigcup_{\alpha\subseteq\sigma}\alpha.
	\end{gather*}
	We could easily show that, if we note $\cV$ the gradient field of $f$, $K(a)$ is $\cV$-compatible for all $a\in\R^\maxdim$.
	Moreover, we see that a simplex $\sigma\in K_p$ is in $K(a)$ when either $f(\sigma)\preceq a$ or $f(\tau)\preceq a$ for some $\tau \supset \sigma$. To check if the second condition is true, it is sufficient to consider cofacets $\tau^{(p+1)}\supset\sigma$, as suggests the next lemma. It is proven similarly to its one-dimensional analogue \citep[Lemma 3.2]{Forman1998}.

	\begin{lem}\label{lem:CofacetSmallerValue}
		Let $f:K\rightarrow\R^\maxdim$ be \mdm and $\sigma\in K_p$. For all $\tau \supset \sigma$, there exists a  $\beta\in K_{p+1}$ such that $\sigma \subset \beta\subseteq\tau$ and $f(\beta)\preceq f(\tau)$.
	\end{lem}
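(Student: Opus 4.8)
The plan is to mimic Forman's proof of his Lemma 3.2 \citep[Lemma 3.2]{Forman1998} by induction on the codimension $q := \dim\tau-\dim\sigma \geq 1$, the only genuine adaptation being to replace his appeal to the total order on $\R$ by the comparability that conditions \ref{def:MDMenum2} and \ref{def:MDMenum4} of Definition \ref{def:MDM} force upon facets. For the base case $q=1$, the simplex $\tau$ is itself a cofacet of $\sigma$, so I would simply take $\beta=\tau$ and observe that $f(\tau)\preceq f(\tau)$ holds trivially.

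For the inductive step I would assume $q\geq 2$ and that the lemma holds for all cofaces of codimension $q-1$. First I would count the facets of $\tau$ that still contain $\sigma$: since $\sigma$ has $p+1$ vertices and $\tau$ has $p+q+1$ vertices, exactly $q$ vertices of $\tau$ lie outside $\sigma$, and deleting any one of them produces a facet $\gamma\in K_{p+q-1}$ with $\sigma\subset\gamma\subset\tau$; hence there are exactly $q\geq 2$ such facets. The key point is then that condition \ref{def:MDMenum2} gives $\card T_f(\tau)\leq 1$, so at most one facet of $\tau$ lies in $T_f(\tau)$. Consequently at least $q-1\geq 1$ of the facets containing $\sigma$ lie outside $T_f(\tau)$, and for any such facet $\gamma$ condition \ref{def:MDMenum4} forces $f(\gamma)\precneqq f(\tau)$, in particular $f(\gamma)\preceq f(\tau)$.

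To finish, I would apply the induction hypothesis to the pair $\sigma\subset\gamma$, whose codimension is $q-1$, obtaining a cofacet $\beta\in K_{p+1}$ of $\sigma$ with $\sigma\subset\beta\subseteq\gamma$ and $f(\beta)\preceq f(\gamma)$. Transitivity of $\preceq$ then yields $f(\beta)\preceq f(\gamma)\preceq f(\tau)$ together with $\sigma\subset\beta\subseteq\gamma\subset\tau$, which completes the step and hence the induction.

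The step I expect to be the crux is the comparability guarantee rather than the counting. Over $\R$ it suffices that a facet fail to have value $\geq f(\tau)$, which automatically makes its value smaller; here $\preceq$ is only a partial order, so a facet outside $T_f(\tau)$ could \emph{a priori} carry an incomparable value. It is precisely condition \ref{def:MDMenum4} of the \mdm definition that excludes this and delivers $f(\gamma)\precneqq f(\tau)$, and this is the single place where the multidimensional hypotheses do essential work beyond the purely combinatorial facet count.
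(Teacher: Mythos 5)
Your proof is correct and is exactly the argument the paper intends: the paper omits the proof of Lemma \ref{lem:CofacetSmallerValue}, stating only that it is proven similarly to Forman's Lemma 3.2, and your induction on codimension --- with the facet count, the bound $\card T_f(\tau)\leq 1$ from condition \ref{def:MDMenum2}, and condition \ref{def:MDMenum4} supplying the comparability $f(\gamma)\precneqq f(\tau)$ that the partial order $\preceq$ does not give for free --- is precisely that adaptation. You have also correctly identified condition \ref{def:MDMenum4} as the one place where the multidimensional hypotheses do essential work.
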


	To state Propositions \ref{prop:RegularSimplicesCollapse} and \ref{prop:CriticalSimplexHomotopyChange}, which are analogous to Theorems 3.3 and 3.4 in \citep{Forman1998}, we consider the following subset of $\R^\maxdim$:

	\begin{gather*}
		Q^b_a := \left\lbrace c=(c_1,...,c_\maxdim)\in\R^\maxdim\ \big|\ c\preceq b\text{ and } c_i>a_i\text{ for some }i=1,...,\maxdim\right\rbrace.
	\end{gather*}

	\begin{prop}\label{prop:RegularSimplicesCollapse}
		Let $f:K\rightarrow\R^\maxdim$ be \mdm and consider $a\precneqq b\in\R^\maxdim$. If there is no critical simplex $\sigma\in K$ such that $f(\sigma)\in Q_a^b$, then
		\begin{gather*}
			K(b)\searrow K(a).
		\end{gather*}
	\end{prop}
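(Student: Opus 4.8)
The plan is to realize $R:=K(b)\setminus K(a)$ as a region of the gradient field $\cV$ of $f$ consisting entirely of regular pairs, and then to collapse it pair by pair in an order dictated by the values of $f$. First I would record the structural facts: since $a\precneqq b$ we have $K(a)\subseteq K(b)$, both are $\cV$-compatible subcomplexes, and hence $R$ is again $\cV$-compatible, because the defining biconditional $\sigma^-\in A\Leftrightarrow\sigma^+\in A$ is preserved under set difference.

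The heart of the hypothesis is that no simplex of $R$ is critical. Suppose $\sigma\in R$ is critical. Since $\sigma\in K(b)$ there is $\tau\supseteq\sigma$ with $f(\tau)\preceq b$; if $\tau\supsetneq\sigma$, Lemma \ref{lem:CofacetSmallerValue} yields a cofacet $\beta\supset\sigma$ with $f(\beta)\preceq f(\tau)\preceq b$, and as $\sigma$ is critical we have $\beta\notin H_f(\sigma)$, so $f(\sigma)\precneqq f(\beta)\preceq b$ by Definition \ref{def:MDM}\ref{def:MDMenum3}; thus $f(\sigma)\preceq b$ in every case. On the other hand $\sigma\notin K(a)$ forces $f(\sigma)\not\preceq a$, i.e. $f(\sigma)_i>a_i$ for some $i$. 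Therefore $f(\sigma)\in Q_a^b$, contradicting the hypothesis. So every simplex of $R$ is regular, and by $\cV$-compatibility $R$ is partitioned into gradient pairs $\{\alpha,\cV(\alpha)\}$ with $\card H_f(\alpha)=1$.

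Finally I would carry out the collapse. Fix a total order on the simplices of $R$ refining the strict product order, i.e. $f(\gamma)\succneqq f(\delta)$ implies $\gamma$ precedes $\delta$; such a linear extension exists because $\succneqq$ pulls back to a strict partial order on cells. I process the pairs $(\alpha,\cV(\alpha))$ in order of their lower cell $\alpha$, largest first, claiming each removal is an elementary collapse with free face $\alpha$ and unique cofacet $\cV(\alpha)$. The point to verify is that when $(\alpha,\cV(\alpha))$ is processed, no cofacet $\beta'\neq\cV(\alpha)$ of $\alpha$ survives in the current complex. Any such $\beta'$ lies in $R$ (a cofacet in $K(a)$ would force $\alpha\in K(a)$), and since $\beta'\notin H_f(\alpha)=\{\cV(\alpha)\}$ we get $f(\beta')\succneqq f(\alpha)$ by Definition \ref{def:MDM}\ref{def:MDMenum3}. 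If $\beta'$ is the lower cell of its own pair, it precedes $\alpha$ and is already gone; if $\beta'=\cV(\gamma)$ is an upper cell, then $f(\gamma)\succeq f(\beta')\succneqq f(\alpha)$, so its lower cell $\gamma$ precedes $\alpha$ and that pair was already collapsed. Either way $\beta'$ has been removed, so $\alpha$ is free, and iterating exhausts $R$ to give $K(b)\searrow K(a)$.

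The main obstacle is precisely this last ordering argument: one must guarantee that every non-matched cofacet carries a strictly larger $f$-value and is therefore collapsed earlier. This is where conditions \ref{def:MDMenum3}--\ref{def:MDMenum4} in the definition of a \mdm function (equivalently, the acyclicity of the gradient from Proposition \ref{prop:AcyclicityGradient}) do the work, replacing the single real value used in Forman's one-dimensional argument by the partial order $\preceq$ on $\R^\maxdim$.
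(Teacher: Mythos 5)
Your proposal is correct, but it follows a different route from the paper's. You and the paper agree on the first half: the argument that a critical simplex $\sigma\in K(b)\setminus K(a)$ would have $f(\sigma)\in Q_a^b$ (via Lemma \ref{lem:CofacetSmallerValue} and condition \ref{def:MDMenum3} of Definition \ref{def:MDM}) is exactly the one appearing inside the paper's proof of Theorem \ref{theo:ExtendedMorseTheorem}, and the $\cV$-compatibility of $K(b)\setminus K(a)$ is used in the same way to split it into regular pairs. Where you diverge is the collapsing mechanism. The paper treats Proposition \ref{prop:RegularSimplicesCollapse} as the case $I=\emptyset$ of Theorem \ref{theo:ExtendedMorseTheorem} and delegates the collapse to Lemma \ref{lem:AcyclicNoCriticalCollapse}, which never looks at $f$-values: it uses the acyclicity of the flow $\Pi_\cV$ to pick a cell of $K\setminus L$ maximal for the flow-induced order $\leq_\cV$, shows that cell is a free face, and inducts on $\card(K\setminus L)$. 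You instead order the pairs by a linear extension of the strict partial order that $\succneqq$ pulls back along $f$, and verify freeness of each lower cell directly from conditions \ref{def:MDMenum3}--\ref{def:MDMenum4}; this is the faithful multiparameter transcription of Forman's original proof of his Theorem 3.3, and your ordering argument (every unmatched surviving cofacet $\beta'$ of $\alpha$ has $f(\beta')\succneqq f(\alpha)$, hence belongs to a pair whose lower cell strictly precedes $\alpha$) is sound. The trade-off: your argument is more elementary and self-contained, but it is tied to the existence of the function values; the paper's lemma is stated for an arbitrary acyclic flow and an arbitrary $\cV$-compatible subcomplex containing the fixed points, which is what lets the same tool collapse $K(b)$ onto sets like $K(a)\cup A\cup M(I)$ in Theorem \ref{theo:ExtendedMorseTheorem}, sets that are carved out dynamically rather than by sublevel conditions and to which your value-ordering argument would not directly apply.
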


	\begin{prop}\label{prop:CriticalSimplexHomotopyChange}
		Let $f:K\rightarrow\R^\maxdim$ be \mdm and consider $a\precneqq b\in\R^\maxdim$. Suppose $\sigma\in K_p$ is the only critical simplex of $f$ with $f(\sigma)\in Q_a^b$, then
		\begin{gather*}
			K(b)\simeq K(a)\bigcup_{\sbdy e^p}e^p
		\end{gather*}
		where $e^p$ is a cell of dimension $p$.
	\end{prop}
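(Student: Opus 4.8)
The plan is to reduce everything to the structure of the set of simplices lying between the two sublevel sets and then to recognise that structure as the data of a single cell attachment, exactly as in the collapsing Proposition \ref{prop:RegularSimplicesCollapse}. Write $D := K(b)\setminus K(a)$. Since $a\precneqq b$ we have $K(a)\subseteq K(b)$, and both are $\cV$-compatible subcomplexes (as recalled just after the definition of $K(a)$). A short check shows that the set-difference of two $\cV$-compatible sets is again $\cV$-compatible: for any simplex, $\sigma^-\in D\Leftrightarrow(\sigma^-\in K(b)\text{ and }\sigma^-\notin K(a))\Leftrightarrow(\sigma^+\in K(b)\text{ and }\sigma^+\notin K(a))\Leftrightarrow\sigma^+\in D$. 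Hence $D$ decomposes into the fixed points of $\cV$ that it contains together with complete gradient pairs $\{\alpha,\cV(\alpha)\}$.

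First I would pin down the critical simplices inside $D$. Using Lemma \ref{lem:CofacetSmallerValue} I claim that a critical simplex $\sigma'$ belongs to $K(a)$ if and only if $f(\sigma')\preceq a$: the direction $\Leftarrow$ is immediate, and for $\Rightarrow$, if some coface $\tau\supseteq\sigma'$ has $f(\tau)\preceq a$ then the lemma produces a cofacet $\beta$ with $\sigma'\subset\beta\subseteq\tau$ and $f(\beta)\preceq f(\tau)\preceq a$; since $\sigma'$ is critical, $\beta\notin H_f(\sigma')$, so $f(\sigma')\precneqq f(\beta)\preceq a$. The same argument holds with $b$ in place of $a$. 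Consequently a critical simplex lies in $D$ precisely when $f(\sigma')\preceq b$ and $f(\sigma')\not\preceq a$, that is, when $f(\sigma')\in Q_a^b$; by hypothesis the only such simplex is $\sigma$. Therefore $\sigma$ is the unique fixed point of $\cV$ in $D$, and $D\setminus\{\sigma\}$ is a disjoint union of gradient pairs.

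The heart of the matter is then to read $K(b)$ as built from $K(a)$ by the pairs of $D\setminus\{\sigma\}$ and the single cell $\sigma$. The restriction of $\cV$ to $D$ is an acyclic partial matching with exactly one unmatched cell, $\sigma$, of dimension $p$, acyclicity being inherited from that of $\cV$ (Proposition \ref{prop:AcyclicityGradient}). Mirroring the proof of Proposition \ref{prop:RegularSimplicesCollapse}, I would use this acyclicity to linearly order the additions of the cells of $D$ to $K(a)$ so that each gradient pair $\{\alpha,\cV(\alpha)\}$ is introduced as an elementary expansion (the inverse of an elementary collapse, hence a homotopy equivalence) with $\alpha$ appearing as a free face whose unique new cofacet is $\cV(\alpha)$, while $\sigma$ alone is introduced as a genuine cell attachment. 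Because $\sigma$ is unmatched and every coface of $\sigma$ lying in $K(b)$ also lies in $D$ (if a coface of $\sigma$ were in $K(a)$ then, $K(a)$ being a subcomplex, $\sigma$ would be too), the order can be chosen to extend the face partial order, so that the whole boundary $\sbdy\sigma$ is present when $\sigma$ is attached. All the expansion steps are homotopy equivalences and only the attachment of the $p$-cell $\sigma$ alters the homotopy type, yielding $K(b)\simeq K(a)\bigcup_{\sbdy e^p}e^p$.

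The main obstacle is precisely this last step: turning the acyclic matching on $D$ into an admissible sequence of elementary expansions together with a single clean cell attachment. This is the relative, one-critical-cell version of the fundamental theorem of discrete Morse theory, the analogue for $\R^\maxdim$-valued functions of Forman's Theorem 3.4, and it requires the same careful bookkeeping on the expansion order that already underlies Proposition \ref{prop:RegularSimplicesCollapse}. An alternative route that avoids re-proving it would be to reduce to Forman's one-dimensional result directly: one checks that $K(a)$ is forward-invariant under the gradient flow $\Pi_f$ (for $\sigma'\in K(a)$ one has $\Pi_f(\sigma')\subseteq K(a)$, using $\cV$-compatibility when $\card H_f(\sigma')=1$ and the subcomplex property otherwise), so that $K(a)$ and $K(b)$ are sublevel sets of a real-valued discrete Morse function adapted to $\cV$, to which Forman's Theorem 3.4 applies verbatim.
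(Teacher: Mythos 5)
Your preliminary reductions are correct and match computations the paper actually performs: $D=K(b)\setminus K(a)$ is $\cV$-compatible, and your use of Lemma \ref{lem:CofacetSmallerValue} to show that a critical simplex lies in $K(a)$ (resp.\ $K(b)$) iff its value is $\preceq a$ (resp.\ $\preceq b$), so that $\sigma$ is the unique fixed point of $\cV$ in $D$, is exactly the argument appearing inside the paper's proof of Theorem \ref{theo:ExtendedMorseTheorem}. But the paper does not take your build-up route. It derives this proposition as the special case $I=\{\sigma\}$ of Theorem \ref{theo:ExtendedMorseTheorem}: it partitions $K(b)\setminus K(a)$ into the set $B$ of cells that flow into $\sigma$, the Morse set $M(I)=\{\sigma\}$, and the set $A$ of cells that do not flow into $\sigma$; checks via Lemma \ref{lem:SubsetNotConnectSubcomplex} that $K(a)\subseteq K(a)\cup A\subseteq K(a)\cup A\cup M(I)\subseteq K(b)$ is a chain of $\cV$-compatible subcomplexes; and then applies the collapsing Lemma \ref{lem:AcyclicNoCriticalCollapse} twice, once to collapse $B$ away from above and once to collapse $A$ onto $K(a)$ from below. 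That design buys precisely what your proof lacks: no global linear ordering of cell additions is ever constructed, the only combinatorial engine being the ``pick a $\leq_\cV$-maximal non-fixed cell, it is a free face'' induction already proved in Lemma \ref{lem:AcyclicNoCriticalCollapse}.

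The genuine gap is the step you yourself flag as the main obstacle: you assert, but do not prove, that the acyclic matching on $D$ admits a linear ordering in which every gradient pair enters as an elementary expansion (its lower cell being a free face at that moment) and $\sigma$ enters as a single attachment with its entire boundary already present. This is the relative, one-critical-cell fundamental theorem of discrete Morse theory; it is true, but it requires the reversed-Hasse-diagram acyclicity argument to be spelled out, and it is not ``the same bookkeeping as Proposition \ref{prop:RegularSimplicesCollapse}'' in this paper, since that proposition is proved by downward collapses, not upward expansions. Your fallback, realizing $K(a)\subseteq K(b)$ as sublevel sets of a real-valued discrete Morse function with gradient $\cV$ and invoking Forman's Theorem 3.4, would also work, but the existence of such a function adapted to the two given subcomplexes is itself a nontrivial construction carried out neither by you nor by the paper (Corollary \ref{coro:ExistenceMDMwithSameGradient} only matches the gradient, not the sublevel filtration). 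As written, the central topological content of the proposition is deferred rather than established.
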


	The proofs of these two propositions are omitted: Proposition \ref{prop:RegularSimplicesCollapse} is a direct consequence of Lemma \ref{lem:AcyclicNoCriticalCollapse} whereas Proposition \ref{prop:CriticalSimplexHomotopyChange} is a particular case of Theorem \ref{theo:ExtendedMorseTheorem}. Both of these results will be proved hereafter.

	\subsection{Extended Morse theorem}

	In the one-dimensional setting, the last two propositions suffice to describe all changes in homotopy type of the sublevel set $K(a)$ as $a$ increases. Indeed, for a discrete Morse function $f:K\rightarrow\R$, consider a critical simplex $\sigma$. From Corollary \ref{coro:ConnectionCriticalImpliesGreaterImage}, we see that for any critical simplex $\alpha\subsetneq\sigma$, we have $f(\alpha) < f(\sigma)$ and conversely, for any critical simplex $\beta\supsetneq\sigma$, we have $f(\sigma) < f(\beta)$. Hence, we can always choose parameters $a$ large enough and $b$ small enough so that, at least locally, $\sigma$ is the unique critical simplex with $f(\sigma)\in Q_a^b$.

	When $f:K\rightarrow\R^\maxdim$ is \mdm with $\maxdim>1$, we can see that this is not necessarily true by considering the following very simple example. Let $f:K\rightarrow\R^2$ be as defined in Figure \ref{fig:NecessityExtendedMorseTheorem} and consider the critical edge $\sigma$ with $f(\sigma)=(1,1)$. If we choose $a\in\R^2$ so that $f(\alpha)\preceq a$ for each (critical) vertex $\alpha\subset\sigma$, we then have $(1,1)\preceq a$, so $f(\sigma)\notin Q_a^b$ for all $b\in\R^2$. Thus, in order to have $f(\sigma)\in Q_a^b$, we must have at least one vertex $\alpha$ with $f(\alpha)\in Q_a^b$ as well. Consequently, the hypotheses of Proposition \ref{prop:CriticalSimplexHomotopyChange} may not be verified for $\sigma$.

	\begin{figure}[h]
		\centering
		\begin{tikzpicture}[scale=3]
			\draw[very thick] (0,0)node{$\bullet$} -- (1,0)node{$\bullet$};
			\draw (0,0)node[below]{$(0,1)$} (0.5,0)node[above]{$(1,1)$} (1,0)node[below]{$(1,0)$};
		\end{tikzpicture}
		\caption{A \mdm function $f:K\rightarrow\R^2$ for which there exists no $a,b\in\R^2$ such that $\sigma := f^{-1}(1,1)$ is the unique critical simplex with $f(\sigma)\in Q_a^b$}\label{fig:NecessityExtendedMorseTheorem}
	\end{figure}
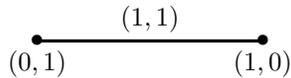

	To overcome this gap, we come up with an additional result, Theorem \ref{theo:ExtendedMorseTheorem}, for which no hypothesis on the number of critical simplices $\sigma$ with $f(\sigma)\in Q_a^b$ is needed. To prove it, preliminary results are necessary.

	\begin{lem}\label{lem:SubsetNotConnectSubcomplex}
		Let $\Pi_\cV:K\multimap K$ be a flow. A $\cV$-compatible subset $L\subseteq K$ is a subcomplex of $K$ if and only if $\Pi_\cV(L)\subseteq L$.
	\end{lem}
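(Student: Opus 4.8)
The plan is to prove both implications by splitting each $\sigma\in L$ according to the trichotomy $K = \Fix\cV \sqcup (\image\cV\backslash\Fix\cV)\sqcup(\dom\cV\backslash\Fix\cV)$ and reading off $\Pi_\cV(\sigma)$ directly from Definition~\ref{def:flow}. Throughout I would use the three identities recorded just after the definition of $\cV$-compatibility: $\sigma^-=\sigma=\sigma^+$ on $\Fix\cV$, $\sigma^-=\cV^{-1}(\sigma)\subsetneq\sigma=\sigma^+$ on $\image\cV\backslash\Fix\cV$, and $\sigma=\sigma^-\subsetneq\sigma^+=\cV(\sigma)$ on $\dom\cV\backslash\Fix\cV$. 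The $\cV$-compatibility of $L$ is a standing hypothesis in both directions, and it will only ever be invoked to handle the ``upward'' arrows $\sigma\mapsto\cV(\sigma)$.

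For the forward implication, assume $L$ is a subcomplex and take $\sigma\in L$. If $\sigma\in\Fix\cV$ then $\Pi_\cV(\sigma)=\Cl\sigma\subseteq L$ since $L$ is closed under faces; if $\sigma\in\image\cV\backslash\Fix\cV$ then $\Pi_\cV(\sigma)\subseteq\Ex\sigma\subseteq\Cl\sigma\subseteq L$ for the same reason. The only nontrivial case is $\sigma\in\dom\cV\backslash\Fix\cV$, where $\Pi_\cV(\sigma)=\{\cV(\sigma)\}=\{\sigma^+\}$ points \emph{upward}; here I would invoke $\cV$-compatibility: since $\sigma=\sigma^-\in L$, compatibility forces $\sigma^+\in L$, so $\Pi_\cV(\sigma)\subseteq L$. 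This yields $\Pi_\cV(L)\subseteq L$.

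For the converse, assume $\Pi_\cV(L)\subseteq L$; it suffices to prove $\Cl\sigma\subseteq L$ for every $\sigma\in L$. The key observation I would establish first is that $\sigma\in L$ implies $\sigma^+\in L$: on $\Fix\cV$ and on $\image\cV\backslash\Fix\cV$ one has $\sigma=\sigma^+$, while on $\dom\cV\backslash\Fix\cV$ one has $\sigma=\sigma^-$ and compatibility gives $\sigma^+\in L$. Since $\sigma\subseteq\sigma^+$ and $\sigma^+\in\image\cV$ always (easily checked from the trichotomy), it then remains to show $\Cl\sigma^+\subseteq L$ for this element of $L\cap\image\cV$. If $\sigma^+\in\Fix\cV$, this is immediate from $\Pi_\cV(\sigma^+)=\Cl\sigma^+\subseteq L$; if $\sigma^+\in\image\cV\backslash\Fix\cV$, then $\Pi_\cV(\sigma^+)=\Ex\sigma^+\backslash\{\cV^{-1}(\sigma^+)\}\subseteq L$ accounts for every proper face of $\sigma^+$ except $\cV^{-1}(\sigma^+)=(\sigma^+)^-$, while applying compatibility to $\sigma^+$ itself (for which $(\sigma^+)^+=\sigma^+\in L$) supplies this missing face $(\sigma^+)^-$. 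Either way $\Cl\sigma^+\subseteq L$, hence $\Cl\sigma\subseteq L$, and $L$ is a subcomplex. The main obstacle, in both directions, is precisely the $\dom\cV\backslash\Fix\cV$ case: there the flow records only the cofacet $\cV(\sigma)$ and carries no information about the faces of $\sigma$, so $\cV$-compatibility is exactly the hypothesis that lets one pass between $\sigma$ and $\cV(\sigma)$; reducing every simplex of $L$ to the ``top'' element $\sigma^+\in\image\cV$ is the device that isolates this single difficulty.
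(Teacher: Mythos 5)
Your proof is correct. The forward implication is the same as the paper's (the only case needing $\cV$-compatibility is $\sigma\in\dom\cV\backslash\Fix\cV$, where $\Pi_\cV(\sigma)=\{\cV(\sigma)\}$ points upward). For the converse the paper takes a slightly different route: given $\alpha\in\Cl\sigma$ it splits on whether $\sigma\rightconnects{\cV}\alpha$, using Lemma~\ref{lem:IfFaceConnect} to conclude that the only unreachable face is $\cV^{-1}(\sigma)=\sigma^-$ (which compatibility then supplies), and relying on forward-invariance of $L$ under iterated application of $\Pi_\cV$ to handle the reachable ones. You instead avoid the reachability relation entirely: you first push $\sigma$ up to $\sigma^+\in L\cap\image\cV$ via compatibility, observe $\Cl\sigma\subseteq\Cl\sigma^+$, and then read $\Cl\sigma^+\subseteq L$ off a single application of $\Pi_\cV$ to $\sigma^+$, with compatibility supplying the one missing face $(\sigma^+)^-$ in the non-fixed case. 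Both arguments isolate the same single obstruction ($\sigma^-$ for $\sigma\in\image\cV\backslash\Fix\cV$); yours is more self-contained since it does not invoke Lemma~\ref{lem:IfFaceConnect} or multi-step solutions, at the cost of the small extra verification that $\sigma^+\in\image\cV$ always holds.
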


	\begin{proof}
		Assume $L$ is a subcomplex of $K$, meaning that $\Cl\sigma\subseteq L$ for all $\sigma\in L$. For all $\sigma\in L$, by definition of $\Pi_\cV$, we have either $\Pi_\cV(\sigma)\subseteq\Cl\sigma$, which is a subset of $L$ because it is a subcomplex, or $\Pi_\cV(\sigma)=\{\cV(\sigma)\}\subset L$ because $L$ is $\cV$-compatible.

		Now, suppose $\Pi_\cV(L)\subseteq L$. Let $\sigma\in L$ and consider $\alpha\in\Cl\sigma$. If $\sigma\rightconnects{\cV}\alpha$, it is easy to verify that $\alpha\in L$. Otherwise, if $\sigma\nrightconnects{\cV}\alpha$, we have by Lemma \ref{lem:IfFaceConnect} that $\sigma\in\image\cV\backslash\Fix\cV$ and $\alpha=\cV^{-1}(\sigma) = \sigma^-$. By the $\cV$-compatibility of $L$, it follows that $\alpha\in L$, so $L$ is a subcomplex of $K$.
	\end{proof}

	\begin{lem}\label{lem:AcyclicNoCriticalCollapse}
		Let $\Pi_\cV:K\multimap K$ be an acyclic flow and consider a $\cV$-compatible subcomplex $L\subseteq K$. If $\Fix\cV\subseteq L$, then $K\searrow L$.
	\end{lem}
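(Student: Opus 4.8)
The plan is to argue by induction on the number of simplices in $K\backslash L$, reducing everything to the existence of a single elementary collapse. If $K\backslash L=\emptyset$, then $K=L$ and there is nothing to prove. For the inductive step, first observe that since $L$ is $\cV$-compatible, so is $K\backslash L$, and since $\Fix\cV\subseteq L$, the set $K\backslash L$ contains no critical simplex; hence $K\backslash L$ decomposes into complete gradient pairs $(\alpha,\cV(\alpha))$. The goal is to exhibit one such pair $(\alpha,\beta)$, with $\beta=\cV(\alpha)$, in which $\alpha$ is a free face of $K$ whose unique cofacet is $\beta$; collapsing it will drop us to a strictly smaller instance.

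To produce the free face I would use a discrete Morse function. Since $\cV$ is acyclic, it is the gradient of a discrete Morse function $g\colon K\to\R$ \citep{Forman1998}, which we may take to be injective. Recall the defining inequalities: for the gradient pair $\alpha\to\cV(\alpha)$ one has $g(\alpha)>g(\cV(\alpha))$, whereas for every facet relation $\gamma\subset\delta$ that is not a gradient arrow one has $g(\gamma)<g(\delta)$. Now let $\alpha$ be a simplex of $K\backslash L$ on which $g$ attains its maximum over $K\backslash L$. As $\alpha$ is not critical it is either a head or a tail of $\cV$. It cannot be a head: if $\alpha=\cV(\alpha_0)$, then $\alpha_0\in K\backslash L$ by $\cV$-compatibility and $g(\alpha_0)>g(\alpha)$, contradicting maximality. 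Hence $\alpha$ is a tail, with $\beta:=\cV(\alpha)$ a cofacet of $\alpha$.

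It remains to see that $\beta$ is the only cofacet of $\alpha$. Suppose $\delta\supset\alpha$ were another cofacet. Since the gradient arrow out of $\alpha$ is $\alpha\to\beta$, the relation $\alpha\subset\delta$ is not a gradient arrow, so $g(\delta)>g(\alpha)$. Moreover $\delta\notin L$: if $\delta$ lay in the subcomplex $L$, then $\alpha\in\Cl\delta\subseteq L$, contradicting $\alpha\in K\backslash L$. Thus $\delta\in K\backslash L$ with $g(\delta)>g(\alpha)$, again contradicting maximality. Therefore $\alpha$ has the unique cofacet $\beta$, that is, $\alpha$ is a free face.

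Finally I would close the induction. A simplex with a unique cofacet $\beta$ forces $\beta$ to be a maximal simplex (any proper coface of $\beta$ would supply a second cofacet of $\alpha$), so $K_1:=K\backslash\{\alpha,\beta\}$ is genuinely a subcomplex and $K\searrow K_1$ is an elementary collapse. The restriction $\cV|_{K_1}$ is again an acyclic discrete vector field, since we have removed one entire gradient pair and thus disturbed neither the vector-field axioms nor any $\cV$-path; furthermore $L\subseteq K_1$ is still a $\cV|_{K_1}$-compatible subcomplex, and $\Fix(\cV|_{K_1})=\Fix\cV\subseteq L$. As $|K_1\backslash L|<|K\backslash L|$, the induction hypothesis gives $K_1\searrow L$, whence $K\searrow K_1\searrow L$. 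The only genuine difficulty is this middle step — locating a collapsible pair inside $K\backslash L$ — and the crucial point there is that, because $L$ is a subcomplex, a $g$-maximal cell of $K\backslash L$ cannot have a competing cofacet in $L$, which is exactly what forces the unique-cofacet (free-face) condition.
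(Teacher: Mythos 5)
Your proof is correct, and it shares the paper's overall skeleton (induction on the size of $K\backslash L$, locate a free-face pair inside $K\backslash L$, perform one elementary collapse, recurse), but the mechanism you use to find the free face is genuinely different. The paper works intrinsically with the flow: by acyclicity the relation $\leftconnects{\cV}$ induces a partial order $\leq_\cV$ on $K$, a $\leq_\cV$-maximal element $\sigma$ of $K\backslash L$ is shown to satisfy $\Pi_\cV^{-1}(\sigma)=\emptyset$ (using that $\Pi_\cV(L)\subseteq L$ for the subcomplex $L$, i.e.\ Lemma \ref{lem:SubsetNotConnectSubcomplex}), and then Lemma \ref{lem:IfFaceConnect} converts ``no incoming solution'' into ``unique cofacet.'' You instead invoke Forman's realization theorem to produce an injective discrete Morse function $g$ with gradient $\cV$ and take a $g$-maximal cell of $K\backslash L$; the inequalities $g(\cV(\alpha))<g(\alpha)$ for arrows and $g(\delta)>g(\alpha)$ for non-arrow cofacets, together with $L$ being closed under faces, then force the free-face condition. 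Both arguments are sound; yours is arguably more classical and transparent, but it imports Forman's theorem (plus the injectivity refinement) as a black box, whereas the paper's version stays entirely inside its combinatorial-dynamics toolkit and so does not depend on the existence of a realizing function --- a self-containment that matters for the paper's program of deriving function-level results (such as Proposition \ref{prop:RegularSimplicesCollapse} and Theorem \ref{theo:ExtendedMorseTheorem}) from flow-level ones. Two small points worth making explicit if you write this up: the strictness $g(\alpha_0)>g(\cV(\alpha_0))$ really does require the injectivity (or a genericity) assumption on $g$, and the fact that $\cV(\alpha)\notin\{\alpha_0\}$ for other tails (so that $\cV|_{K_1}$ remains a well-defined vector field) follows from injectivity of $\cV$ together with $\dom\cV\cap\image\cV=\Fix\cV$.
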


	\begin{proof}
		If $K\backslash L$ contains no fixed point of $\cV$, then $\card K\backslash L=2n$ for some $n\geq 0$. We prove the result by induction on $n$. If $n=0$, it is obvious since $L=K$.

		When $n\geq 1$, we first show that there exists a $\sigma\in K\backslash L$ such that $\Pi^{-1}_\cV(\sigma)=\emptyset$. Indeed, consider the preorder $\leq_\cV$ induced by the relation $\leftconnects{\cV}$ on $K$, i.e. $\sigma\leq_\cV\tau$ iff $\sigma=\tau$ or $\sigma\leftconnects{\cV}\tau$. By the acyclicity of $\Pi_\cV$, we see that $\leq_\cV$ is actually a partial order on $K$. Hence, because $K\backslash L$ is finite, we can choose $\sigma$ maximal in $K\backslash L$ relatively to $\leq_\cV$. In other words, we can choose $\sigma\in K\backslash L$ so that $\sigma\nleftconnects{\cV}\tau$ for all $\tau\in K\backslash L$ such that $\tau\neq\sigma$. Moreover, since $\Pi_\cV(L)\subseteq L$ by Lemma \ref{lem:SubsetNotConnectSubcomplex}, we see that $\tau\nrightconnects{\cV}\sigma$ also holds for $\tau\in L$, so $\tau\nrightconnects{\cV}\sigma$ for all $\tau\in K\backslash\{\sigma\}$. Finally, since $\sigma\notin\Fix\cV\subseteq L$ by hypothesis, we have $\sigma\nrightconnects{\cV}\sigma$, thus $\Pi_\cV^{-1}(\sigma)=\emptyset$.

		We now assume that $\sigma\in K\backslash L$ is such that $\Pi_\cV^{-1}(\sigma)=\emptyset$. We can easily check that $\sigma\in\dom\cV\backslash\Fix\cV$: if $\sigma$ were in $\image\cV$, we would have $\cV^{-1}(\sigma)\in\Pi^{-1}_\cV(\sigma)$, a contradiction. Also, $\sigma$ is a free face of $\cV(\sigma)\in K\backslash L$. Indeed, suppose $\sigma\subset\beta$ for some $\beta\in K$. We know that $\beta\nrightconnects{\cV}\sigma$ because $\Pi_\cV^{-1}(\sigma)=\emptyset$. Hence, from Lemma \ref{lem:IfFaceConnect}, we necessarily have $\cV(\sigma) = \beta$, so $\sigma$ is a free face of $\cV(\sigma)$, which is in $K\backslash L$ by the $\cV$-compatibility of $L$.

		Finally, we see that $K\searrow K\backslash\{\sigma,\cV(\sigma)\}$ and, using the induction hypothesis, we then have that $K\backslash\{\sigma,\cV(\sigma)\}\searrow L$, which concludes the proof.
	\end{proof}

	Before we present the main theorem of this section, recall from Proposition \ref{prop:MinimalMorseDecompositionMDM} that
	\begin{gather*}
		\cM = \big\lbrace\,\{\tau\}\subseteq K\ |\ \tau\text{ is critical for } f \big\rbrace
	\end{gather*}
	is a Morse decomposition of $\Pi_f$, so by considering a set $I$ of critical simplices of $f$, we can define the Morse set
	\begin{gather*}
		M(I) = \bigcup_{\tau,\tau'\in I}C(\tau',\tau),
	\end{gather*}
	for which $\Con(M(I))$ is defined from Proposition \ref{prop:MorseSetIsolatedInvariant}, where
	\begin{gather*}
		C(\tau',\tau) = \left\lbrace \sigma\in K\ \big|\ \tau'\rightconnects{f}\sigma\rightconnects{f}\tau\right\rbrace.
	\end{gather*}

	\begin{thm}\label{theo:ExtendedMorseTheorem}
		Let $f=(f_1,...,f_\maxdim):K\rightarrow\R^\maxdim$ be \mdm and consider $a,b\in\R^\maxdim$ such that $a\precneqq b$. Let
		\begin{gather*}
			I := \left\lbrace\sigma\in K\text{ critical}\ |\ f(\sigma)\in Q_a^b\right\rbrace.
		\end{gather*}
		Then,
		\begin{gather*}
			K(b) \simeq K(a)\bigcup_{\exit M(I)} M(I).
		\end{gather*}
	\end{thm}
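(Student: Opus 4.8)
The plan is to realize the passage from $K(a)$ to $K(b)$ as the attachment of the Morse set $M(I)$ along its exit set, by collapsing every simplex of $K(b)\setminus K(a)$ that does not lie on a connection between two critical cells of $I$. Throughout I would work with the gradient flow $\Pi_f=\Pi_\cV$ restricted to $K(b)$: since $K(b)$ is a $\cV$-compatible subcomplex, Lemma~\ref{lem:SubsetNotConnectSubcomplex} gives $\Pi_\cV(K(b))\subseteq K(b)$, so $\Pi_\cV$ restricts to a flow on $K(b)$, which is acyclic by Proposition~\ref{prop:AcyclicityGradientFlow}. The identification of $\cl$ and $\Ex$ on simplices lets me treat $\exit M(I)$ as the combinatorial exit set of the isolated invariant set $M(I)$.

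First I would pin down the combinatorics of $I$. Combining Lemma~\ref{lem:CofacetSmallerValue} with criticality (a critical $\sigma$ has $f(\beta)\succneqq f(\sigma)$ for every cofacet $\beta$), I would show that a critical simplex cannot enter a sublevel set through a coface; consequently the fixed points of $\cV$ lying in $K(b)$ split as the critical cells of $K(a)$ together with exactly $I$, and in particular $I\subseteq K(b)\setminus K(a)$. Using Corollary~\ref{coro:ConnectionCriticalImpliesGreaterImage} along connecting orbits I would then verify $M(I)\subseteq K(b)\setminus K(a)$, and, crucially, that every $\alpha\in\exit M(I)$ flows down into $K(a)$: one has $\alpha\nrightconnects{f}\tau$ for all $\tau\in I$ (otherwise $\alpha\in M(I)$, since $\sigma\rightconnects{f}\alpha$ for the $\sigma\in M(I)$ containing $\alpha$ by Lemma~\ref{lem:IfFaceConnect}), while, by the finest Morse decomposition of Proposition~\ref{prop:MinimalMorseDecompositionMDM}, the forward gradient orbit of $\alpha$ stays in $K(b)$ and terminates at a critical cell, necessarily one of $K(a)$. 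This is the combinatorial origin of the attaching map $\exit M(I)\to K(a)$.

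With this in hand I would perform the collapse. Let $N$ be the smallest $\cV$-compatible subcomplex of $K(b)$ containing $K(a)\cup M(I)$ (well defined, as $\cV$-compatible subcomplexes are closed under intersection and $K(b)$ is one such). By construction $N$ is $\cV$-compatible and contains every fixed point of $\cV$ in $K(b)$, namely the critical cells of $K(a)$ and all of $I$. Lemma~\ref{lem:AcyclicNoCriticalCollapse} applied to the restricted acyclic flow then yields $K(b)\searrow N$, hence $K(b)\simeq N$. It remains to identify $N$, up to homotopy, with the adjunction $K(a)\cup_{\exit M(I)}M(I)$: the cells of $N$ outside $K(a)\cup\cl M(I)$, together with the portion of $\exit M(I)$ not already in $K(a)$, are exactly the regular cells whose forward orbits were just shown to run into $K(a)$, and a second collapse along the flow pushes $\exit M(I)$ onto its image in $K(a)$ while leaving the open cells of $M(I)$ untouched, which is precisely the prescribed gluing.

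I expect this last identification to be the main obstacle. The difficulty is twofold. First, $\exit M(I)$ need not be contained in $K(a)$ — this already fails for a single critical simplex when $\maxdim>1$, as Figure~\ref{fig:NecessityExtendedMorseTheorem} shows — so the gluing must be performed along the flow-induced attaching map rather than a plain inclusion, and one must check that the induced map is the correct one up to homotopy. Second, the naive target $K(a)\cup\cl M(I)$ is in general \emph{not} $\cV$-compatible, so it cannot be fed directly into Lemma~\ref{lem:AcyclicNoCriticalCollapse}; the care lies in choosing the intermediate $\cV$-compatible subcomplexes and in verifying that the resulting collapses realize the attachment of $\cl M(I)/\exit M(I)$ at the level of spaces, so that the final complex is genuinely the adjunction $K(a)\cup_{\exit M(I)}M(I)$ and not merely a space sharing the homology $\Con(M(I))$.
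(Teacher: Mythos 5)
Your plan is essentially the paper's proof: the paper likewise shows that the critical cells of $K(b)\setminus K(a)$ are exactly $I$, uses Lemmas \ref{lem:SubsetNotConnectSubcomplex} and \ref{lem:AcyclicNoCriticalCollapse} to collapse $K(b)$ onto a $\cV$-compatible subcomplex containing $K(a)\cup M(I)$ (built explicitly as $K(a)\cup A\cup M(I)$ with $A=\{\sigma\in K(b)\setminus K(a)\ |\ \sigma\nrightconnects{f}I\}$ rather than as your minimal closure $N$), and then collapses $K(a)\cup A$ onto $K(a)$ before reading off the adjunction. The final identification you flag as delicate is treated in the paper at the same level of detail, via the two collapses together with the observation that $\Ex M(I)\subseteq K(a)\cup A$, so your proposal matches the intended argument.
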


	\begin{proof}
		Here is the idea of the proof. Consider the following subsets of $K(b)$:
		\begin{align*}
			A &= \left\lbrace \sigma\in K(b)\backslash K(a)\ |\ \sigma\nrightconnects{f} I\right\rbrace,\\
			B &= \left\lbrace \sigma\in K(b)\backslash M(I)\ |\ \sigma\rightconnects{f} I\right\rbrace.
		\end{align*}
		We will show that $A$, $M(I)$ and $B$ are mutually disjoint and, using Lemma \ref{lem:SubsetNotConnectSubcomplex}, that they form the following nested sequence of subcomplexes of $K(b)$:
		\begin{align*}
			K(b)&= K(a)\cup A\cup M(I) \cup B,\\
			&\supseteq K(a)\cup A\cup M(I),\\
			&\supseteq K(a)\cup A,\\
			&\supseteq K(a).
		\end{align*}
		Finally, after proving that $A$ and $B$ do not contain any critical points, it will follow from Lemma \ref{lem:AcyclicNoCriticalCollapse} that $K(b)\searrow K(a)\cup A\cup M(I)$ and $K(a)\cup A\searrow K(a)$.

		First, recall that $K(a)$, $K(b)$ and $M(I)$ are all $\cV$-compatible sets, where $\cV$ is the gradient field of $f$. Also, we could easily show that $\sigma^+\rightconnects{f}I\Leftrightarrow\sigma^-\rightconnects{f}I$. Using this argument, it would be straightforward to verify the $\cV$-compatibility of $A$ and $B$. Moreover, from the definitions of $A$, $B$ and $M(I)$, it is obvious that they are mutually disjoint sets.

		Now, we show that $K(b)\backslash K(a)= A\cup M(I)\cup B$. Verifying the inclusion from left to right is straightforward: if $\sigma\in K(b)\backslash K(a)$ is not in $M(I)$, then $\sigma\in A$ if $\sigma\nrightconnects{f}I$ and $\sigma\in B$ if $\sigma\rightconnects{f}I$. We now prove $A\cup M(I)\cup B \subseteq K(b)\backslash K(a)$.
		\begin{itemize}
			\item By definition of $A$, we have $A\subseteq K(b)\backslash K(a)$.

			\item If $\sigma\in M(I)$, by definition of a Morse set, there exists critical simplices $\tau,\tau'\in I$ such that $\tau'\rightconnects{f}\sigma\rightconnects{f}\tau$. By definition of $Q_a^b$, we thus have $f(\tau')\preceq b$ and $f_i(\tau)>a_i$ for some $i=1,...,\maxdim$. By Corollary \ref{coro:ConnectionCriticalImpliesGreaterImage}, it follows that $b\succeq f(\tau')\succeq f(\sigma)$, so $\sigma\in K(b)$. Moreover, we also have by Corollary \ref{coro:ConnectionCriticalImpliesGreaterImage} that $f(\sigma)\succeq f(\tau)$, so $f_i(\sigma)\geq f_i(\tau) > a_i$ for some $i=1,...,\maxdim$. In addition, for all coface $\beta\supset\sigma$, we know from Lemma \ref{lem:IfConnectsCoFaceConnects}\ref{lem:IfConnectsCoFaceConnectsEnum1} that $\beta\rightconnects{f}\tau$, so $f_i(\beta)>a_i$ by a similar reasoning. Hence, $\sigma\notin K(a)$.

			\item If $\sigma\in B$, by definition of $B$, we have $\sigma\in K(b)$. Also, there exists a critical $\tau\in I$ such that $\sigma\rightconnects{f}\tau$. Hence, from Corollary \ref{coro:ConnectionCriticalImpliesGreaterImage}, we see that $f(\sigma)\succneqq f(\tau)$, thus $f_i(\sigma)\geq f_i(\tau)>a_i$ for some $i=1,...,\maxdim$ because $f(\tau)\in Q_a^b$. From Lemma \ref{lem:IfConnectsCoFaceConnects}\ref{lem:IfConnectsCoFaceConnectsEnum1}, we also have $\beta\rightconnects{f}\tau$ for all coface $\beta\supset\sigma$, hence $f_i(\beta)>a_i$. We conclude that $\sigma\notin K(a)$.
		\end{itemize}
		Consequently, $K(b)$ is the union $K(a)\cup A\cup M(I)\cup B$ of mutually disjoint $\cV$-compatible sets.

		Next, we prove that $K(a)\cup A$ and $K(a)\cup A\cup M(I)$ are subcomplexes of $K(b)$ using Lemma \ref{lem:SubsetNotConnectSubcomplex}. To do so, we have to show that $\Pi_f(K(a)\cup A)\subseteq K(a)\cup A$ and $\Pi_f(K(a)\cup A\cup M(I))\subseteq K(a)\cup A\cup M(I)$. Since $K(a)$ is a subcomplex of $K(b)$, we already know that $\Pi_f(K(a))\subseteq K(a)$ by Lemma \ref{lem:SubsetNotConnectSubcomplex}. Thus, it suffices to show that $\Pi_f(A)\subseteq K(a)\cup A$ and $\Pi_f(M(I))\subseteq K(a)\cup A\cup M(I)$.
		\begin{itemize}
			\item Let $\sigma\in A$ and $\gamma\in\Pi_f(\sigma)$ and suppose $\gamma\in M(I)\cup B$. By definition of $M(I)$ and $B$, we have $\gamma\rightconnects{f}\tau$ for some $\tau\in I$, hence $\sigma\rightconnects{f}\gamma\rightconnects{f}\tau$, which is a contradiction since $\sigma\nrightconnects{f}I$ by definition of $A$. Thus, $\gamma\notin M(I)\cup B$, meaning that $\gamma\in K(a)\cup A$.

			\item Let $\sigma\in M(I)$ and $\gamma\in\Pi_f(\sigma)$. By definition of $M(I)$, there exists a $\tau\in I$ such that $\tau\rightconnects{f}\sigma\rightconnects{f}\gamma$. If there exists a $\tau'\in I$ such that $\gamma\rightconnects{f}\tau'$, then $\gamma\in M(I)$. Otherwise, if $\gamma\nrightconnects{f}I$, then $\gamma\notin B$, so $\gamma\in K(a)\cup A\cup M(I)$. Either way, we have the desired result.
		\end{itemize}

		We now show that all critical points of $f$ in $K(b)\backslash K(a)$ are necessarily in $I$. To do so, we verify that $f(\sigma)\in Q_a^b$ for all critical $\sigma\in K(b)\backslash K(a)$. Since $\sigma\notin K(a)$, we obviously have $f_i(\sigma)>a_i$ for some $i=1,...,\maxdim$. Also, $\sigma\in K(b)$ implies that either $f(\sigma)\preceq b$ or $f(\gamma)\preceq b$ for some coface $\gamma\supset\sigma$. When $f(\sigma)\preceq b$, we immediately have $f(\sigma)\in Q_a^b$. When $f(\gamma)\preceq b$ for some coface $\gamma\supset\sigma$, we see from Lemma \ref{lem:CofacetSmallerValue} that there exists a cofacet $\beta$ of $\sigma$ such that $f(\beta)\preceq f(\gamma)\preceq b$. It follows from the definition of a critical point that $f(\sigma)\precneqq f(\beta)\preceq b$, so $f(\sigma)\in Q_a^b$.

		We deduce that the sets $A$ and $B$ do not contain any critical points of $f$. Also, recall from Proposition \ref{prop:AcyclicityGradientFlow} that a gradient flow is always acyclic. Hence, from Lemma \ref{lem:AcyclicNoCriticalCollapse}, it follows that $K(b)= K(a)\cup A\cup M(I)\cup B\searrow K(a)\cup A\cup M(I)$ and $K(a)\cup A\searrow K(a)$. By endowing $K$ with the topology of a CW-complex, we conclude that
		\begin{gather*}
			K(b)\simeq K(a)\cup A\cup M(I) \simeq K(a)\cup M(I)
		\end{gather*}
		where $M(I)$ is attached to $K(a)$ along its exit set $\exit M(I)$.
	\end{proof}

	We want to emphasize the fact that Theorem \ref{theo:ExtendedMorseTheorem} is a generalization of both Proposition \ref{prop:RegularSimplicesCollapse} and Proposition \ref{prop:CriticalSimplexHomotopyChange}. Indeed, when $I=\emptyset$, then $M(I)=\emptyset$ and we see from the proof that $K(b)\searrow K(a)$, which proves Proposition \ref{prop:RegularSimplicesCollapse}. Then, when $I=\{\sigma^{(p)}\}$, we have $M(I)=\{\sigma^{(p)}\}$, which is homeomorphic to a cell of dimension $p$, and $\exit M(I) = \cl\sigma\backslash\sigma=\sbdy\sigma$, so Proposition \ref{prop:CriticalSimplexHomotopyChange} follows.

	\subsection{Morse set examples}

	We conclude this section with a few examples to illustrate Theorem \ref{theo:ExtendedMorseTheorem}.

	\begin{ex}\label{ex:MorseSet1}
		Let $K$ and $f:K\rightarrow\R^2$ be as defined in Figure \ref{fig:MorseSetEx1}. We could show that $f$ is \mdm and that the red simplices are critical.

		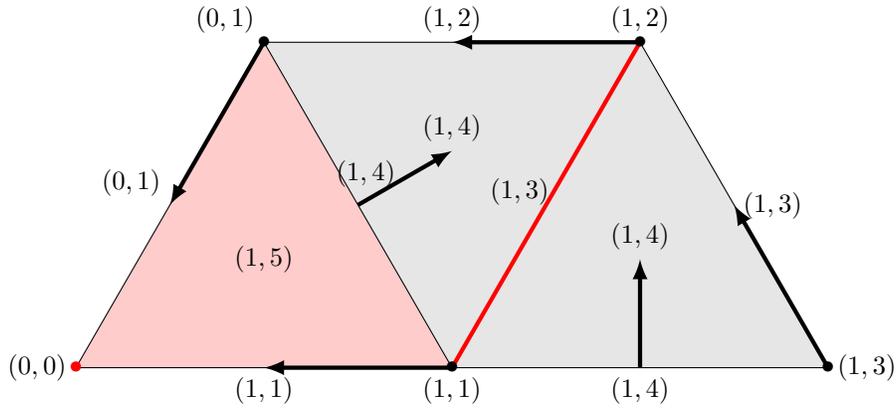
\begin{figure}[h]
			\centering
			\begin{tikzpicture}[scale=5]
				\fill[red!20] (0,0) -- (1,0) -- (0.5,0.866) -- cycle;
				\fill[black!10] (1,0) -- (2,0) -- (1.5,0.866) -- (0.5,0.866) -- cycle;
				\draw[red,ultra thick] (1,0) -- (1.5,0.866);
				\draw (0.5,0.866) -- (0,0) -- (1,0)node{$\bullet$} -- (2,0)node{$\bullet$} -- (1.5,0.866)node{$\bullet$} -- (0.5,0.866)node{$\bullet$} -- (1,0);
				\draw[red] (0,0)node{$\bullet$};
				\draw[ultra thick, -latex] (1,0) -- (0.5,0);
				\draw[ultra thick, -latex] (0.5,0.866) -- (0.25,0.433);
				\draw[ultra thick, -latex] (0.75,0.433) -- (1,0.577);
				\draw[ultra thick, -latex] (1.5,0.866) -- (1,0.866);
				\draw[ultra thick, -latex] (1.5,0) -- (1.5,0.289);
				\draw[ultra thick, -latex] (2,0) -- (1.75,0.433);
				\draw (0,0)node[left]{$(0,0)$}
				(0.25,0.433)node[above left]{$(0,1)$}
				(0.5,0.866)node[above left]{$(0,1)$}
				(0.5,0)node[below]{$(1,1)$}
				(0.5,0.289)node{$(1,5)$}
				(0.75,0.433)node[xshift=3pt, yshift=12pt]{$(1,4)$}
				(1,0)node[below]{$(1,1)$}
				(1,0.577)node[above]{$(1,4)$}
				(1,0.866)node[above]{$(1,2)$}
				(1.25,0.433)node[xshift=-10pt, yshift=5pt]{$(1,3)$}
				(1.5,0)node[below]{$(1,4)$}
				(1.5,0.289)node[above]{$(1,4)$}
				(1.5,0.866)node[above]{$(1,2)$}
				(1.75,0.433)node[right]{$(1,3)$}
				(2,0)node[right]{$(1,3)$};
			\end{tikzpicture}
			\caption{A \mdm function, with critical simplices in red}\label{fig:MorseSetEx1}
		\end{figure}

		For $a = (0,5)$ and $b= (1,5)$, we see in Figure \ref{fig:MorseSetEx1Pieces}, as defined in the proof of Theorem \ref{theo:ExtendedMorseTheorem}, the sets $K(a)$, $A$, $M(I)$ and $B$. Here, notice that $\Con(M(I))$ is trivial and $K(b)\searrow K(a)$.

		\begin{figure}[h]
			\centering
			\begin{tikzpicture}[scale=5]
				\fill[green!20] (1,0) -- (1.5,0.866) -- (2,0) -- cycle;
				\draw[ultra thick, green] (1,0) -- (2,0);
				\fill[orange!20] (0,0) -- (1,0) -- (1.5,0.866) -- (0.5,0.866) -- cycle;
				\draw[ultra thick, orange] (0.5,0.866) -- (1,0) -- (1.5,0.866);
				\draw[ultra thick, blue] (0,0) -- (1,0)node{$\bullet$}
				(0.5,0.866) -- (1.5,0.866)node{$\bullet$} -- (2,0)node{$\bullet$};
				\draw[ultra thick] (0,0)node{$\bullet$} -- (0.5,0.866)node{$\bullet$};
			\end{tikzpicture}
			\caption{The sets $K(a)$ (black), $A$ (blue), $M(I)$ (orange) and $B$ (green), as defined in the proof of Theorem \ref{theo:ExtendedMorseTheorem}, of the \mdm function in Figure \ref{fig:MorseSetEx1} for $a = (0,5)$ and $b= (1,5)$}\label{fig:MorseSetEx1Pieces}
		\end{figure}
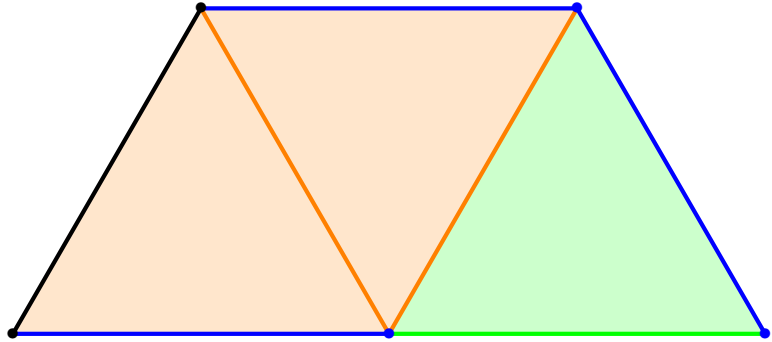
	\end{ex}

	\begin{ex}
		Consider the ($1$-dimensional) simplicial complex $K$ and the \mdm function $f:K\rightarrow\R^2$ defined in Figure \ref{fig:MorseSetEx2a}.

		\begin{figure}[h]
			\centering
			\begin{subfigure}{0.5\linewidth}
				\centering
				\begin{tikzpicture}[scale=1.5]
					\draw[ultra thick] (-1,0)node{$\bullet$} -- (0,-1)node{$\bullet$} -- (1,1)node{$\bullet$} -- cycle;
					\draw (-1,0)node[left]{$(-1,0)$}
					(0,-1)node[below]{$(0,-1)$}
					(1,1)node[above right]{$(1,1)$}
					(-0.5,-0.5)node[below left]{$(0,0)$}
					(0,0.5)node[above left]{$(1,2)$}
					(0.5,0)node[below right]{$(2,1)$};
				\end{tikzpicture}
				\caption{}\label{fig:MorseSetEx2a}
			\end{subfigure}

			\begin{subfigure}{0.3\linewidth}
				\centering
				\begin{tikzpicture}[scale=1.5]
					\draw[dashed] (-1,0)node{$\bullet$} -- (0,-1)node{$\bullet$} -- (1,1)node{$\bullet$} -- cycle;
					\draw[ultra thick, orange] (0,-1) -- (1,1)node{$\bullet$};
					\draw[ultra thick] (-1,0)node{$\bullet$} -- (0,-1)node{$\bullet$};
				\end{tikzpicture}
				\caption{}\label{fig:MorseSetEx2b}
			\end{subfigure}
			\begin{subfigure}{0.3\linewidth}
				\centering
				\begin{tikzpicture}[scale=1.5]
					\draw[orange ,ultra thick] (1,1) -- (-1,0);
					\draw[ultra thick] (-1,0)node{$\bullet$} -- (0,-1)node{$\bullet$} -- (1,1)node{$\bullet$};
				\end{tikzpicture}
				\caption{}\label{fig:MorseSetEx2c}
			\end{subfigure}
			\caption{In \subref{fig:MorseSetEx2a}, a \mdm function having only critical simplices. In \subref{fig:MorseSetEx2b} and \subref{fig:MorseSetEx2c}, $K(a)$ (black) and $M(I)$ (orange) are represented, in \subref{fig:MorseSetEx2b}, for parameters $a = (2,0)$ and $b=(2,1)$ as well as, in \subref{fig:MorseSetEx2c}, for $a = (2,1)$ and $b=(2,2)$}\label{fig:MorseSetEx2}
		\end{figure}
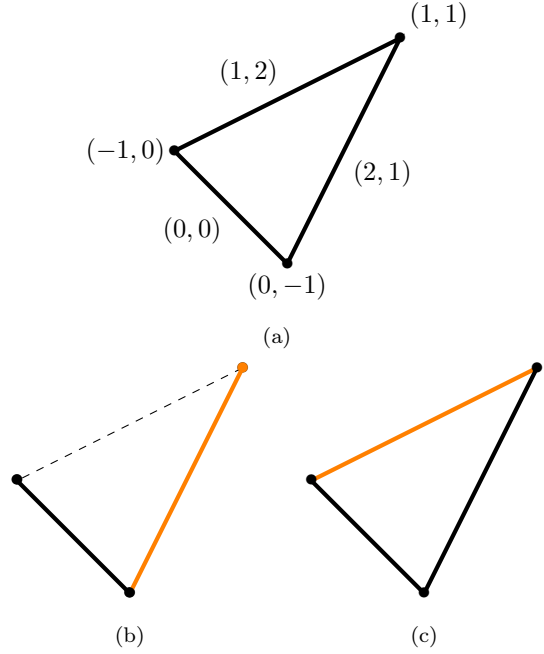

		\begin{itemize}
			\item If $a = (2,0)$ and $b=(2,1)$, we have $K(b) = K(a)\cup M(I)$ as in Figure \ref{fig:MorseSetEx2b}. More precisely, $I = M(I) = \{f^{-1}(1,1), f^{-1}(2,1)\}$. We could show that $\Con(M(I))$ is trivial and, although $K(b)$ is obtained by adding two critical simplices to $K(a)$, we have $K(b)\searrow K(a)$.

			\pagebreak

			\item If $a = (2,1)$ and $b=(2,2)$, we have $K(b) = K = K(a)\cup M(I)$ as in Figure \ref{fig:MorseSetEx2c}. Here, $I = M(I) = \{f^{-1}(1,2)\}$. Since a unique critical simplex of dimension $1$ is added to $K(a)$ to obtain $K(b)$, we see from either Proposition \ref{prop:CriticalSimplexHomotopyChange} or Theorem \ref{theo:ExtendedMorseTheorem} that $K(b)\simeq K(a)\cup e^1$, where $e^1$ is a cell of dimension $1$.
		\end{itemize}
	\end{ex}

	\begin{ex}
		Finally, consider $K$ and $f:K\rightarrow\R^2$ in Figure \ref{fig:MorseSetEx3a}, where the red simplices are critical.

		\begin{figure}[h]
			\centering
			\begin{subfigure}{0.5\linewidth}
				\centering
				\begin{tikzpicture}[scale=1.5]
					\draw[thick] (1,2) -- (0,1);
					\draw[red,ultra thick] (1,2) -- (2,0)node{$\bullet$} -- (0,1)node{$\bullet$};
					\draw[ultra thick, -latex] (1,2)node{$\bullet$} -- (0.5,1.5);
					\draw (0,1)node[left]{$(0,1)$}
					(1,2)node[above]{$(1,2)$}
					(2,0)node[right]{$(2,0)$}
					(0.5,1.5)node[above left]{$(1,2)$}
					(1.5,1)node[above right]{$(2,2)$}
					(1,0.5)node[below left]{$(2,1)$};
				\end{tikzpicture}
				\caption{}\label{fig:MorseSetEx3a}
			\end{subfigure}

			\begin{subfigure}{0.3\linewidth}
				\centering
				\begin{tikzpicture}[scale=1.5]
					\draw[orange, ultra thick] (1,2) -- 	(2,0);
					\draw[blue, ultra thick] (1,2)node{$\bullet$} -- (0,1);
					\draw[ultra thick] (2,0)node{$\bullet$} -- (0,1)node{$\bullet$};
				\end{tikzpicture}
				\caption{}\label{fig:MorseSetEx3b}
			\end{subfigure}
			\begin{subfigure}{0.3\linewidth}
				\centering
				\begin{tikzpicture}[scale=1.5]
					\draw[orange, ultra thick] (1,2) -- 	(2,0)node{$\bullet$} -- (0,1);
					\draw[ultra thick] (1,2)node{$\bullet$} -- (0,1)node{$\bullet$};
				\end{tikzpicture}
				\caption{}\label{fig:MorseSetEx3c}
			\end{subfigure}
			\caption{In \subref{fig:MorseSetEx3a}, a \mdm function, with critical simplices in red. In \subref{fig:MorseSetEx3b} and \subref{fig:MorseSetEx3c}, the sets $K(a)$ (black), $A$ (blue) and $M(I)$ (orange) as defined in the proof of Theorem \ref{theo:ExtendedMorseTheorem} are represented, in \subref{fig:MorseSetEx3b}, for parameters $a=(2,1)$ and $b=(2,2)$ and, in \subref{fig:MorseSetEx3c}, for $a=(1,2)$ and $b=(2,2)$}\label{fig:MorseSetEx3}
		\end{figure}
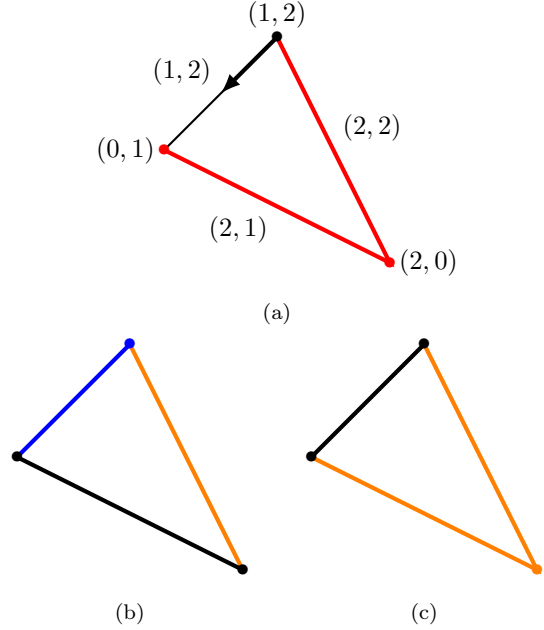

		\begin{itemize}
			\item Let $a=(2,1)$ and $b=(2,2)$. We have $K(b) = K(a)\cup A\cup M(I)$ as in Figure \ref{fig:MorseSetEx3b} where $M(I)$ contains only one critical simplex.

			\item Now, consider $a=(1,2)$ and $b=(2,2)$. We then have $K(b)=K(a)\cup M(I)$ as in Figure \ref{fig:MorseSetEx3c}. Here, we see that $M(I)$ contains three critical simplices.
		\end{itemize}
		In both examples from Figures \ref{fig:MorseSetEx3b} and \ref{fig:MorseSetEx3c}, even though the sets $M(I)$ do not have the same number of critical simplices, we see that the homology change from $K(a)$ to $K(b)$ is the same because $K(b)\simeq K(a)\cup e^1$ in both cases.
	\end{ex}

	From the previous examples, we see that critical simplices can, more or less, interact with each other. In particular, notice that when $I=\{\sigma^{(p)},\tau^{(p+1)}\}$ and there exists a unique path in the gradient field going from $\tau$ to $\sigma$, then $\Con(M(I))$ is trivial and $K(b)\searrow K(a)$. This is a consequence of a well-known result of \citet{Forman1998} which states that, under these conditions, we can find another acyclic field in which $\sigma$ and $\tau$ are not critical by reversing the arrows along the unique path going from $\tau$ to $\sigma$. Informally, we could say that the two critical simplices cancel each other out.

	\section{Critical components of a multidimensional discrete Morse function}\label{sec:CriticalComponents}

	In the original Morse theory, which studies smooth real-valued Morse functions defined on smooth manifolds, it is well known that critical points are isolated. However, when extending the theory either by considering multiple Morse functions \citep{Edelsbrunner2004, Smale1975, Wan1975} or a vector-valued function \citep{Budney2021}, we see that the set of critical points obtained is separated in connected components. Experimentally, it seems that critical points of a \mdm function form clusters much like the critical components that appear in the smooth setting \citep{Allili2019}.

	In this section, we propose a way to partition the critical simplices of a \mdm function as introduced in Section~\ref{sec:MDM} in order to define such critical components in the discrete setting. Moreover, we establish acyclicity conditions under which this partition induces a Morse decomposition introduced in Section~\ref{sec:CombinatorialDynamics} and studied further in Section~\ref{sec:flow-MDM}. That enables us to extend results on the Poincar\'e polynomial and Morse inequalities to our acyclic \mdm function setting in Theorem~\ref{theo:MorseInequalitiesCritComponents}. This result complements the theoretical goals of \citet{Allili2019} which provided the main motivation for our work.

	\subsection{Partitioning the critical points}

	Let $f:K\rightarrow\R^\maxdim$ be \mdm and consider $\cC$, the set of critical points of $f$. Since, in the smooth setting, the critical components of either multiple Morse functions or a vector-valued function are the connected components formed by the critical points, we could simply define the critical components of $f$ as the connected components of $\cC$. However, it seems that the topological connectedness is neither a necessary nor a sufficient criteria to define the critical components of $f$. Indeed, we can see in \citep{Allili2019} that, in practice, the clusters formed by critical simplices are not necessarily connected. Also, Example \ref{ex:ParetoCircle} shows that $\cC$ may be connected even though we could expect $f$ to have multiple critical components.

	\begin{ex}\label{ex:ParetoCircle}
		Consider the inclusion map $\iota:S^1\hookrightarrow\R^2$ and the \mdm function $f:K\rightarrow\R^2$ defined as in Figure \ref{fig:ParetoCircleDiscrete}. We see that $f$ is a discretization of $\iota$, in the sense that every vertex $v\in K$ is a point of $S^1$ and $f(v)=\iota(v)$. Moreover, we could show that the set of points in $S^1$ which are critical for $\iota$ is the one in red in Figure \ref{fig:ParetoCircleSmooth}. This is the \emph{Pareto set} of $\iota$, which we will discuss below. Here, the Pareto set clearly has two connected components. However, the critical points of $f$ are all connected.
		\begin{figure}[h]
			\centering
			\begin{subfigure}{0.48\textwidth}
				\centering
				\begin{tikzpicture}[scale=1.75]
					\draw[help lines, color=gray!50, dashed, step=0.5] (-1.2,-1.2) grid (1.2,1.2);
					\draw[->] (-1.2,0)--(1.2,0) node[right]{$x$};
					\draw[->] (0,-1.2)--(0,1.2) node[above]{$y$};
					\draw[thick] (0,0) circle (1);
					\draw[ultra thick, red] (180:1) arc (180:270:1);
					\draw[ultra thick, red] (0:1) arc (0:90:1);
				\end{tikzpicture}
				\caption{}\label{fig:ParetoCircleSmooth}
			\end{subfigure}
			\begin{subfigure}{0.48\textwidth}
				\centering
				\begin{tikzpicture}[scale=1.75]
					\draw[help lines, color=gray!50, dashed, step=0.5] (-1.2,-1.2) grid (1.2,1.2);
					\draw[->] (-1.2,0)--(1.2,0) node[right]{$x$};
					\draw[->] (0,-1.2)--(0,1.2) node[above]{$y$};
					\draw[ultra thick,red] (0,-1) -- ({cos(45)},{sin(45)});
					\draw[thick] ({cos(45)},{sin(45)})node{$\bullet$} -- (-1,0);
					\draw[ultra thick,red] (-1,0)node{$\bullet$} -- (0,-1)node{$\bullet$};
					\draw[ultra thick, -stealth] (45:1)node{$\bullet$} -- ({(cos(45)+cos(180))/2},{(sin(45)+sin(180))/2});
					\draw (-1,0)node[above left]{$(-1,0)$}
					(0,-1)node[below right]{$(0,-1)$}
					({cos(45)},{sin(45)})node[above right]{$\left(\frac{\sqrt{2}}{2},\frac{\sqrt{2}}{2}\right)$}
					(-0.5,-0.5)node[below left]{$(0,0)$}
					({(cos(45)+cos(270))/2},{(sin(45)+sin(270))/2})node[below right]{$\left(1,\frac{\sqrt{2}}{2}\right)$}
					({(cos(45)+cos(180))/2},{(sin(45)+sin(180))/2})node[above left]{$\left(\frac{\sqrt{2}}{2},\frac{\sqrt{2}}{2}\right)$};
				\end{tikzpicture}
				\caption{}
				\label{fig:ParetoCircleDiscrete}
			\end{subfigure}
			\caption{In \subref{fig:ParetoCircleSmooth}, the critical points of $\iota:S^1\rightarrow\R^2$ are shown in red. In \subref{fig:ParetoCircleDiscrete}, a \mdm function discretizing $\iota$}\label{fig:ParetoCircle}
		\end{figure}
	\end{ex}

	We consider another approach to partition $\cC$. We still want the critical simplices in a same component to be connected in some way, but it should not necessarily be topologically. Hence, we consider the dynamical connections between the critical simplices, meaning that we consider $\sigma,\tau\in\cC$ to be neighbors if either $\sigma\rightconnects{f}\tau$ or $\sigma\leftconnects{f}\tau$.

	Moreover, in \citep{Allili2019}, where \mdm functions were first introduced, an algorithm to generate gradient fields was developed and it was noticed that the sets of critical simplices found computationally resembled Pareto sets, which were studied in the setting of smooth Morse theory \citep{Smale1975,Wan1975}. Since the concepts of Pareto sets and optima are widely used in the literature, many different variants exist: here, for a smooth vector-valued function $g:M\rightarrow\R^\maxdim$, we see a local Pareto optimum as a point $x\in M$ such that for all $y$ in a small enough neighborhood of $x$, we have $g_i(y) \leq g_i(x)$ for at least one $i=1,...,\maxdim$.

	Following this idea, for a \mdm function $f:K\rightarrow\R^\maxdim$, we could require that for two neighboring (i.e. dynamically connected) critical simplices $\sigma,\tau\in\cC$ to belong in the same component, they should be such that $f_i(\sigma)\leq f_i(\tau)$ and $f_j(\sigma)\geq f_j(\tau)$ for some $i,j=1,...,\maxdim$. Since, from Corollary \ref{coro:ConnectionCriticalImpliesGreaterImage}, $\sigma\rightconnects{f}\tau$ implies that $f(\sigma)\succneqq f(\tau)$, it follows that $f_j(\sigma)\geq f_j(\tau)$ is trivial and $f_i(\sigma)\leq f_i(\tau)$ implies $f_i(\sigma) = f_i(\tau)$. Similarly, $\sigma\leftconnects{f}\tau$ implies that $f(\sigma)\precneqq f(\tau)$, so in this case $f_i(\sigma)\leq f_i(\tau)$ is trivially true and $f_j(\sigma) = f_j(\tau)$. This leads to the following relation.

	\begin{prop}\label{prop:CriticalComponents}
		Let $f:K\rightarrow\R^\maxdim$ be \mdm and consider $\cC$, the set of critical points of $f$. Consider the relation $R$ defined on $\cC$ as follows:
		\begin{gather*}
			\sigma R \tau\Leftrightarrow f_i(\sigma) = f_i(\tau)\text{ for some }i = 1,...,\maxdim\text{ and either }\sigma\rightconnects{f}\tau \text{ or } \sigma\leftconnects{f}\tau.
		\end{gather*}
		The transitive closure of $R$ is an equivalence relation.
	\end{prop}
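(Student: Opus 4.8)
The plan is to verify the three defining properties of an equivalence relation—reflexivity, symmetry, and transitivity—for $\bar{R}$, the transitive closure of $R$ on $\cC$. Transitivity is immediate from the construction of the transitive closure: concatenating two witnessing sequences $\sigma = x_0,\dots,x_m = \tau$ and $\tau = x_m,\dots,x_{m+n} = \upsilon$ produces a witnessing sequence for $\sigma\,\bar{R}\,\upsilon$. So the real work reduces to reflexivity and symmetry, and for both I would first establish the property for $R$ itself and then lift it to $\bar{R}$ (noting that all chains stay in $\cC$, so $\bar{R}\subseteq\cC\times\cC$).

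For reflexivity, the key observation is that each $\sigma\in\cC$ is a fixed point of the gradient field $\cV$, so by Proposition~\ref{prop:GradientFlow} we have $\Pi_f(\sigma)=\Cl\sigma\ni\sigma$. Hence the map $\varrho$ with $\varrho(0)=\varrho(1)=\sigma$ is a nontrivial solution of $\Pi_f$, which gives $\sigma\rightconnects{f}\sigma$ (equivalently, this is exactly the statement $\cR=\Fix\cV$ for the acyclic gradient flow). Since $f_i(\sigma)=f_i(\sigma)$ holds trivially, we obtain $\sigma R\sigma$, so $R$—and therefore $\bar{R}\supseteq R$—is reflexive. I would flag this as the point most easily overlooked and the main subtlety of the argument: although $\Pi_f$ is acyclic, acyclicity only forbids nontrivial cycles between \emph{distinct} simplices, so the self-loop at a critical simplex is legitimate and is precisely what supplies reflexivity here.

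For symmetry, I would exploit the duality built into the notation: by definition $\sigma\rightconnects{f}\tau$ is the same assertion as $\tau\leftconnects{f}\sigma$, and $\sigma\leftconnects{f}\tau$ the same as $\tau\rightconnects{f}\sigma$. Thus the disjunction ``$\sigma\rightconnects{f}\tau$ or $\sigma\leftconnects{f}\tau$'' appearing in $\sigma R\tau$ is logically identical to ``$\tau\rightconnects{f}\sigma$ or $\tau\leftconnects{f}\sigma$''; combined with the evident symmetry of the condition $f_i(\sigma)=f_i(\tau)$, this yields $\sigma R\tau\Leftrightarrow\tau R\sigma$, so $R$ is symmetric. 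I would then transfer symmetry to $\bar{R}$: given a witnessing chain $\sigma=x_0,x_1,\dots,x_n=\tau$ for $\sigma\,\bar{R}\,\tau$, symmetry of $R$ makes the reversed chain $\tau=x_n,\dots,x_0=\sigma$ a valid witness for $\tau\,\bar{R}\,\sigma$. With reflexivity, symmetry, and transitivity all established, $\bar{R}$ is an equivalence relation. No step requires heavy computation; the only genuinely delicate ingredient is the correct reading of acyclicity that produces reflexivity.
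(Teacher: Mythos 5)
Your proposal is correct and follows essentially the same route as the paper, whose proof simply asserts that $R$ is reflexive and symmetric and that the equivalence-relation property of the transitive closure then follows; you supply exactly the details the paper leaves implicit (the self-loop $\varrho(0)=\varrho(1)=\sigma$ at a critical simplex for reflexivity, the built-in symmetry of the disjunction, and the lifting of both properties to $\bar{R}$).
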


	\begin{proof}
		We easily see that $R$ is both reflexive and symmetric, which makes it straightforward to verify that its transitive closure is an equivalence relation.
	\end{proof}
	\pagebreak
	\begin{defn}[Critical components]\label{def:CriticalComponents}
		A \emph{critical component} of a \mdm function $f:K\rightarrow\R^\maxdim$ is a class of the equivalence relation on $\cC$ defined in Proposition \ref{prop:CriticalComponents}, which we note $\sim$. We use the standards notations regarding equivalence relations, meaning that the partition of $\cC$ in critical components is $\ccomponents$ and the critical component in which some $\sigma\in\cC$ belongs is $[\sigma]\in\ccomponents$.
	\end{defn}

	Here are a few interesting consequences of Definition \ref{def:CriticalComponents}. First, for $\maxdim=1$, so when $f:K\rightarrow\R$ is discrete Morse, $\sigma \sim\tau$ if and only if $\sigma = \tau$. Indeed, when $\sigma,\tau\in\cC$ are such that $\sigma\neq\tau$ and $\sigma\rightconnects{f}\tau$, we know from Corollary \ref{coro:ConnectionCriticalImpliesGreaterImage} that $f(\sigma)>f(\tau)$, and we deduce that $\sigma\nsim\tau$. Hence, the critical components defined by $\sim$ are just the isolated critical points. This agrees with the original smooth and discrete Morse theories.

	Furthermore, using $\sim$, it is possible for $\ccomponents$ to have multiple components even if $\cC$ is connected, as shown in Example \ref{ex:CriticalComponents}\ref{ex:CriticalComponentsTriangle}. Moreover, it is possible to find critical components which are not necessarily connected, as in Example \ref{ex:CriticalComponents}\ref{ex:CriticalComponentsNotConnected}.

	\begin{ex}\label{ex:CriticalComponents}
		Each \mdm function below has two distinct critical components, which are colored in red and orange.
		\begin{enumerate}[label={(\alph{enumi})}]
			\item\label{ex:CriticalComponentsTriangle} In Figure \ref{fig:CriticalComponentsTriangle}, we see the components of the \mdm function considered in Example \ref{ex:ParetoCircle}.
			\begin{figure}[h]
				\centering
				\begin{subfigure}{0.48\textwidth}
					\centering
					\begin{tikzpicture}[scale=1.75]
						\draw[help lines, color=gray!50, dashed, step=0.5] (-1.2,-1.2) grid (1.2,1.2);
						\draw[->] (-1.2,0)--(1.2,0) node[right]{$x$};
						\draw[->] (0,-1.2)--(0,1.2) node[above]{$y$};
						\draw[thick] (0,0) circle (1);
						\draw[ultra thick, red] (180:1) arc (180:270:1);
						\draw[ultra thick, orange] (0:1) arc (0:90:1);
					\end{tikzpicture}
					\caption{}
					\label{fig:CriticalComponentsTriangleSmooth}
				\end{subfigure}
				\begin{subfigure}{0.48\textwidth}
					\centering
					\begin{tikzpicture}[scale=1.75]
						\draw[help lines, color=gray!50, dashed, step=0.5] (-1.2,-1.2) grid (1.2,1.2);
						\draw[->] (-1.2,0)--(1.2,0) node[right]{$x$};
						\draw[->] (0,-1.2)--(0,1.2) node[above]{$y$};
						\draw[ultra thick,orange] (0,-1) -- ({cos(45)},{sin(45)});
						\draw[thick] ({cos(45)},{sin(45)})node{$\bullet$} -- (-1,0);
						\draw[ultra thick,red] (-1,0)node{$\bullet$} -- (0,-1)node{$\bullet$};
						\draw[ultra thick, -stealth] (45:1)node{$\bullet$} -- ({(cos(45)+cos(180))/2},{(sin(45)+sin(180))/2});
						\draw (-1,0)node[above left]{$(-1,0)$}
						(0,-1)node[below right]{$(0,-1)$}
						({cos(45)},{sin(45)})node[above right]{$\left(\frac{\sqrt{2}}{2},\frac{\sqrt{2}}{2}\right)$}
						(-0.5,-0.5)node[below left]{$(0,0)$}
						({(cos(45)+cos(270))/2},{(sin(45)+sin(270))/2})node[below right]{$\left(1,\frac{\sqrt{2}}{2}\right)$}
						({(cos(45)+cos(180))/2},{(sin(45)+sin(180))/2})node[above left]{$\left(\frac{\sqrt{2}}{2},\frac{\sqrt{2}}{2}\right)$};
					\end{tikzpicture}
					\caption{}
					\label{fig:CriticalComponentsTriangleDiscrete}
				\end{subfigure}
				\caption{In \subref{fig:CriticalComponentsTriangleSmooth}, the two critical components of $\iota:S^1\rightarrow\R^2$ are shown in red and orange. In \subref{fig:CriticalComponentsTriangleDiscrete}, the corresponding critical components of the \mdm function discretizing $\iota$ from Figure \ref{fig:ParetoCircleDiscrete}}\label{fig:CriticalComponentsTriangle}
			\end{figure}

			\item\label{ex:CriticalComponentsEnum2} In Figure \ref{fig:CriticalComponentsCircle}, we have a \mdm function $f:K\rightarrow\R^2$ such that for each vertex $v\in K$, $f(v)=\iota(v)$ where $\iota:S^1\hookrightarrow\R^2$ is the inclusion on the circle and, for each edge $\sigma=\{v_1,v_2\}$, $f(\sigma) = \left(\max\{f_1(v_1),f_1(v_2)\},\max\{ f_2(v_1),f_2(v_2)\}\right)$.
			\begin{figure}[h]
				\centering
				\begin{tikzpicture}[scale=1.75]
					\draw[help lines, color=gray!50, dashed, step=0.5] (-1.2,-1.2) grid (1.2,1.2);
					\draw[->] (-1.2,0)--(1.2,0) node[right]{$x$};
					\draw[->] (0,-1.2)--(0,1.2) node[above]{$y$};
					\draw[ultra thick, orange] (0:1) -- (30:1)node{$\bullet$} -- (60:1)node{$\bullet$} -- (90:1);
					\draw[thick] (90:1)node{$\bullet$} -- (120:1)node{$\bullet$} -- (150:1)node{$\bullet$} -- (180:1);
					\draw[thick] (270:1) -- (300:1)node{$\bullet$} -- (330:1)node{$\bullet$} -- (360:1)node{$\bullet$};
					\draw[ultra thick, red] (180:1)node{$\bullet$} -- (210:1)node{$\bullet$} -- (240:1)node{$\bullet$} -- (270:1)node{$\bullet$};
					\draw[ultra thick, -stealth] (90:1)node{$\bullet$} -- ({(cos(90)+cos(120))/2},{(sin(90)+sin(120))/2});
					\draw[ultra thick, -stealth] (120:1)node{$\bullet$} -- ({(cos(150)+cos(120))/2},{(sin(150)+sin(120))/2});
					\draw[ultra thick, -stealth] (150:1)node{$\bullet$} -- ({(cos(150)+cos(180))/2},{(sin(150)+sin(180))/2});
					\draw[ultra thick, -stealth] (0:1)node{$\bullet$} -- ({(cos(0)+cos(330))/2},{(sin(0)+sin(330))/2});
					\draw[ultra thick, -stealth] (330:1)node{$\bullet$} -- ({(cos(300)+cos(330))/2},{(sin(300)+sin(330))/2});
					\draw[ultra thick, -stealth] (300:1)node{$\bullet$} -- ({(cos(270)+cos(300))/2},{(sin(270)+sin(300))/2});
				\end{tikzpicture}
				\caption{The gradient field and critical components, in red and orange the \mdm function from Example \ref{ex:CriticalComponents}\ref{ex:CriticalComponentsEnum2}}\label{fig:CriticalComponentsCircle}
			\end{figure}
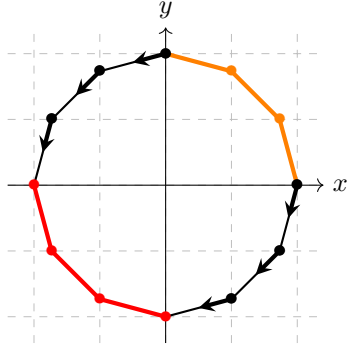

			\item\label{ex:CriticalComponentsNotConnected} We have in Figure \ref{fig:CriticalComponentsMorseSet} the same \mdm function that was considered in Example \ref{ex:MorseSet1}. Notice that the orange critical component is disconnected.

			\begin{figure}[h]
				\centering
				\begin{tikzpicture}[scale=5]
					\fill[orange!30] (0,0) -- (1,0) -- (0.5,0.866) -- cycle;
					\fill[black!10] (1,0) -- (2,0) -- (1.5,0.866) -- (0.5,0.866) -- cycle;
					\draw[orange,ultra thick] (1,0) -- (1.5,0.866);
					\draw (0.5,0.866) -- (0,0) -- (1,0)node{$\bullet$} -- (2,0)node{$\bullet$} -- (1.5,0.866)node{$\bullet$} -- (0.5,0.866)node{$\bullet$} -- (1,0);
					\draw[red] (0,0)node{$\bullet$};
					\draw[ultra thick, -latex] (1,0) -- (0.5,0);
					\draw[ultra thick, -latex] (0.5,0.866) -- (0.25,0.433);
					\draw[ultra thick, -latex] (0.75,0.433) -- (1,0.577);
					\draw[ultra thick, -latex] (1.5,0.866) -- (1,0.866);
					\draw[ultra thick, -latex] (1.5,0) -- (1.5,0.289);
					\draw[ultra thick, -latex] (2,0) -- (1.75,0.433);
					\draw (0,0)node[left]{$(0,0)$}
					(0.25,0.433)node[above left]{$(0,1)$}
					(0.5,0.866)node[above left]{$(0,1)$}
					(0.5,0)node[below]{$(1,1)$}
					(0.5,0.289)node{$(1,5)$}
					(0.75,0.433)node[xshift=3pt, yshift=12pt]{$(1,4)$}
					(1,0)node[below]{$(1,1)$}
					(1,0.577)node[above]{$(1,4)$}
					(1,0.866)node[above]{$(1,2)$}
					(1.25,0.433)node[xshift=-10pt, yshift=5pt]{$(1,3)$}
					(1.5,0)node[below]{$(1,4)$}
					(1.5,0.289)node[above]{$(1,4)$}
					(1.5,0.866)node[above]{$(1,2)$}
					(1.75,0.433)node[right]{$(1,3)$}
					(2,0)node[right]{$(1,3)$};
				\end{tikzpicture}
				\caption{Critical components, in red and orange, of the \mdm function from Example \ref{ex:MorseSet1}. Note that the red component contains only the critical vertex in the lower left part of the complex}\label{fig:CriticalComponentsMorseSet}
			\end{figure}
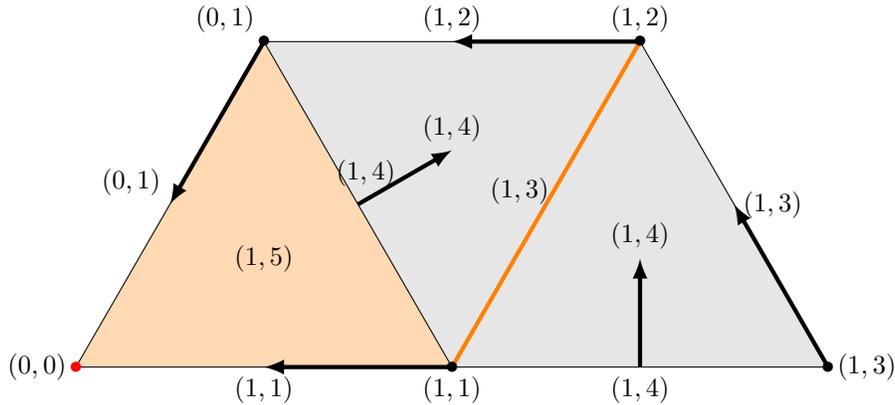

			\item The function in Figure \ref{fig:ComponentsSquare} is \mdm. Notice that there is a connection going from the red component to the orange one and vice versa.
			\begin{figure}[h]
				\centering
				\begin{tikzpicture}[scale=1.75]
					\draw[ultra thick, orange] (2,2) -- (0,2);
					\draw[ultra thick, red] (0,0) -- (2,0);
					\draw[thick] (2,0)node{$\bullet$} -- (2,2) (0,2)node{$\bullet$} -- (0,0);
					\node[orange] at (2,2) {$\bullet$};
					\node[red] at (0,0) {$\bullet$};
					\draw[ultra thick, -stealth] (0,2) -- (0,1);
					\draw[ultra thick, -stealth] (2,0) -- (2,1);
					\node[below left] at (0,0){$(0,2)$};
					\node[below] at (1,0){$(4,2)$};
					\node[below right] at (2,0){$(3,1)$};
					\node[right] at (2,1){$(3,1)$};
					\node[above right] at (2,2){$(2,0)$};
					\node[above] at (1,2){$(2,4)$};
					\node[above left] at (0,2){$(1,3)$};
					\node[left] at (0,1){$(1,3)$};
				\end{tikzpicture}
				\caption{A \mdm function with its critical components represented in red and orange}\label{fig:ComponentsSquare}
			\end{figure}
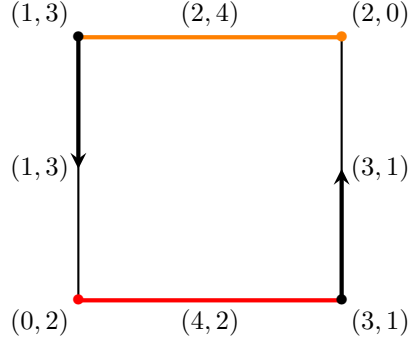
		\end{enumerate}
	\end{ex}

	\pagebreak
	\subsection{Acyclicity of a \mdm function}

	This last example shows that the suggested definition of critical components is not sufficient to induce a Morse decomposition for any \mdm function $f:K\rightarrow\R^\maxdim$. Indeed, if $\cC$ is the set of critical points of $f$, we know that $\cM=\{\{\sigma\}\in\cC\}$ is a Morse decomposition of $\Pi_f$ by Proposition \ref{prop:MinimalMorseDecompositionMDM} but, from Theorem~\ref{theo:ConditionsForMorseDecomposition}, $\cM'=\left\lbrace M([\sigma])\ |\ [\sigma]\in\ccomponents\right\rbrace$ is only a Morse decomposition when there exists no $\cM'$-cycle.

	Using Theorem~\ref{theo:ConditionsForMorseDecomposition}, we establish here some necessary and sufficient conditions for the critical components to induce a Morse decomposition. To do so, we first characterize a $\cM'$-cycle in terms of a \mdm function $f$. This characterization follows directly from Proposition \ref{prop:MpathsCharacterization}.
	\pagebreak
	\begin{prop}\label{prop:fPathsCharacterization}
		Let $f:K\rightarrow\R^\maxdim$ be \mdm, $\cC$ be the set of critical points of $f$. Consider the collection
		\begin{gather*}
			\cM=\left\lbrace M([\sigma])\ |\ [\sigma]\in\ccomponents\right\rbrace.
		\end{gather*}
		A sequence $[\sigma_0],[\sigma_1],...,[\sigma_n]\in\ccomponents$ is a $\cM$-path if and only if there exists a sequence $\tau_0',\tau_1,\tau_1',\tau_2,\tau_2',...,\tau_{n-1}',\tau_n\in\cC$ such that
		\begin{itemize}[label=$\bullet$]
			\item $\tau_0'\in[\sigma_0]$, $\tau_i,\tau_i'\in[\sigma_i]$ for each $i=1,...,n-1$ and $\tau_n\in[\sigma_n]$;
			\item $\tau_{i-1}'\rightconnects{f}\tau_i$ for each $i=1,...,n$.
		\end{itemize}
		Moreover, $[\sigma_0],[\sigma_1],...,[\sigma_n]\in\ccomponents$ is a $\cM$-cycle iff there exists such a sequence $\tau_0',\tau_1,\tau_1',\tau_2,\tau_2',...,\tau_{n-1}',\tau_n\in\cC$ and $[\sigma_0]=[\sigma_n]$.
	\end{prop}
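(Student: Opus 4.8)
The plan is to obtain this statement as a direct specialization of Proposition~\ref{prop:MpathsCharacterization}, feeding it the particular Morse decomposition furnished by Proposition~\ref{prop:MinimalMorseDecompositionMDM} together with the partition of $\cC$ into critical components. First I would recall the setup: by Proposition~\ref{prop:MinimalMorseDecompositionMDM}, the collection $\cM_{\min} = \big\lbrace\,\{\tau\}\subseteq K\ |\ \tau\in\cC\big\rbrace$ is a Morse decomposition of the gradient flow $\Pi_f$, indexed by the set $\cC$ of critical points itself, with $M_\tau = \{\tau\}$ for each $\tau\in\cC$. By Definition~\ref{def:CriticalComponents}, the quotient $\ccomponents$ is exactly a partition of this index set $\cC$. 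Hence I am in position to apply Proposition~\ref{prop:MpathsCharacterization} with the substitutions $\P=\cC$, $\S=\ccomponents$, $M_\tau=\{\tau\}$, and $I_{[\sigma]}=[\sigma]$, so that $M(I_{[\sigma]})=M([\sigma])$.

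The core step is then to verify that the abstract conditions of Proposition~\ref{prop:MpathsCharacterization} translate verbatim into the asserted ones. Under the substitution above, an index $r\in\P$ becomes a critical simplex $\tau\in\cC$, so the auxiliary sequence $r_0',r_1,r_1',r_2,\dots,r_{n-1}',r_n\in\P$ becomes $\tau_0',\tau_1,\tau_1',\tau_2,\dots,\tau_{n-1}',\tau_n\in\cC$, and the membership conditions $r_0'\in I_{s_0}$, $r_i,r_i'\in I_{s_i}$, $r_n\in I_{s_n}$ become precisely $\tau_0'\in[\sigma_0]$, $\tau_i,\tau_i'\in[\sigma_i]$, and $\tau_n\in[\sigma_n]$. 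The only point requiring a word of justification is the connection condition: Proposition~\ref{prop:MpathsCharacterization} asks for $M_{r_{i-1}'}\rightconnects{\cV}M_{r_i}$, which here reads $\{\tau_{i-1}'\}\rightconnects{\cV}\{\tau_i\}$. Since for singletons the set-level relation $A\rightconnects{\cV}B$ reduces to the simplex-level relation $\tau_{i-1}'\rightconnects{\cV}\tau_i$, and since by definition $\rightconnects{f}$ is just $\rightconnects{\cV}$ for the gradient field $\cV$ of $f$, this is exactly $\tau_{i-1}'\rightconnects{f}\tau_i$. The $\cM$-cycle characterization follows in the same manner, appending the condition $s_0=s_n$, i.e.\ $[\sigma_0]=[\sigma_n]$, from the corresponding clause of Proposition~\ref{prop:MpathsCharacterization}.

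I do not anticipate any genuine obstacle here; the work is entirely a matter of bookkeeping the identification of index sets and recognizing the singleton reduction of the connection relation. The only care needed is to state explicitly that the index poset of $\cM_{\min}$ is $\cC$ and that $\ccomponents$ partitions it, so that the hypotheses of Proposition~\ref{prop:MpathsCharacterization} are literally met; once that is in place the conclusion is immediate.
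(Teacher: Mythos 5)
Your proposal is correct and is exactly the route the paper takes: the paper proves this proposition by observing that it ``follows directly from Proposition~\ref{prop:MpathsCharacterization}'' applied to the finest Morse decomposition $\big\lbrace\{\tau\}\ |\ \tau\in\cC\big\rbrace$ of Proposition~\ref{prop:MinimalMorseDecompositionMDM} with the partition of $\cC$ into critical components. Your write-up merely makes the index-set identifications and the singleton reduction of $\rightconnects{\cV}$ explicit, which the paper leaves implicit.
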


	This leads to the following definition.

	\begin{defn}[$f$-cycle]
		Let $f:K\rightarrow\R^\maxdim$ be \mdm. A \emph{$f$-cycle} is a sequence of critical components $[\sigma_0],[\sigma_1],...,[\sigma_n]=[\sigma_0]\in\ccomponents$ such that, for each $i=1,...,n$, there exists some $\tau'\sim\sigma_{i-1}$ and $\tau\sim\sigma_i$ such that $\tau'\rightconnects{f}\tau$. We say that $f$ is \emph{acyclic} if there exists no $f$-cycle.
	\end{defn}

	The next theorem is a direct consequence of Theorem~\ref{theo:ConditionsForMorseDecomposition} and Proposition~\ref{prop:fPathsCharacterization}.
	\begin{thm}\label{theo:MorseDecompositionfCycle}
		Let $f:K\rightarrow\R^\maxdim$ be \mdm. The collection
		\begin{gather*}
			\cM=\left\lbrace M([\sigma])\ |\ [\sigma]\in\ccomponents\right\rbrace.
		\end{gather*}
		is a Morse decomposition if and only if $f$ is acyclic.
	\end{thm}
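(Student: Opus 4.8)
The plan is to read the statement as the specialization of Theorem~\ref{theo:ConditionsForMorseDecomposition} in which the fine Morse decomposition is the finest one and the partition of its index set is given by the critical components. First I would invoke Proposition~\ref{prop:MinimalMorseDecompositionMDM}, which guarantees that $\cM_0 := \big\lbrace\{\sigma\}\ |\ \sigma\in\cC\big\rbrace$ is a Morse decomposition of $\Pi_f$ indexed by $\P := \cC$. Because $\sim$ is an equivalence relation by Proposition~\ref{prop:CriticalComponents}, the family $\ccomponents$ is a bona fide partition $\{I_s\ |\ s\in\S\}$ of $\P$, with $\S := \ccomponents$ and $I_{[\sigma]} := [\sigma]$. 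Unwinding Definition~\ref{def:MorseSet} with the singletons $M_\tau = \{\tau\}$ shows that the induced collection $\{M(I_s)\ |\ s\in\S\}$ is exactly $\cM = \big\lbrace M([\sigma])\ |\ [\sigma]\in\ccomponents\big\rbrace$; that is, $\cM$ is precisely the collection $\cM'$ of Theorem~\ref{theo:ConditionsForMorseDecomposition}.

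Next I would apply that theorem verbatim: the equivalence \ref{prop:ConditionsForMorseDecompositionEnumDecomp}~$\Leftrightarrow$~\ref{prop:ConditionsForMorseDecompositionEnumNoCycle} asserts that some partial order on $\S$ turns $\cM$ into a Morse decomposition if and only if $\Pi_f$ admits no nontrivial $\cM$-cycle. It then remains only to match ``no nontrivial $\cM$-cycle'' with ``$f$ is acyclic'', and this is furnished by Proposition~\ref{prop:fPathsCharacterization}: a $\cM$-cycle $[\sigma_0],\dots,[\sigma_n]$ with $[\sigma_0]=[\sigma_n]$ exists exactly when there are critical simplices $\tau_0',\tau_1,\tau_1',\dots,\tau_{n-1}',\tau_n\in\cC$ with $\tau_i,\tau_i'\sim\sigma_i$ and $\tau_{i-1}'\rightconnects{f}\tau_i$, which is precisely the data defining an $f$-cycle. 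Chaining the two equivalences then yields both directions of the biconditional simultaneously.

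The only delicate point, and hence the main obstacle, is the bookkeeping of triviality. Theorem~\ref{theo:ConditionsForMorseDecomposition} forbids only \emph{nontrivial} $\cM$-cycles, whereas a trivial one ($[\sigma_0]=\dots=[\sigma_n]$) corresponds under Proposition~\ref{prop:fPathsCharacterization} to connections realized inside a single component and never obstructs the partial order. I would therefore check explicitly that a genuine $f$-cycle visits at least two distinct components, so that its associated $\cM$-cycle is nontrivial, and conversely that any nontrivial $\cM$-cycle yields such an $f$-cycle; once this identification is secured, ``$f$ has no $f$-cycle'' reads off as ``$\Pi_f$ has no nontrivial $\cM$-cycle'', and the proof is complete.
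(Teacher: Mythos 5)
Your proposal is exactly the paper's argument: the paper states the theorem as a direct consequence of Theorem~\ref{theo:ConditionsForMorseDecomposition} (applied to the finest Morse decomposition from Proposition~\ref{prop:MinimalMorseDecompositionMDM}, partitioned by critical components) together with Proposition~\ref{prop:fPathsCharacterization}, which is precisely the chain of equivalences you describe. Your attention to the trivial-versus-nontrivial cycle bookkeeping is a point the paper glosses over entirely, and making it explicit is a genuine improvement rather than a deviation.
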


	Thus, when a \mdm function is acyclic, its critical components induce another set of Morse equation and inequalities. This follows from Proposition \ref{prop:MorseEquationMorseDecomposition} and Corollary \ref{coro:MorseInequalitiesMorseDecomposition}.

	\begin{thm}\label{theo:MorseInequalitiesCritComponents}
		Let $f:K\rightarrow\R^\maxdim$ be \mdm with $\dim K=n$ and
		\begin{gather*}
			m_p := \sum_{[\sigma]\in\ccomponents}\beta_p(M([\sigma]))
		\end{gather*}
		where $\beta_p(M([\sigma]))$ is the $p^\text{th}$ Conley coefficient of $M([\sigma])$. If $f$ is acyclic, then
		\begin{gather*}
			\sum_{p=0}^nm_pt^p = \sum_{p=0}^n\beta_p(K)t^p + (1+t)Q(t)
		\end{gather*}
		for some polynomial $Q(t)$ with non-negative coefficients. Thus, for all $p=0,1,...,n$, we have
		\begin{enumerate}
			\item strong Morse inequalities:
			\begin{gather*}
				m_p - m_{p-1} + \cdots + (-1)^p m_0 \geq \beta_p(K) - \beta_{p-1}(K) + \cdots (-1)^p \beta_0(K);
			\end{gather*}
			\item weak Morse inequalities:
			\begin{gather*}
				m_p \geq \beta_p(K);
			\end{gather*}
			\item an alternative expression for the Euler characteristic $\chi(K)$ of $K$:
			\begin{gather*}
				m_0 - m_1 + \cdots (-1)^n m_n = \beta_0(K) - \beta_1(K) + \cdots (-1)^n \beta_n(K) = \chi(K).
			\end{gather*}
		\end{enumerate}
	\end{thm}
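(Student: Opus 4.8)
The plan is to treat this statement as a direct corollary of the Morse decomposition established in the previous theorem, combined with the abstract Morse equation and inequalities already proved in Section~\ref{sec:CombinatorialDynamics}. Since $f$ is assumed acyclic, Theorem~\ref{theo:MorseDecompositionfCycle} guarantees that the collection $\cM = \left\lbrace M([\sigma])\ |\ [\sigma]\in\ccomponents\right\rbrace$ is a Morse decomposition of the gradient flow $\Pi_f$, indexed by $\ccomponents$ equipped with the partial order supplied through Theorem~\ref{theo:ConditionsForMorseDecomposition}. This is the only place where the acyclicity hypothesis enters, and it is precisely what makes the remaining machinery applicable.

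With the Morse decomposition in hand, I would apply Proposition~\ref{prop:MorseEquationMorseDecomposition} with index poset $\P := \ccomponents$, obtaining
\begin{gather*}
	\sum_{[\sigma]\in\ccomponents} P_{M([\sigma])}(t) = P_K(t) + (1+t)Q(t)
\end{gather*}
for some polynomial $Q(t)$ with non-negative coefficients. The only bookkeeping step is to rewrite both sides in terms of the quantities in the statement. By definition of the Conley polynomial, $P_{M([\sigma])}(t) = \sum_{p} \beta_p(M([\sigma]))\, t^p$, so interchanging the two finite sums gives
\begin{gather*}
	\sum_{[\sigma]\in\ccomponents} P_{M([\sigma])}(t) = \sum_{p=0}^n \left(\sum_{[\sigma]\in\ccomponents}\beta_p(M([\sigma]))\right) t^p = \sum_{p=0}^n m_p t^p.
\end{gather*}
On the right, since $K$ is itself a trivially isolated invariant set whose Conley index is the homology of $K$, we have $P_K(t) = \sum_{p=0}^n \beta_p(K)\, t^p$. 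Substituting both identities yields the claimed Morse equation.

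For the three numbered consequences I would simply invoke Corollary~\ref{coro:MorseInequalitiesMorseDecomposition} applied to this same decomposition $\cM$, whose quantity $m_p = \sum_{[\sigma]\in\ccomponents}\beta_p(M([\sigma]))$ coincides verbatim with the one defined here. Equivalently, they follow from the Morse equation directly: multiplying both sides by $(1+t)^{-1} = 1-t+t^2-\cdots$ and comparing coefficients gives the strong inequalities, the weak inequalities are their specialization, and substituting $t=-1$ annihilates the $(1+t)Q(t)$ term, producing the Euler characteristic identity.

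I do not anticipate a genuine obstacle at this stage: the entire difficulty has already been absorbed into Theorem~\ref{theo:MorseDecompositionfCycle} (which rests in turn on Theorem~\ref{theo:ConditionsForMorseDecomposition} and the $f$-cycle characterization of Proposition~\ref{prop:fPathsCharacterization}) and into the abstract results of Section~\ref{sec:CombinatorialDynamics}. The one point meriting a moment of care is confirming that the index set in Proposition~\ref{prop:MorseEquationMorseDecomposition} may legitimately be taken to be $\ccomponents$ rather than an abstract poset $\P$; this is immediate once the partial order from Theorem~\ref{theo:ConditionsForMorseDecomposition} is fixed, so the proof reduces to the notational identification of $\sum_p m_p t^p$ and $P_K(t)$ carried out above.
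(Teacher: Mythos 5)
Your proposal is correct and follows exactly the route the paper takes: the acyclicity hypothesis yields the Morse decomposition $\cM=\{M([\sigma])\ |\ [\sigma]\in\ccomponents\}$ via Theorem~\ref{theo:MorseDecompositionfCycle}, and the equation and inequalities then follow from Proposition~\ref{prop:MorseEquationMorseDecomposition} and Corollary~\ref{coro:MorseInequalitiesMorseDecomposition}. The bookkeeping you carry out (interchanging the finite sums and identifying $P_K(t)$ with the Poincar\'e polynomial of $K$) is precisely what the paper leaves implicit.
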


	Theorem \ref{theo:MorseInequalitiesCritComponents} is analogous to Theorem 6.2 in \citep{Wan1975}. In this work, the author studies the singularities of smooth functions $f:M\rightarrow\R^2$ on a manifold $M$ and, in order to prove the existence of Morse inequalities relating the homology of $M$ to the critical components of $f$, a \emph{no cycle property} needs to be introduced.

	This suggests the acyclicity of a vector-valued Morse function, whether it is discrete or smooth, is an essential property to establish Morse inequalities using the set of critical components of the function. Nonetheless, similar results could seemingly be found using the concept of persistence paths studied by \citet{Budney2021} without assuming acyclicity.

	\section{Conclusion and future work}

	The main achievement of this paper is the use of the framework of combinatorial vector fields and topological dynamics to provide a more complete definition of the multidimensional discrete Morse function (\mdm) and the study of its properties. Building on preliminary results in~\citep{Allili2019} and based on the new framework, a reformulation of the definition of the \mdm has allowed to establish key properties such as the handle decomposition and collapsing theorems and more importantly results on Morse inequalities and Morse decompositions. Moreover, a method of classification of critical cells of \mdm functions into critical components is proposed and conditions for obtaining Morse decompositions and Morse inequalities taking into account the critical components are specified.

	The results above suggest the possibility of undertaking future works in the following directions.

	First, even though we only defined the \mdm theory for simplicial complexes, it would be of interest to extend it to more general complexes or spaces. In particular, many of the results on combinatorial flows used in this article generalize to finite $T_0$-spaces \citep{Lipinski2022}, which are strongly related to simplicial complexes \citep{McCord1966, Stong1966}, thus suggesting the \mdm theory itself could be extended to those spaces.

	Furthermore, the notion of persistence paths, as studied by \citet{Budney2021}, could also be defined in our context and potentially lead to both theoretical and practical results. On the one hand, we could link the critical components of a \mdm function $f$ to the homology of its domain without presuming the acyclicity of $f$ as in Theorem \ref{theo:MorseInequalitiesCritComponents}. On the other hand, this could be used to find an alternative method to simplify the computation of multipersistent homology.

	Experimental results validating this approach are a work in progress.


\end{document}